\newtheorem*{corollary*}{Corollary}
\newtheorem{theorem}{Theorem}[section]
\newtheorem{corollary}[theorem]{Corollary}
\newtheorem{lemma}[theorem]{Lemma}
\newtheorem{proposition}[theorem]{Proposition}
\newtheorem{question}[theorem]{Question}
\newtheorem*{claim*}{Claim}
\newtheorem*{conjecture}{Conjecture}
\theoremstyle{definition}
\newtheorem{definition}[theorem]{Definition}
\newtheorem*{theorem*}{Theorem}
\newtheorem*{proposition*}{Proposition}
\newtheorem{example}[theorem]{Example}
\theoremstyle{remark}
\numberwithin{equation}{theorem}
\renewcommand*\env@matrix[1][\
arraystretch]{%
  \edef\arraystretch{#1}%
  \hskip -\arraycolsep
  \let\@ifnextchar\new@ifnextchar
  \array{*\c@MaxMatrixCols c}}
\begin{document}

\title{Finitistic Auslander algebras}
\date{\today}

\subjclass[2010]{Primary 16G10, 16E10}

\keywords{dominant dimension, Hopf algebras, higher Auslander algebras, Auslander-Gorenstein algebras, finitistic dimension}

\author{Ren\'{e} Marczinzik}
\address{Institute of algebra and number theory, University of Stuttgart, Pfaffenwaldring 57, 70569 Stuttgart, Germany}
\email{marczire@mathematik.uni-stuttgart.de}

\begin{abstract}
Recently, Chen and Koenig in \cite{CheKoe} and Iyama and Solberg in \cite{IyaSol} independently introduced and characterised algebras with dominant dimension coinciding with the Gorenstein dimension and both dimensions being larger than or equal to two. In \cite{IyaSol}, such algebras are named Auslander-Gorenstein algebras. Auslander-Gorenstein algebras generalise the well known class of higher Auslander algebras, where the dominant dimension additionally coincides with the global dimension. In this article we generalise Auslander-Gorenstein algebras further to algebras having the property that the dominant dimension coincides with the finitistic dimension and both dimension are at least two. We call such algebras finitistic Auslander algebras. As an application we can specialise to reobtain known results about Auslander-Gorenstein algebras and higher Auslander algebras such as the higher Auslander correspondence with a very short proof.
We then give several conjectures and classes of examples for finitistic Auslander algebras.
For a local Hopf algebra $A$ and an indecomposable non-projective $A$-module $M$, we show that $End_A(A \oplus M)$ is always a finitistic Auslander algebra of dominant dimension two. In particular this shows that $Ext_A^1(M,M)$ is always non-zero, which generalises a result of Tachikawa who proved that $Ext_A^1(M,M) \neq 0$ for indecomposable non-projective modules $M$ over group algebras of $p$-groups.
We furthermore conjecture that every algebra of dominant dimension at least two which has exactly one projective non-injective indecomposable module is a finitistic Auslander algebra. We prove this conjecture for a large class of algebras which includes all representation-finite algebras.
\end{abstract}

\maketitle
\section*{Introduction}
Let an algebra always be a finite dimensional connected algebra over a field $K$, which is not semi-simple. All modules are finite dimensional right modules if nothing is stated otherwise.
In this article we generalise Auslander-Gorenstein algebras introduced in \cite{IyaSol} as algebras having dominant dimension equal to the Gorenstein dimension and both dimensions being larger than or equal to two (in fact Iyama and Solberg also include selfinjective algebras in their definition of Auslander-Gorenstein algebras, but we do not include selfinjective algebras here as the developed theory for selfinjective algebras is trivial). Auslander-Gorenstein algebras contain the important class of higher Auslander algebras, introduced in \cite{Iya}. Recall that the finitistic dimension of an algebra is defined as the supremum of projective dimensions of all modules having finite projective dimension. It is a major open problem in the representation theory of finite dimensional algebras whether the finitistic dimension is always finite.
Note that in case an algebra is Gorenstein, the Gorenstein dimension equals the finitistic dimension (see for example \cite{Che}). Thus algebras with finitistic dimension equal to the dominant dimension, which is larger than or equal to two, generalise Auslander-Gorenstein algebras. We call such algebras \emph{finitistic Auslander algebras} and deduce some of their properties, including a generalisation of the celebrated higher Auslander correspondence for finite dimensional algebras first proven in \cite{Iya}. We characterise finitistic Auslander algebras in terms of Gorenstein homological algebra and the category of modules having a certain dominant dimension. Let $Dom_d(A)$ denote the full subcategory of modules having dominant dimension at least $d$, $Gp(A)$ the subcategory of Gorenstein projective modules and $Gp_{\infty}(A)$ the full subcategory of modules having infinite Gorenstein projective dimension, $Proj(A)$ the full subcategory of modules which are projective and $Proj_{\infty}(A)$ the full subcategory of modules having infinite projective dimension. We refer to the preliminaries for more information and definitions.
\begin{theorem*} 
Let $A \cong End_B(M)$ be an algebra of dominant dimension $d \geq 2$, where $M$ is a generator-cogenerator of $mod-B$.
The following are equivalent:
\begin{enumerate}
\item $A$ is a finitistic Auslander algebra.
\item $Dom_d(A) \subseteq Proj(A) \cup Proj_{\infty}(A)$.
\item $Dom_d(A) \subseteq Gp(A) \cup Gp_{\infty}(A)$.
\item $add(M)-resdim(X)= \infty$ for all $X \in M^{\perp d-2} \setminus add(M)$.
\end{enumerate}
\end{theorem*}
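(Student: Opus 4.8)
The plan is to transport every condition along the Morita--Tachikawa/M\"uller equivalence and then run a handful of dimension‑shifting arguments.

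\textit{The equivalence, and $(2)\Leftrightarrow(4)$.} Put $\Phi:=\Hom_B(M,-)$. Since $\operatorname{domdim}(A)=d$ --- equivalently $\Ext^i_B(M,M)=0$ for $1\le i\le d-2$ --- the functor $\Phi$ restricts to an equivalence $M^{\perp d-2}\xrightarrow{\ \sim\ }Dom_d(A)$ carrying $add(M)$ onto $Proj(A)$. Moreover, resolving $X\in M^{\perp d-2}$ by successive right $add(M)$‑approximations keeps all syzygies inside $M^{\perp d-2}$, makes the resulting $add(M)$‑resolution $\Hom_B(M,-)$‑exact, and hence turns it under $\Phi$ into a projective resolution of $\Phi(X)$; thus $\operatorname{pd}_A\Phi(X)=add(M)-resdim(X)$ (both infinite simultaneously), while $X\in add(M)$ iff $\Phi(X)\in Proj(A)$. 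So ``no object of $Dom_d(A)$ has projective dimension in $\{1,2,\dots\}$'' is $\Phi^{-1}$‑equivalent to: every $X\in M^{\perp d-2}\setminus add(M)$ has $add(M)-resdim(X)=\infty$. This is exactly $(2)\Leftrightarrow(4)$.

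\textit{$(1)\Leftrightarrow(2)$.} I use three routine facts: (i) in a short exact sequence $0\to U\to P\to V\to0$ with $P$ projective and $\operatorname{pd}_AU\ge1$ one has $\operatorname{pd}_AV=\operatorname{pd}_AU+1$ (and likewise with $\operatorname{Gpd}$ in place of $\operatorname{pd}$); (ii) $\operatorname{domdim}(\Omega^{-j}N)\ge\operatorname{domdim}(N)-j$; (iii) $\operatorname{domdim}(\Omega^{k}Z)\ge\min(\operatorname{domdim}(A),k)$ for every module $Z$ --- (ii) and (iii) being immediate from the behaviour of dominant dimension on short exact sequences recalled in the preliminaries. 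For $(1)\Rightarrow(2)$: given $N\in Dom_d(A)$ with $\operatorname{pd}_AN=p<\infty$, if $p\ge1$ then by (ii) each $\Omega^{-j}N$ for $0\le j\le d-1$ still has positive dominant dimension, hence embeds into its (projective‑injective) injective envelope, and iterating (i) gives $\operatorname{pd}_A(\Omega^{-d}N)=p+d>d$ while finite --- impossible when $\operatorname{findim}(A)=d$; so $p=0$, i.e.\ $N\in Proj(A)$. For $(2)\Rightarrow(1)$: first $\operatorname{findim}(A)\ge d$, because $\operatorname{domdim}(A)=d$ is finite forces $A$ not self‑injective, so in the minimal injective coresolution $0\to A\to I^0\to I^1\to\cdots$ the cokernel $Y_1$ of $A\hookrightarrow I^0$ is non‑projective (otherwise the sequence splits and $A$ is injective) with $\operatorname{pd}_AY_1=1$ (as $A,I^0$ are projective), and (i) applied along $0\to Y_j\to I^j\to Y_{j+1}\to0$ --- valid for $0\le j\le d-1$ since $I^0,\dots,I^{d-1}$ are projective‑injective --- yields $\operatorname{pd}_AY_j=j$, so $Y_d$ witnesses $\operatorname{findim}(A)\ge d$. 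Second, $\operatorname{findim}(A)\le d$: if some $Z$ had $d<\operatorname{pd}_AZ<\infty$, then by (iii) $\Omega^dZ\in Dom_d(A)$ with $1\le\operatorname{pd}_A(\Omega^dZ)=\operatorname{pd}_AZ-d<\infty$, contradicting $(2)$. Hence $\operatorname{findim}(A)=d$.

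\textit{$(3)\Leftrightarrow(1)$.} $(3)\Rightarrow(2)$: a module of $Dom_d(A)$ of finite projective dimension has finite Gorenstein projective dimension, hence by $(3)$ lies in $Gp(A)$, hence --- being of finite projective dimension --- is projective. $(1)\Rightarrow(3)$: every module of finite Gorenstein projective dimension has Gorenstein projective dimension $\le\operatorname{findim}(A)$ (standard Gorenstein homological algebra, cf.\ \cite{Che}), so under $(1)$ this bound is $d$; if some $N\in Dom_d(A)$ had $1\le\operatorname{Gpd}_AN<\infty$, the cosyzygy argument above --- verbatim with $\operatorname{Gpd}$ replacing $\operatorname{pd}$ in (i) --- would give $\operatorname{Gpd}_A(\Omega^{-d}N)=\operatorname{Gpd}_AN+d>d$ while finite, a contradiction; so $N\in Gp(A)\cup Gp_\infty(A)$. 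Combining the three steps, $(1),(2),(3),(4)$ are all equivalent.

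\textit{Main obstacle.} The one substantive point is the first step: pinning down the M\"uller equivalence together with the identity $\operatorname{pd}_A\Phi(X)=add(M)-resdim(X)$, i.e.\ checking that $add(M)$‑resolutions built from right $add(M)$‑approximations stay inside $M^{\perp d-2}$ and become $\Hom_B(M,-)$‑exact, so that projective dimension over $A$ is genuinely computed by $add(M)$‑resolution dimension. After that everything reduces to elementary dimension shifting; the only external inputs are the short‑exact‑sequence behaviour of dominant dimension (and the finiteness of $d$, which rules out $A$ self‑injective) and the inequality $\operatorname{Gpd}\le\operatorname{findim}$ for modules of finite Gorenstein projective dimension.
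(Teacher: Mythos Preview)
Your proof is correct and follows essentially the same route as the paper: the equivalence $Dom_d(A)\leftrightarrow M^{\perp d-2}$ together with the translation of $add(M)$-resolutions into projective resolutions for $(2)\Leftrightarrow(4)$, syzygy/cosyzygy dimension shifting for $(1)\Leftrightarrow(2)$, and the equality of the finitistic Gorenstein projective dimension with the finitistic dimension plus the same cosyzygy trick for $(1)\Leftrightarrow(3)$. The only cosmetic difference is that the paper first isolates the formula $\operatorname{findim}(A)=d+\sup\{\operatorname{pd}(N):N\in Dom_d(A),\ \operatorname{pd}(N)<\infty\}$ as a separate lemma (via the syzygy direction $\Omega^d$), whereas you argue both directions inline, using the cosyzygy direction $\Omega^{-d}$ for $(1)\Rightarrow(2)$.
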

Thus generator-cogenerators $M$ with $add(M)-resdim(X)= \infty$ for all $X \in M^{\perp d-2} \setminus add(M)$ generalise the classical cluster tilting objects introduced in \cite{Iya} and the precluster tilting objects introduced in \cite{IyaSol}.

In particular, specialising our results to finite Gorenstein or finite global dimension, we obtain quick proofs of some known facts such as the higher Auslander correspondence relating higher Auslander algebras and cluster tilting objects or the classification of Gorenstein projective modules over Auslander-Gorenstein algebras.

The rest of the articles is guided by conjectures that we motivate and prove in special cases.

\begin{conjecture} 
Let $A$ be a local selfinjective algebra and $M$ an indecomposable non-projective $A$-module.
Then $End_A(A \oplus M)$ is a finitistic Auslander algebra of dominant dimension 2.
\end{conjecture}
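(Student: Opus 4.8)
The plan is to deduce the statement from the Theorem above, applied with $B=A$ and $d=2$. As $A$ is local selfinjective, its unique indecomposable projective coincides with its unique indecomposable injective, so $A$ --- and hence $N:=A\oplus M$ --- is a generator-cogenerator of $\mod A$; by the standard theory of dominant dimension (Müller), $\Lambda:=\End_A(N)$ then has dominant dimension at least $2$, with equality exactly when $\Ext^1_A(N,N)\neq 0$, i.e.\ (since $A$ is projective-injective) exactly when $\Ext^1_A(M,M)\neq 0$. Since $d=2$, the orthogonality condition $M^{\perp d-2}$ in part (4) of the Theorem is empty, so that part reads: every $X\in\mod A$ with $X\notin\add(A\oplus M)$ has infinite $\add(A\oplus M)$-resolution dimension. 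Hence the statement splits into two tasks: (I) verifying this resolution-dimension condition, which via the Theorem yields that $\Lambda$ is a finitistic Auslander algebra; and (II) proving $\Ext^1_A(M,M)\neq 0$, which pins the dominant dimension down to $2$.

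For (I) I would pass through the classical equivalence $\Hom_A(A\oplus M,-)\colon\mod A\xrightarrow{\ \sim\ }Dom_2(\Lambda)$, under which $\add(A\oplus M)$ corresponds to $Proj(\Lambda)$, $\add(A\oplus M)$-resolutions to projective $\Lambda$-resolutions, and hence $\operatorname{pd}_\Lambda\Hom_A(A\oplus M,X)$ to the $\add(A\oplus M)$-resolution dimension of $X$; so (I) is exactly condition (2) of the Theorem. (Note that the single projective non-injective indecomposable of $\Lambda$ is $\Hom_A(A\oplus M,M)$, so (I) is also an instance of the general conjecture on such algebras.) I would argue by contradiction: from a minimal finite $\add(A\oplus M)$-resolution $0\to N_s\to\cdots\to N_0\to X\to 0$ of positive length with $X\notin\add(A\oplus M)$, injectivity of $A$ lets one strip the $\add(A)$-summands from the higher syzygies (a copy of $A$ inside $N_i$, $i\geq 1$, embeds into $N_{i-1}$, hence splits off, contradicting minimality), reducing to $N_1,\dots,N_s\in\add(M)$; then $0\to N_s\to N_{s-1}$ is a radical, $\Hom_A(M,-)$-exact monomorphism in $\add(M)$, and --- here the input $\Ext^1_A(M,M)\neq 0$ is needed, since otherwise the cosyzygy sequence $0\to M\to E(M)\to\Omega^{-1}M\to 0$ with $E(M)\in\add(A)$ would already be a length-one $\add(A\oplus M)$-resolution of $\Omega^{-1}M\notin\add(A\oplus M)$ --- one shows the next syzygy is again outside $\add(M)$, so the resolution cannot terminate. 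Dualising via $\Hom_A(-,A)$ transports the problem to $A^{\mathrm{op}}$ with the indecomposable non-projective module $\Hom_A(M,A)$, which is occasionally the handier side. I expect (I) to hold for every local selfinjective algebra once (II) is known.

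The genuine obstacle is (II): the non-vanishing of $\Ext^1_A(M,M)$ for an indecomposable non-projective module over a local selfinjective algebra. This is not known in general --- it belongs to the circle of the Tachikawa and Auslander--Reiten conjectures --- which is why the statement is only conjectured here, the provable version replacing ``selfinjective'' by ``Hopf''. For a local Hopf algebra $A$ the Hopf structure supplies it: the tensor identity, together with the fact that tensoring a projective with a finite-dimensional module stays projective, gives $\Ext^1_A(M,M)\cong\Ext^1_A(K,\Hom_K(M,M))$, and $\Hom_K(M,M)$ carries the trivial module $K$ both as a submodule (the identity endomorphism) and as a quotient (the trace); since $\Ext^1_A(K,K)\neq 0$ for the non-semisimple local algebra $A$, a short diagram chase with the long exact sequence gives $\Ext^1_A(M,M)\neq 0$ (and when $p\nmid\dim_K M$ one may even split $K$ off $\Hom_K(M,M)$ and conclude at once). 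This recovers Tachikawa's theorem in the case $A=KG$ with $G$ a $p$-group. Granting (II), part (I) completes the proof; without it, the conjecture is open precisely to the extent that the non-vanishing of $\Ext^1_A(M,M)$ is.
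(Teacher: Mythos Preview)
Your overall plan matches the paper's: the conjecture is reduced to the non-vanishing $\Ext^1_A(M,M)\neq 0$, which is open for arbitrary local selfinjective $A$ (this is why the statement is a conjecture) and is provable for local Hopf algebras. The paper states this reduction explicitly after Theorem~\ref{Ricktheo}. However, your arguments for both (I) and (II) have genuine gaps.

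For (I) you correctly arrive at a radical monomorphism $\alpha\colon N_s\hookrightarrow N_{s-1}$ between objects of $\add(M)$ such that $\Ext^1_A(M,-)$ carries it to an injection, but ``one shows the next syzygy is again outside $\add(M)$, so the resolution cannot terminate'' is not an argument: there is no further syzygy to produce, and you give no reason why such an $\alpha$ cannot exist when $\Ext^1_A(M,M)\neq 0$. The missing step, isolated in the paper as a lemma of Rickard, is: for any additive functor $F$ and any map $\alpha\colon M^m\to M^n$ whose components all lie in the radical of $\End_A(M)$, if $F(\alpha)$ is injective then $F(M)=0$. One pads so that $n=dm$, then composes the direct-sum maps $M^{d^i m}\to M^{d^{i+1}m}$ built from $\alpha$ more times than the Loewy length of $\End_A(M)$; the composite is zero, yet $F$ of it is still injective, forcing $F(M)=0$. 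Taking $F=\Ext^1_A(M,-)$ yields $\Ext^1_A(M,M)=0$, the desired contradiction. This Loewy-length trick is the heart of (I) and is entirely absent from your sketch.

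For (II) in the Hopf case, the adjunction $\Ext^1_A(M,M)\cong\Ext^1_A(K,W)$ with $W=\Hom_K(M,M)$ is correct, but the fact that $K$ occurs as a submodule and as a quotient of $W$ does not by itself force $\Ext^1_A(K,W)\neq 0$ via any long-exact-sequence chase: if $W$ were injective this Ext group would vanish regardless of how $K$ sits inside $W$, so the two copies of $K$ carry no information on their own. Your splitting argument when $p\nmid\dim_K M$ is fine, but that is only a special case. The paper's route is cleaner and uniform: since $K$ is the unique simple over the local selfinjective algebra $A$, one has $\Ext^1_A(K,W)=0$ if and only if $W$ is injective, equivalently projective; and the Hopf-algebra fact (Theorem~\ref{hopflemmas}(3)) that $W\cong M^*\otimes_K M$ is projective if and only if $M$ is projective then finishes immediately. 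That last equivalence is precisely what your ``short diagram chase'' is trying to circumvent, and it cannot be avoided.
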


We prove this conjecture in a special case:
\begin{theorem*}
Let $A$ be a local Hopf algebra and $M$ an indecomposable non-projective $A$-module.
Then $End_A(A \oplus M)$ is a finitistic Auslander algebra of dominant dimension 2.
\end{theorem*}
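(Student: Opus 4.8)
\emph{Plan.} The idea is to place $B:=\End_A(A\oplus M)$ inside the Morita--Tachikawa correspondence and then apply the characterization theorem above, feeding in two Hopf-algebraic facts about $A$: the tensor identity ($P\otimes Y\in\add A$ whenever $P$ is projective, in particular $A\otimes Y$ is free) and the fact that, because $A$ is \emph{local}, its unique simple module $K$ generates the stable module category $\ul{\mod}A$ as a thick subcategory. Since a finite-dimensional Hopf algebra is Frobenius, $A$ is self-injective, so $N:=A\oplus M$ is a generator-cogenerator of $\mod A$; hence $\operatorname{domdim}B\ge 2$, and $\Hom_A(N,-)$ restricts to an equivalence $\mod A\xrightarrow{\sim}\operatorname{Dom}_2(B)$ carrying $\add N$ onto the projective $B$-modules and carrying the relative $\add N$-resolution dimension of an $A$-module $X$ onto $\operatorname{pd}_B\Hom_A(N,X)$. (Note that the only indecomposable projective non-injective $B$-module is $\Hom_A(N,M)$, so this is the conjectured situation specialised to local Hopf algebras.) By M\"uller's criterion $\operatorname{domdim}B\ge 3$ would force $\Ext^1_A(M,M)=0$, and the characterization theorem for $d=2$ says $B$ is a finitistic Auslander algebra iff every $A$-module of finite relative $\add N$-resolution dimension lies in $\add N$. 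So the statement reduces to (a) $\Ext^1_A(M,M)\ne 0$ and (b) this last closure property.

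For (a): as $A$ is local and not semisimple we have $\operatorname{char}K=p>0$ and $\Ext^1_A(K,K)\ne 0$. Using $\Ext^1_A(M,M)\cong\Ext^1_A(K,\Hom_K(M,M))$ together with the fact that $\Hom_K(M,M)=M^*\otimes M$ is non-projective (because $M$ is a direct summand of $M\otimes M^*\otimes M$ and tensoring a projective with any module gives a projective), and that $\ul{\mod}A=\operatorname{thick}(\ul K)$, one knows that $\widehat{\Ext}^{\,*}_A(K,M^*\otimes M)\ne 0$; one then has to locate a nonzero class in degree one. When $p\nmid\dim_K M$ the coevaluation and evaluation maps $K\to M^*\otimes M\to K$ compose to $\dim_K M$, so $K$ splits off $M^*\otimes M$ and the claim follows from $\Ext^1_A(K,K)\ne 0$; when $p\mid\dim_K M$ one argues with the short exact sequence $0\to K\!\cdot\!\mathrm{id}_M\to M^*\otimes M\to Q\to 0$ and with the module structure of $\widehat H^{*}(A,M^*\otimes M)$ over $\widehat H^{*}(A,K)$ to pin the class down in degree one. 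This gives $\operatorname{domdim}B=2$ (and, in particular, the asserted generalisation of Tachikawa's theorem).

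For (b): an induction on the resolution length reduces the closure property to: if $0\to N_1\to N_0\to X\to 0$ is exact with $N_1\in\add N$ and $N_0\to X$ a minimal right $\add N$-approximation, then $X\in\add N$. Cancelling the injective summands of $N_1$ — they split off as direct summands of $N_0$, and the approximation property is preserved — one may take $N_1\cong M^{b}$, so in $\ul{\mod}A$ one has $\ul X\cong\operatorname{cone}(\ul M^{b}\to\ul M^{t})$. Applying $\Hom_A(M,-)$ to the sequence and using $\Ext^{\ge 1}_A(M,A)=0$ (since $A$ is injective) together with the approximation property, one extracts vanishing which, combined once more with the tensor identity and with the fact that non-projectivity of $A$-modules is detected by Tate cohomology against $K$, forces this cone into $\add\ul M$, whence $X\cong M^{c}\oplus(\text{projective})\in\add N$. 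The main obstacle is exactly this Hopf-algebraic input: over an arbitrary self-injective algebra one easily builds modules outside $\add N$ having finite \emph{ordinary} $\add N$-resolution dimension, so one genuinely needs that the trivial module generates the stable category of a local Hopf algebra in order to rule these out for the relative (approximation) resolutions that govern $\operatorname{pd}_B$; and, inside the proof of (a), isolating the nonzero Tate class in the single degree one rather than in an unspecified positive degree is the delicate technical point.
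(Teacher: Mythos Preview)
Your overall plan (reduce to $\Ext^1_A(M,M)\ne 0$ plus ``every $X\notin\add(A\oplus M)$ has infinite $\add(A\oplus M)$-resolution dimension'') matches the paper exactly, but both halves of your execution have gaps, and you have misdiagnosed where the Hopf structure is actually needed.

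\textbf{Part (a).} You correctly reach $\Ext^1_A(M,M)\cong\Ext^1_A(K,M^\ast\otimes_K M)$ and correctly note that $M^\ast\otimes_K M$ is non-projective. At this point you are done: for a \emph{local} self-injective algebra $A$ with unique simple $K$, one has $\Ext^1_A(K,N)=0$ if and only if $N$ is injective, hence projective. (If $N$ is not injective, the inclusion $N\hookrightarrow I(N)$ is proper and its cokernel has $K$ in its socle, producing a nonsplit extension.) There is no ``delicate technical point'' of locating a Tate class in degree one, and no need for a case split on $p\mid\dim_K M$; your second case is in fact not proved. This is exactly the paper's Proposition on the equivalence $\Ext^1_A(X,Y)=0\iff\Hom_K(X,Y)$ projective.

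\textbf{Part (b).} Your sketch (``vanishing which, combined once more with the tensor identity \ldots forces this cone into $\add\ul M$'') is not an argument; no mechanism is given that actually pins $X$ down. More importantly, your diagnosis is off: the Hopf structure is \emph{not} used here at all. The paper proves (b) for any local self-injective $A$ with $\Ext^1_A(M,M)\ne 0$, via the following lemma of Rickard: if $\alpha\colon M^m\to M^n$ has only radical components and $F$ is an additive functor with $F(\alpha)$ injective, then $F(M)=0$ (pad so $n=dm$, then the composite $M^m\to M^{dm}\to\cdots\to M^{d^k m}$ of diagonal copies of $\alpha$ is zero once $k$ exceeds the Loewy length of $\End_A(M)$, while $F$ of it stays injective). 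Now take a minimal $\add(A\oplus M)$-approximation $0\to N_1\to N_0\to U\to 0$; minimality forces $N_1\in\add M$, and the approximation property makes $\Ext^1_A(M,N_1)\to\Ext^1_A(M,N_0)$ injective. Strip the free summands from $N_0$ to get a radical-component map between sums of $M$, and apply the lemma with $F=\Ext^1_A(M,-)$ to conclude $\Ext^1_A(M,M)=0$, a contradiction. Thus the only place the Hopf hypothesis enters is to secure $\Ext^1_A(M,M)\ne 0$; your statement that ``one genuinely needs that the trivial module generates the stable category of a local Hopf algebra'' for (b) is incorrect (and in any case $\ul{\mod}A=\operatorname{thick}(\ul K)$ holds for every local self-injective algebra, not just Hopf algebras).
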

This theorem implies that $Ext_A^1(M,M) \neq 0$ for $M$ as in the theorem. This generalises an old theorem of Tachikawa who proved this for group algebras of $p$-groups. We also include an example of a local Hopf algebra that is not isomorphic to a group algebra of a $p$-group to show that we give a proper generalisation of the theorem of Tachikawa.
We then give some other examples that motivate this conjecture on local selfinjective algebras and also show that in general the class of finitistic Auslander algebras is much larger than the class of Auslander-Gorenstein algebras. For example we show that for a local selfinjective algebra $A$ with simple module $S$, the algebra $B=End_A(A \oplus S)$ is a standardly stratified finitistic Auslander algebra of dominant dimension 2. This algebra $B$ is an Auslander-Gorenstein algebra iff $A \cong K[x]/(x^n)$ for some $n \geq 2$ and a higher Auslander algebra iff $A \cong K[x]/(x^2)$.

For some of our theory we can include algebras that have finitistic dimension equal to the dominant dimension, even when the dominant dimension is equal to one. We call an algebra with finitistic dimension equal to the non-zero dominant dimension \emph{weak finitistic Auslander algebras}.
Our next conjecture is related to the finitistic dimension conjecture as we will see later. The truth of the following conjecture would give a large and easy construction of finitistic Auslander algebras.

\begin{conjecture}
Let $A$ be an algebra of dominant dimension at least one that has exactly one indecomposable projective non-injective $A$-module. Then $A$ is a weak finitistic Auslander algebra.
\end{conjecture}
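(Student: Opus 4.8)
The plan is to prove $\operatorname{findim}(A)=d$, where $d:=\operatorname{domdim}(A)$, after first disposing of $d=\infty$. Write $P$ for the unique indecomposable projective non-injective $A$-module; since $P$ exists, $A$ is not self-injective. If $d=\infty$, the minimal injective coresolution $0\to A\to I^0\to I^1\to\cdots$ consists of projective-injective modules, so $0\to A\to I^0\to I^0/A\to 0$ is non-split and $I^0/A$ is non-projective; an induction along the cosyzygies $C_j:=\Omega^{-j}(A)$ (each nonzero, as $\operatorname{id}(A)=\infty$) gives $\operatorname{pd}(C_j)=j$ for all $j$, whence $\operatorname{findim}(A)=\infty=d$. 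So assume $1\le d<\infty$. The same device supplies the lower bound: truncating yields $0\to A\to I^0\to\cdots\to I^{d-1}\to C_d\to 0$ with projective-injective middle terms and $C_d\neq 0$ (else $\operatorname{id}(A)<d$), so $\operatorname{pd}(C_d)\le d$, with equality because $C_1=I^0/A$ is non-projective; hence $\operatorname{findim}(A)\ge d$, and only $\operatorname{findim}(A)\le d$ remains.

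The elementary crux is that, $P$ being the \emph{unique} indecomposable projective non-injective module, every $X$ with $\operatorname{pd}(X)=n<\infty$ satisfies $\Omega^n(X)\in\add(P)$: in a minimal projective resolution $0\to P_n\to\cdots\to P_0\to X\to 0$, an injective summand of $P_n=\Omega^n(X)$ would, via $\Omega^n(X)\hookrightarrow P_{n-1}$, split off $P_{n-1}$, contradicting $\Omega^n(X)\subseteq\rad(P_{n-1})$. For $d\ge 2$ I would then use the characterisation theorem above: as $\operatorname{domdim}(A)\ge 2$, write $A\cong\End_B(M)$ with $M$ a generator-cogenerator of $\operatorname{mod}\text{-}B$; the hypothesis on $A$ forces $M=DB\oplus N$ with a single indecomposable non-injective summand $N$, and $B$ has exactly one indecomposable projective non-injective module while having strictly fewer simple modules than $A$. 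By condition (2) of the Theorem it suffices to show that every $Y\in Dom_d(A)$ of finite projective dimension is projective. If $\operatorname{pd}(Y)=n\ge 1$ with $\operatorname{domdim}(Y)\ge d$, then $\Omega^n(Y)\in\add(P)$ displays $P$ as an $n$-th syzygy, which should clash with the $d$ projective-injective terms at the start of the minimal injective coresolution of $Y$.

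Extracting that clash is the part I expect to be hard — in full generality it looks as difficult as the finitistic dimension conjecture, since proving $\operatorname{findim}(A)\le d$ entails $\operatorname{findim}(A)<\infty$. For a large class, including all representation-finite algebras, I would instead take $\operatorname{findim}(A)<\infty$ as known (classical in the representation-finite case) and induct on the number of simple modules. Choosing $X$ with $\operatorname{pd}(X)=\operatorname{findim}(A)$, if $\operatorname{findim}(A)>d$ then $\Omega^{\operatorname{findim}(A)-1}(X)$ is a module of projective dimension $1$ — so with first syzygy in $\add(P)$ — that is simultaneously a $d$-th syzygy; carrying it through the equivalence $\Hom_B(M,-)\colon\operatorname{mod}\text{-}B\xrightarrow{\ \sim\ }Dom_2(A)$, under which $\add(M)$ corresponds to $Proj(A)$, turns this into a statement on the $\add(M)$-resolution dimension of the corresponding $B$-module, where the inductive hypothesis for $B$ applies (since $B$ is again representation-finite, has exactly one indecomposable projective non-injective module, and has strictly fewer simples), the base cases and any self-injective $B$ being covered by the earlier results of the article. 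Finally $d=1$, where no double-centraliser description is available, I would treat separately by an argument still based on the observation above, showing that uniqueness of $P$ and $\operatorname{domdim}(A)=1$ preclude a non-projective syzygy of projective dimension $1$, so that $\operatorname{findim}(A)\le 1$.
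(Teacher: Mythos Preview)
This statement is a \emph{conjecture} in the paper, not a theorem; the paper does not prove it in full generality, and you yourself note that the upper bound $\operatorname{findim}(A)\le d$ ``looks as difficult as the finitistic dimension conjecture''. So there is no ``paper's own proof'' to compare against for the general statement. What the paper does prove is the special case where $A$ is tilting-finitistic (which includes the representation-finite case), and here your approach and the paper's diverge substantially.

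The paper's argument is short and structural: it shows (Proposition~\ref{propocotilting}) that a NGAS algebra of dominant dimension $d$ has exactly $d+1$ basic tilting modules, namely the shifted tilting modules $eA\oplus\Omega^{-i}(A)$ for $0\le i\le d$, by identifying $eA$ as an almost complete cotilting module and invoking the Buan--Solberg count of complements via faithful dimension. Since each shifted tilting module has projective dimension at most $d$, any tilting-finitistic NGAS algebra satisfies $\operatorname{findim}(A)=d$. Representation-finiteness enters only through the implication ``$P^{<\infty}$ contravariantly finite $\Rightarrow$ tilting-finitistic'' (Angeleri-H\"ugel--Trlifaj).

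Your inductive route via $B=eAe$ is genuinely different, but as written it has real gaps even in the representation-finite case. First, you assert that $B$ again has exactly one indecomposable projective non-injective module (true, by a summand count) \emph{and} that the inductive hypothesis applies---but you never verify $\operatorname{domdim}(B)\ge 1$, which is part of the NGAS hypothesis and not obvious. Second, and more seriously, the inductive step is not completed: you pass from a module $Y\in Dom_d(A)$ of projective dimension $1$ to a $B$-module $Z$ of $\operatorname{add}(M)$-resolution dimension $1$, but you do not explain how the inductive statement $\operatorname{findim}(B)=\operatorname{domdim}(B)$ bears on $\operatorname{add}(M)$-resolution dimensions; these are resolutions by $M$, not projective resolutions over $B$, and the two notions are unrelated without further argument. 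Third, the $d=1$ case is left entirely unaddressed. The paper's tilting-theoretic argument avoids all of these issues and handles $d=1$ uniformly.
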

Note that the class of algebras with dominant dimension at least one that have at exactly one indecomposable projective non-injective $A$-module is very large, for example it generalises the class of proper almost selfinjective algebras from \cite{FHK}.
Let $P^{< \infty}(A)$ denote the full subcategory of modules having finite projective dimension. Note that in a representation-finite algebra all subcategories are contravariantly finite and thus the next theorem applies to all representation-finite algebras.
\begin{theorem*}
Let $A$ be a finite dimensional algebra such that $P^{< \infty}(A)$ is contravariantly-finite. Assume furthermore that $A$ has dominant dimension at least one and exactly one indecomposable projective non-injective module. Then $A$ is a weak finitistic Auslander algebra.

\end{theorem*}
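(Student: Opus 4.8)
The plan is to prove the two inequalities $\operatorname{findim}(A)\ge\operatorname{domdim}(A)$ and $\operatorname{findim}(A)\le\operatorname{domdim}(A)$, the second being the substantial one. Write $d=\operatorname{domdim}(A)$, let $P$ be the unique indecomposable projective non-injective $A$-module and $Q$ the basic projective--injective $A$-module (which is faithful since $d\ge 1$); by the usual count of indecomposables there is also a unique indecomposable injective non-projective module $\bar I$. Since $P^{<\infty}(A)$ is contravariantly finite, Auslander--Reiten gives $\operatorname{findim}(A)<\infty$, and this will force $d<\infty$ via the lower bound. For that lower bound, note that every indecomposable projective other than $P$ is injective, hence of infinite dominant dimension, so $d=\operatorname{domdim}(P)$; in the minimal injective coresolution $0\to P\to I^{0}\to I^{1}\to\cdots$ the terms $I^{0},\dots,I^{d-1}$ are projective, so with $X_{0}=P$ and $X_{j}=\operatorname{coker}(I^{j-2}\to I^{j-1})$ each sequence $0\to X_{j-1}\to I^{j-1}\to X_{j}\to 0$, $1\le j\le d$, has projective middle term. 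A Schanuel argument then shows inductively that $\operatorname{pd}(X_{j})=j$ for $0\le j\le d$: $\Omega(X_{j})$ and $X_{j-1}$ differ by a projective summand, and $X_{j-1}$ is non-projective once $j-1\ge 1$, since otherwise it would split off the injective $I^{j-1}$ and force the non-injective module $P$ to be injective. Hence $\operatorname{findim}(A)\ge\operatorname{pd}(X_{d})=d$, and $d<\infty$.

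The key structural input, which is where the uniqueness of $P$ enters, is: if $\operatorname{pd}(M)=m\ge 1$ then in a minimal projective resolution $\cdots\to P_{1}\to P_{0}\to M\to 0$ the last term $\Omega^{m}(M)=P_{m}$ lies in $\add(P)$, because a projective--injective summand of $\Omega^{m}(M)$ would be an injective submodule of $P_{m-1}$, hence a direct summand on which the differential restricts isomorphically, contradicting minimality; dually, if $\operatorname{id}(N)=n\ge 1$ then $\Omega^{-n}(N)\in\add(\bar I)$. (A companion fact, proved the same way, is that $\operatorname{pd}(M)=m<\infty$ implies $\Ext^{m}_{A}(M,A)\ne 0$, since a map from the projective $\Omega^{m}(M)$ onto one of its summand tops cannot factor through $\rad(P_{m-1})$; this already yields $\operatorname{findim}(A)\le\operatorname{id}(A_{A})$, though by itself it is not enough.)

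For the upper bound $\operatorname{findim}(A)\le d$ I would distinguish two cases. If $d\ge 2$ then $A\cong\End_{B}(M)$ for a generator--cogenerator $M$ of $\operatorname{mod}B$, with $B=eAe$ for $e$ the idempotent cutting out $Q$, so by the characterization theorem it suffices to verify condition $(2)$, i.e.\ that $Dom_{d}(A)\subseteq Proj(A)\cup Proj_{\infty}(A)$: every module $X$ with $\operatorname{domdim}(X)\ge d$ and $\operatorname{pd}(X)<\infty$ is projective. Assuming $\operatorname{pd}(X)=m\ge 1$, one passes down the injective coresolution of $X$: since $\operatorname{domdim}(X)\ge d$, the cosyzygy $\Omega^{-(d-1)}(X)$ is a submodule of a projective--injective module and has projective dimension $m+d-1$, while by the structural input its top syzygy again lies in $\add(P)$; reconciling this with the finiteness of $\operatorname{findim}(A)$ coming from contravariant finiteness of $P^{<\infty}(A)$ — equivalently with the Auslander--Reiten relative cotilting module $T$, whose only non-projective indecomposable summand is a $P^{<\infty}(A)$-approximation of $\bar I$ — should force $m=0$. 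Specialised to the Gorenstein case this is the assertion that a Gorenstein algebra with exactly one indecomposable projective non-injective module is Auslander--Gorenstein. If $d=1$, the same circle of ideas shows directly that every module of finite projective dimension has projective dimension at most one. The main obstacle is exactly this verification: showing that a module of large dominant dimension and finite projective dimension is incompatible with $\operatorname{findim}(A)$ being finite, which is where the hypothesis of contravariant finiteness of $P^{<\infty}(A)$ is used essentially — unsurprisingly, since the statement is linked to the finitistic dimension conjecture — through the Auslander--Reiten tilting module and the behaviour of the (co)syzygies of the distinguished modules $P$ and $\bar I$. Granting it, the two inequalities combine to give $\operatorname{findim}(A)=d=\operatorname{domdim}(A)\ne 0$, so that $A$ is a weak finitistic Auslander algebra.
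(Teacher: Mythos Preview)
Your lower bound $\operatorname{findim}(A)\ge d$ and the structural remarks about $\Omega^{m}(X)\in\add(P)$ are fine, but the upper bound is not proved: the sentence ``reconciling this with the finiteness of $\operatorname{findim}(A)$ \ldots\ should force $m=0$'' is precisely the missing argument. Knowing that $\operatorname{findim}(A)$ is finite does not, by itself, rule out the existence of a non-projective $X\in Dom_{d}(A)$ with $\operatorname{pd}(X)<\infty$; it only tells you such an $X$ would have bounded projective dimension. You gesture at the Auslander--Reiten cotilting module but never explain what property of it yields the bound, and the observation $\Omega^{m}(X)\in\add(P)$ is not exploited either. As written the argument is a sketch with its load-bearing step left open.

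The paper closes this gap by a genuinely different route, via tilting theory. It first shows (Proposition~\ref{propocotilting}) that a NASG algebra of dominant dimension $d$ has \emph{exactly} $d+1$ basic tilting modules, namely the shifted tilting modules $eA\oplus\Omega^{-i}(A)$ for $0\le i\le d$, each of projective dimension $i$; the proof uses the Buan--Solberg count of complements of an almost complete cotilting module in terms of faithful dimension. It then invokes the result of Angeleri-H\"ugel and Trlifaj \cite{AT} that when $P^{<\infty}(A)$ is contravariantly finite the finitistic dimension equals the supremum of projective dimensions of tilting modules. Combining these two facts immediately gives $\operatorname{findim}(A)=d$. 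If you want to salvage your approach, this is the mechanism you need: contravariant finiteness of $P^{<\infty}(A)$ enters not merely through finiteness of $\operatorname{findim}(A)$ but through the tilting-theoretic description of the finitistic dimension.
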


The examples and results in this article motivate the following conjecture:
\begin{conjecture}
Let $n \geq 2$.
There exists a polynomial function $f(n)$ such that the following is true: \newline
Every connected non-selfinjective algebra with $n$ simple modules that has dominant dimension at least $f(n)$ is a finitistic Auslander algebra.
\end{conjecture}
We think that one might choose a polynomial function with $n \leq f(n) \leq 2n$ for each $n \geq 2$. In particular, the author is not aware of a non-selfinjective algebra with dominant dimension $\geq 2$ and having two simple modules that is not a finitistic Auslander algebra.
The author thanks Jeremy Rickard for allowing him to use the theorem \ref{Ricktheo} in this article. This answered a question of the author raised in mathoverflow, see http://mathoverflow.net/questions/257744/finite-addn-resolution. The author thanks Matthew Pressland for helpful discussions on shifted tilting modules that lead to an improvement of \ref{propocotilting}. The author thanks Xingting Wang for suggesting the example in \ref{hopfexample} and he thanks Zhao Tiwei for useful comments.
Many results in this article were tested with the GAP-package QPA and the author is thankful to the QPA-team for their work, see \cite{QPA}.
\section{Preliminaries}
Throughout $A$ is a finite dimensional and connected algebra over a field $K$. Furthermore, we assume that $A$ is not semisimple. We always work with finite dimensional right modules, if not stated otherwise. By $mod-A$, we denote the category of finite dimensional right $A$-modules. 
For background on representation theory of finite dimensional algebras and their homological algebra, we refer to \cite{ASS} or \cite{SkoYam}.
For a module $M$, $add(M)$ denotes the full subcategory of $mod-A$ consisting of direct summands of $M^n$ for some $n \geq 1$.
A module $M$ is called \emph{basic} in case $M \cong M_1 \oplus M_2 \oplus ... \oplus M_n$, where every $M_i$ is indecomposable and $M_i$ is not isomorphic to $M_j$ for $i \neq j$. The \emph{basic version} of a module $N$ is the unique (up to isomorphim) module $M$ such that $add(M)=add(N)$ and such that $M$ is basic. An algebra is called basic in case the regular module is basic.
We denote by $S_i=e_iA/e_iJ$, $P_i=e_i A$ and $I_i=D(Ae_i)$  the simple, indecomposable projective and indecomposable injective module, respectively, corresponding to the primitive idempotent $e_i$. \newline
The \emph{dominant dimension} domdim($M$) of a module $M$ with a minimal injective resolution \newline $(I_i): 0 \rightarrow M \rightarrow I_0 \rightarrow I_1 \rightarrow ...$ is defined as: \newline
domdim($M$):=$\sup \{ n | I_i $ is projective for $i=0,1,...,n \}$+1, if $I_0$ is projective, and \newline domdim($M$):=0, if $I_0$ is not projective. \newline
The \emph{codominant dimension} of a module $M$ is defined as the dominant dimension of the $A^{op}$-module $D(M)$.
The dominant dimension of a finite dimensional algebra is defined as the dominant dimension of the regular module. It can be shown that the dominant dimension of an algebra always equals the dominant dimension of the opposite algebra, see for example \cite{Ta}.
So domdim($A$)$ \geq 1$ means that the injective hull of the regular module $A$ is projective or equivalently, that there exists an idempotent $e$ such that $eA$ is a minimal faithful projective-injective module.
Algebras with dominant dimension larger than or equal to 1 are called QF-3 algebras.
For more information on dominant dimensions and QF-3 algebras, we refer to \cite{Ta}.
An algebra $A$ is called \emph{Gorenstein} in case $Gdim(A):=injdim(A)$ equals $projdim(D(A))< \infty$. In this case $Gdim(A)$ is called the \emph{Gorenstein dimension} of $A$ and we say that $A$ has infinite Gorenstein dimension if $injdim(A)= \infty$ or $projdim(D(A))= \infty$. Note that $Gdim(A)= max \{ injdim(e_iA) | e_i$ a primitive idempotent$ \}$ and $domdim(A)= min \{ domdim(e_iA) | e_i $ a primitive idempotent $ \}$.
We denote by $Proj(A)$ the full subcategory of projective modules and by $Proj_{\infty}(A)$ the full subcategory of modules of infinite projective dimension. $Dom_d(A)$ denotes the full subcategory of modules having dominant dimension at least $d$.
The Morita-Tachikawa correspondence (see for example \cite{Ta}) says that an algebra $A$ has dominant dimension at least two iff $A \cong End_B(M)$ for some generator-cogenerator $M$ of $mod-B$ and some algebra $B$ that is then isomorphic to $eAe$, when $eA$ is a minimal faithful projective-injective $A$-module. Mueller's theorem says that in this case the dominant dimension of $A$ equals $\inf \{ i \geq 1 | Ext_B^{i}(M,M) \neq 0 \} +1$, see \cite{Mue}.
We will need the following results that can be viewed as refinements of results of Mueller. The theorem can be found in \cite{Mar} as theorem 2.2. with detailed references to the article \cite{APT}.
\begin{theorem}
\label{ARSmaintheorem}
Let $A$ be an algebra of dominant dimension at least two with minimal faithful projective-injective left module $P$ and minimal faithful projective-injective right module $I$. Let $B=End_A(P)$. We have $B \cong End_A(I)$.
\begin{enumerate}
\item $F:=Hom_A(P,-) : Dom_2(A) \rightarrow mod-B$ is an equivalence of categories.
$F$ restricts to an equivalence between add($I$) and the category of injective $B$-modules.
\item The functor $G:=Hom_{B}(P,-) : mod-B \rightarrow Dom_2(A)$ is inverse to $F$.
\item For $i \geq 3$, $F$ restricts to an equivalence $F: Dom_i(A) \rightarrow (P)^{\perp i-2}$, where $P$ is viewed as a $B$-module.
\end{enumerate}
\end{theorem}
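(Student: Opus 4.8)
The plan is to deduce the theorem from the Morita--Tachikawa correspondence together with a module-wise refinement of Mueller's theorem, carrying out the bookkeeping explicitly. Since $\operatorname{domdim}(A)\ge 2$, Morita--Tachikawa gives a basic generator--cogenerator $M$ of $\operatorname{mod}-B$ with $A\cong\End_B(M)$, and one identifies $B$ with $eAe$ for an idempotent $e\in A$ such that $eM$ is a summand of $M$ isomorphic to $B$. First I would record the standard QF-3 facts: the minimal faithful projective--injective left and right $A$-modules arise from this same idempotent, so $\End_A(P)\cong eAe\cong\End_A(I)$ (this is the preliminary assertion of the theorem); the left $A$-module $P$, regarded as a right $B$-module, is $M$, so that $G=\Hom_B(P,-)=\Hom_B(M,-)$; and $F=\Hom_A(P,-)$ is naturally isomorphic to the Schur-type functor attached to $e$, which is \emph{exact} (a $\Hom$ out of a projective module), while $G$ is only left exact.

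For parts (1) and (2) I would proceed as follows. A direct computation, restricting a $B$-homomorphism $M\to Y$ to the summand $B\subseteq M$, gives a natural isomorphism $FG\cong\operatorname{id}_{\operatorname{mod}-B}$; hence $G$ is fully faithful, and it remains to identify its essential image and its effect on projective--injective modules. Next I would establish the restricted equivalence $\add(I)\simeq\{\text{injective }B\text{-modules}\}$: if $J$ is an injective $B$-module then $J\in\add(D(B))\subseteq\add(M)$ since $M$ is a cogenerator, so $G(J)=\Hom_B(M,J)\in\add(\Hom_B(M,M))=\add(A)$ is projective; it is also injective over $A$ because $\Hom_B(M,D(B))\cong D(M)$ and $M$ is projective as a left $\End_B(M)$-module --- the standard fact that generators are projective over their endomorphism algebra --- so that $D(M)$ is the minimal faithful projective--injective module $I$. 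Consequently every projective--injective $A$-module lies in $\add(D(M))=\add(G(\text{injective cogenerator}))$, hence is of the form $G(J)$ with $J$ injective over $B$, and $F=G^{-1}$ on these by the counit isomorphism. Finally I would show that the unit $\eta_X\colon X\to GF(X)$ is invertible precisely when $X\in Dom_2(A)$: by definition $\operatorname{domdim}(X)\ge 2$ means there is an exact sequence $0\to X\to Q^0\to Q^1$ with $Q^0,Q^1$ projective--injective, and applying the exact functor $F$ and then the left-exact functor $G$, together with $GF\cong\operatorname{id}$ on $\add(I)$ and comparison of kernels via naturality of $\eta$, forces $\eta_X$ to be an isomorphism; conversely, if $\eta_X$ is invertible then $X\cong G(Y)$ with $Y=F(X)$, and applying the left-exact $G$ to an injective copresentation $0\to Y\to J^0\to J^1$ of $Y$ exhibits a projective--injective copresentation of $X$, so $\operatorname{domdim}(X)\ge 2$. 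This proves (1) and (2) with $F$ and $G$ mutually inverse.

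For part (3), fix $i\ge 3$ and $X\in Dom_2(A)$ with $Y=F(X)$, so $X\cong G(Y)=\Hom_B(P,Y)$. The right derived functors of $G=\Hom_B(P,-)$ are the $\Ext^j_B(P,-)$. Taking a minimal injective coresolution $0\to Y\to J^0\to J^1\to\cdots$ over $B$ and applying $G$, the long exact sequences through the cosyzygies show that the complex $0\to GY\to GJ^0\to GJ^1\to\cdots$ --- whose terms are projective--injective over $A$ by the restricted equivalence of the previous step --- is exact at $GJ^k$ (for $k\ge 1$) exactly when $\Ext^k_B(P,Y)=0$. Hence $Y\in P^{\perp i-2}$, i.e.\ $\Ext^j_B(P,Y)=0$ for $1\le j\le i-2$, is precisely what is needed to extract an exact sequence $0\to GY\to GJ^0\to\cdots\to GJ^{i-1}$ with projective--injective terms, that is, a projective--injective copresentation of $X\cong GY$ long enough to certify $\operatorname{domdim}(X)\ge i$. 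For the converse, if $\operatorname{domdim}(X)\ge i$ one picks a projective--injective copresentation $0\to X\to Q^0\to\cdots\to Q^{i-1}$; applying the exact functor $F$ yields an injective copresentation of $Y$ in $\operatorname{mod}-B$ of length $i$, and computing $\Ext^j_B(P,Y)$ from it, using $G(FQ^k)\cong Q^k$ and the exactness of $0\to X\to Q^0\to\cdots$, shows these groups vanish for $1\le j\le i-2$. Thus $F$ restricts to an equivalence $Dom_i(A)\to P^{\perp i-2}$. (Equivalently, this step is the refined Mueller theorem $\operatorname{domdim}(\Hom_B(M,Y))=\inf\{\,j\ge 1: \Ext^j_B(M,Y)\neq 0\,\}+1$, which at $Y=B$ recovers Mueller's original statement.)

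I expect the main obstacle to be the restricted equivalence $\add(I)\simeq\{\text{injectives}\}$ --- specifically the point that $\Hom_B(M,-)$ carries injective $B$-modules to \emph{injective}, not merely projective, $A$-modules, which hinges on $M$ being projective over $\End_B(M)$ --- together with keeping the left/right bimodule conventions consistent throughout. Once that equivalence is in place, the rest is formal: repeated use of the long exact sequences of derived functors, the characterisation of dominant dimension by projective--injective copresentations, and naturality of the unit and counit.
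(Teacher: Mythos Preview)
The paper does not actually prove this theorem: immediately after the statement it says that the result ``can be found in \cite{Mar} as theorem 2.2.\ with detailed references to the article \cite{APT}'', so it is quoted from the literature rather than argued anew. There is therefore no in-paper proof to compare your proposal against.

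That said, your sketch is essentially the standard argument one finds in those references. Identifying $A\cong\End_B(M)$ with $M$ a generator--cogenerator via Morita--Tachikawa, realising $F$ as the exact Schur-type functor attached to the idempotent $e$ with $eA$ minimal faithful projective--injective, showing $FG\cong\mathrm{id}$ directly, and then characterising the essential image of $G$ as $Dom_2(A)$ by pushing and pulling a length-two projective--injective copresentation through $F$ and $G$ --- this is exactly how parts (1) and (2) are established in the Auslander--Platzeck--Todorov framework. Your treatment of part (3), computing $\Ext^{j}_B(P,Y)$ for $1\le j\le i-2$ from the image under $F$ of a projective--injective copresentation of length $i$ and reading off vanishing from the exactness of the original copresentation after applying $G$, is precisely the module-wise refinement of Mueller's theorem that the paper alludes to. The point you single out as the main obstacle --- that $\Hom_B(M,-)$ carries injective $B$-modules to projective--\emph{injective} $A$-modules, resting on the fact that a generator is projective over its endomorphism ring so that $\Hom_B(M,D(B))\cong D(M)$ is injective --- is indeed the only place where something beyond formal adjunction is used, and your justification is the right one.

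The one thing to watch is exactly what you already flag: the left/right conventions. As written in the paper, $P$ is a \emph{left} $A$-module while $Dom_2(A)$ consists of \emph{right} $A$-modules, so ``$\Hom_A(P,-)$'' should really be read as the Schur functor $X\mapsto Xe\cong\Hom_A(eA,X)$ (or one works on the opposite side throughout). Your proof implicitly makes this identification; just be explicit about it when you write things out in full, and the argument goes through without change.
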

An algebra $A$ is called \emph{higher Auslander algebra} in case $\infty>domdim(A)=gldim(A) \geq 2$, see \cite{Iya} and $A$ is called \emph{Auslander-Gorenstein algebra} in case $\infty>domdim(A)=Gdim(A) \geq 2$, see \cite{IyaSol} (here we exclude selfinjective algebras that are not interesting for the theory we develope).
A module $M$ is called \emph{Gorenstein projective} in case $Ext^{i}(D(A),\tau(M)) \cong Ext^{i}(M,A) \cong 0$ for all $i \geq 1$. Every non-projective Gorenstein projective module has infinite projective dimension. As in the case of usual projective resolutions, every module $M$ has a resolution by (possibly infinitely generated) Gorenstein projective modules and a corresponding \emph{Gorenstein projective dimension} $Gpd(M)$, see \cite{Che} for more details. $\Omega^{i}(A-mod)$ denotes the full subcategory of all projective modules and modules which are $i$-th syzygies, $Gp(A)$ denotes the full subcategory of Gorenstein projective modules and $Gp_{\infty}(A)$ denotes the full subcategory of modules having infinite Gorenstein projective dimension. 
We will need the following proposition:
\begin{proposition}\label{marvil}
Let $A$ be an algebra of dominant dimension $d \geq 1$, then $\Omega^{i}(A-mod)=Dom_i(A)$ for every $i \leq d$.
\end{proposition}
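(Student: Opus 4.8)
The plan is to establish the two inclusions $\Omega^{i}(A\text{-mod}) \subseteq Dom_{i}(A)$ and $Dom_{i}(A) \subseteq \Omega^{i}(A\text{-mod})$ separately, using only the definition of the dominant dimension together with its standard behaviour along short exact sequences.

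For $Dom_{i}(A) \subseteq \Omega^{i}(A\text{-mod})$, I would start with a module $M$ satisfying $\operatorname{domdim}(M) \geq i$ and pick a minimal injective resolution $0 \to M \to I_{0} \to I_{1} \to \cdots$. By the definition of the dominant dimension, the modules $I_{0}, \dots, I_{i-1}$ are projective. Setting $N = \operatorname{coker}(I_{i-2} \to I_{i-1})$ one obtains an exact sequence $0 \to M \to I_{0} \to \cdots \to I_{i-1} \to N \to 0$ all of whose middle terms are projective, and this sequence exhibits $M$ as an $i$-th syzygy of $N$; hence $M \in \Omega^{i}(A\text{-mod})$. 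Note that this inclusion does not use the hypothesis $i \leq d$.

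For the reverse inclusion, note first that every projective module $P$ has $\operatorname{domdim}(P) \geq \operatorname{domdim}(A) \geq d \geq i$, since the dominant dimension of a direct sum is the minimum of the dominant dimensions of its summands and $\operatorname{domdim}(A) = \min_{j} \operatorname{domdim}(e_{j}A)$; so $Proj(A) \subseteq Dom_{i}(A)$. Now let $M$ be an $i$-th syzygy, say $0 \to M \to P_{i-1} \to \cdots \to P_{0} \to N \to 0$ is exact with every $P_{j}$ projective, and break this into short exact sequences $0 \to C_{j} \to P_{j-1} \to C_{j-1} \to 0$ for $j = 1, \dots, i$, where $C_{0} = N$ and $C_{i} = M$. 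I would then prove $\operatorname{domdim}(C_{j}) \geq j$ by induction on $j$: the case $j = 0$ is clear, and the inductive step uses the inequality $\operatorname{domdim}(C_{j}) \geq \min\{\operatorname{domdim}(P_{j-1}), \operatorname{domdim}(C_{j-1}) + 1\}$, which holds for any short exact sequence, together with $\operatorname{domdim}(P_{j-1}) \geq \operatorname{domdim}(A) \geq d \geq j$; this is exactly where the hypothesis $i \leq d$ is needed. The case $j = i$ gives $\operatorname{domdim}(M) \geq i$, that is $M \in Dom_{i}(A)$.

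The one ingredient that is not routine is the short exact sequence estimate $\operatorname{domdim}(X) \geq \min\{\operatorname{domdim}(Y), \operatorname{domdim}(Z) + 1\}$ for $0 \to X \to Y \to Z \to 0$, and this is where I expect the only real work to be; it is a classical fact in the theory of dominant dimensions (see for example \cite{Ta}) and is proved by comparing minimal injective resolutions, using that a direct summand of a projective-injective module is again projective-injective. Note that neither the Morita-Tachikawa correspondence nor Theorem \ref{ARSmaintheorem} is needed here.
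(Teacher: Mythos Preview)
Your argument is correct. The paper itself does not prove this proposition at all; its entire proof reads ``See \cite{MarVil}, proposition 4.'' So there is nothing to compare at the level of strategy---you have supplied the argument that the paper delegates to a reference, and the two-inclusion approach you give (injective coresolution for $Dom_i(A)\subseteq\Omega^i(A\text{-mod})$, induction along short exact sequences with the depth-type inequality $\operatorname{domdim}(X)\geq\min\{\operatorname{domdim}(Y),\operatorname{domdim}(Z)+1\}$ for the converse) is exactly the standard route and is essentially how Mart\'inez Villa proves it.

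One minor point worth making explicit: in your first inclusion you exhibit $M$ as the $i$-th kernel in a projective resolution of $N$ that need not be minimal. This is fine under the paper's definition of $\Omega^i(A\text{-mod})$ as ``projective modules and modules which are $i$-th syzygies,'' since a syzygy is customarily any such kernel; but if one reads $\Omega^i(A\text{-mod})$ as $\{\Omega^i(N)\oplus P : N\in\mod A,\ P\ \text{projective}\}$, a one-line appeal to Schanuel's lemma closes the gap. Either way the argument goes through, and your identification of where the hypothesis $i\leq d$ enters (namely $\operatorname{domdim}(P_{j-1})\geq d\geq j$) is exactly right.
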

\begin{proof}
See \cite{MarVil}, proposition 4.
\end{proof}
For a given subcategory $C$ of $mod-A$, a \emph{minimal right $C$-approximation} of a module $X$ is a right minimal map $f: N \rightarrow X$ with $N \in C$ such that $Hom(L,f)$ is surjective for every $L \in C$. Such minimal right approximations always exist and are unique up to isomorphism in case $C=add(M)$ for some module $M$. In case $C=add(M)$, one defines $\Omega_M^{0}(X):=X$, $\Omega_M^{1}(X)$ as the kernel of such an $f$ and inductively $\Omega_M^{n}(X):=\Omega_M^{n-1}(\Omega_M^{1}(X))$. One then defines $add(M)-resdim(X):= inf \{ n \geq 0 | \Omega_M^{n}(X) \in add(M) \}$. Dually, one can define minimal left $C$-approximations. Given an algebra $A$, which is isomorphic to $End_B(M)$ for some algebra $B$ with generator-cogenerator $M$, one can show that minimal $add(M)$-resolutions in $mod-B$ of a module $X$ correspond to minimal projective resolutions of the module $Hom_B(M,X)$ in $mod-A$. See \cite{CheKoe} section 2.1. for more information on this.
A subcategory $C$ of $mod-A$ is called \emph{contravariantly finite} in case every module $X \in mod-A$ has a minimal right $C$-approximation. A subcategory $C$ is called \emph{resolving} in case it contains the projective modules, is closed under extensions and closed under kernels of surjections. 
We will need the following result, that can be found in \cite{AR}, 3.9.:
\begin{proposition}\label{ARpropo}
Let $C$ be a resolving contravariantly finite subcategory of $mod-A$. Then every module in $C$ has finite projective dimension bounded by $t$ in case all of the modules $X_i$ have finite projective dimension bounded by $t$, where $f_i: X_i \rightarrow S_i$ are minimal right $C$-approximations of the simple modules $S_i$.
\end{proposition}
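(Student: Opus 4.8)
The plan is to reduce everything to a statement about the simple modules and then to transport the projective-dimension bound from the approximations $X_i$ to all of $C$. First I would record that, since $C$ is resolving, it is closed under syzygies: if $P \twoheadrightarrow M$ is a projective cover of $M \in C$ then $P \in C$ and the kernel $\Omega(M)$ is the kernel of an epimorphism between objects of $C$, hence $\Omega(M) \in C$, and inductively $\Omega^{j}(M) \in C$ for all $j$. Consequently the claim ``$projdim(M) \le t$ for every $M \in C$'' is equivalent to $\Ext^{t+1}(M,S) = 0$ for all $M \in C$ and all simple modules $S$; that is, it suffices to control projective dimension through the simples.

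Next I would analyse the approximation sequences. Each $f_i : X_i \to S_i$ is surjective, because the projective cover of $S_i$ lies in $C$ and therefore factors through $f_i$; writing $K_i = \ker f_i$ gives short exact sequences $0 \to K_i \to X_i \to S_i \to 0$ with $X_i \in C$. The key structural input I would establish is that over a resolving contravariantly finite subcategory the minimal right $C$-approximations are \emph{special}, so that $K_i \in C^{\perp} := \{Y : \Ext^{j}(X,Y)=0 \text{ for all } X \in C \text{ and } j \ge 1\}$; here closure of $C$ under syzygies is exactly what upgrades vanishing of $\Ext^{1}$ to vanishing of all higher $\Ext$-groups. Applying $\Hom(M,-)$ with $M \in C$ to the sequence and using $\Ext^{j}(M,K_i)=0$ then yields natural isomorphisms $\Ext^{j}(M,S_i) \cong \Ext^{j}(M,X_i)$ for all $j \ge 1$. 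Thus it remains to prove $\Ext^{t+1}(M,X_i)=0$ for all $M \in C$ and all $i$, equivalently $\Ext^{>t}(M,N)=0$ for all $M,N \in C$.

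The hard part is this last step, because the hypothesis $projdim(X_i) \le t$ constrains $X_i$ as the \emph{source} of an $\Ext$-group, whereas here $X_i$ (and a general $N \in C$) occurs as the \emph{target}. To bridge this I would pass to the relative-injective objects of $C$, namely an additive generator $\omega$ of $C \cap C^{\perp}$: iterating the special approximations of the $K_i$ produces, for each $M \in C$, a finite $\add(\omega)$-coresolution inside $C$, and dimension shifting along it reduces $\Ext^{>t}(M,-)$ to $\Ext^{>t}(\omega,-)$. One is then left to check $projdim(\omega) \le t$, arguing that every object of $C \cap C^{\perp}$ is assembled by extensions and syzygies from the $X_i$ and the projectives, so that the bound $t$ propagates since projective dimension is subadditive under extensions and does not grow under syzygies.

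I expect the genuine obstacle to lie precisely in making this propagation rigorous: guaranteeing that the $\add(\omega)$-coresolutions terminate, and that each object of $C \cap C^{\perp}$ really is controlled by the simple-approximations with projective dimension at most $t$. This is where the three hypotheses must be used in concert — contravariant finiteness to produce the approximations and place their kernels in $C^{\perp}$, the resolving property to keep syzygies and kernels inside $C$, and the minimality of the maps $f_i$ to bound the lengths of the coresolutions — and I would anticipate the finiteness/termination point to be the step demanding the most care.
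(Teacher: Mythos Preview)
The paper does not prove this proposition; it merely cites \cite{AR}, 3.9. Your reduction up to the isomorphism $\Ext^{j}(M,S_i)\cong\Ext^{j}(M,X_i)$ for $M\in C$ and $j\ge 1$ is correct and is indeed the first half of the Auslander--Reiten argument: Wakamatsu's lemma puts $K_i$ in $C^{\perp_1}$, and closure of $C$ under syzygies upgrades this to $K_i\in C^{\perp}$. The gap is in your ``hard part''. Routing through $\omega=C\cap C^{\perp}$ does not help: the two facts you would need --- that every $M\in C$ has a \emph{finite} $\omega$-coresolution, and that $\operatorname{projdim}(W)\le t$ for $W\in\omega$ --- are not available a priori, and the phrase ``iterating the special approximations of the $K_i$'' does not produce a coresolution of a given $M\in C$ (the $K_i$ lie in $C^{\perp}$, not in $C$). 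Worse, your proposed route to $\operatorname{projdim}(\omega)\le t$, namely that each $W\in\omega$ ``is assembled by extensions and syzygies from the $X_i$ and the projectives'', is a statement about objects of $C$ (since $\omega\subseteq C$) and is exactly the mechanism that proves the proposition outright; so the $\omega$-detour is redundant rather than a reduction.

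The Auslander--Reiten argument runs that mechanism directly. By induction on the length of an arbitrary module $Y$ one constructs a surjection $\widetilde X\twoheadrightarrow Y$ whose source has a finite filtration with subquotients among the $X_i$ and whose kernel has a filtration with subquotients among the $K_i$ (hence lies in $C^{\perp}$). For the induction step, pick a simple quotient $Y\twoheadrightarrow S_i$ with kernel $Y'$, form the pullback of $Y\to S_i\leftarrow X_i$, and use $\Ext^{1}(X_i,C^{\perp})=0$ to lift the inductively built cover $\widetilde X'\twoheadrightarrow Y'$ to the required $\widetilde X\twoheadrightarrow Y$. Now specialise to $Y=M\in C$: the map $\widetilde X\to M$ has source in $C$ and kernel in $C^{\perp}$, hence is a right $C$-approximation and therefore split, so $M$ is a direct summand of a module filtered by the $X_i$. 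Subadditivity of projective dimension on short exact sequences then gives $\operatorname{projdim}(M)\le\max_i\operatorname{projdim}(X_i)\le t$. This length-induction/filtration is the missing idea.
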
 
For a module $M$, we define $M^{\perp n}:= \{ X \in mod-A | Ext^{i}(M,X)=0$ for all $i=1,...,n \}$.
The \emph{finitistic dimension} of an algebra is defined as $findim(A)= \sup \{ pd(N) | pd(N) < \infty \}$.
The \emph{global Gorenstein projective dimension} of an algebra is defined as the supremum of all Gorenstein projective dimensions of modules. It is known that the global Gorenstein projective dimension is finite iff the algbra is Gorenstein, see \cite{Che} corollary 3.2.6. The \emph{finitistic Gorenstein projective dimension} is defined as $Gfindim(A)= \sup \{ Gpd(N) | Gpd(N) < \infty \}$ and in \cite{Che} one finds a quick proof that this always equals the usual finitistic dimension in theorem 3.2.7. We call a module $M$ $d$-rigid, in case $Ext^{i}(M,M)=0$ for $i=1,2,...,d$.
We will also need the following theorem, which can be found as theorem 3.2.5. in \cite{Che} and can be used as a characterisation of the Gorenstein projective dimension of a module.
\begin{theorem} \label{gordimchara}
Let $M$ be a module. $M$ has finite Gorenstein projective dimension at most $n$ iff in every exact sequence of the form $0 \rightarrow K \rightarrow G_{n-1} \rightarrow ... \rightarrow G_1 \rightarrow G_0 \rightarrow M \rightarrow 0$ with Gorenstein projective modules $G_i$, also the module $K$ is Gorenstein projective.

\end{theorem}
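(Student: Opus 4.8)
The plan is to derive both implications from the definition of the Gorenstein projective dimension together with two standard structural inputs about the class $Gp(A)$: that it is resolving and closed under direct summands, and that $Gpd(X) \leq m$ holds if and only if $Ext^{j}(X,Q)=0$ for all $j>m$ and all modules $Q$ of finite projective dimension (so in particular $Ext^{j}(G,Q)=0$ for every $j \geq 1$, every $G \in Gp(A)$ and every such $Q$). Both are available in \cite{Che}.

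For the ``if'' direction I would simply produce one exact sequence of the required shape: truncating a projective resolution $\cdots \to P_{1} \to P_{0} \to M \to 0$ after $n$ steps yields $0 \to \Omega^{n}(M) \to P_{n-1} \to \cdots \to P_{0} \to M \to 0$, which has the stated form because projective modules are Gorenstein projective. The hypothesis then forces $\Omega^{n}(M) \in Gp(A)$, so this truncated resolution is a Gorenstein projective resolution of $M$ of length at most $n$, whence $Gpd(M) \leq n$.

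For the ``only if'' direction I would argue by induction on $n$. Assume $Gpd(M) \leq n$ and let $0 \to K \to G_{n-1} \to \cdots \to G_{0} \to M \to 0$ be an arbitrary exact sequence with all $G_{i} \in Gp(A)$. If $n=0$ this reads $0 \to K \to M \to 0$, so $K \cong M$ is Gorenstein projective. For $n \geq 1$, set $M_{1}=\ker(G_{0}\to M)$, yielding the short exact sequences $0 \to M_{1} \to G_{0} \to M \to 0$ and $0 \to K \to G_{n-1} \to \cdots \to G_{1} \to M_{1} \to 0$. The key intermediate claim is $Gpd(M_{1}) \leq n-1$: applying $Hom(-,Q)$ with $Q$ of finite projective dimension to the first sequence and using $Ext^{\geq 1}(G_{0},Q)=0$, the connecting maps give isomorphisms $Ext^{i}(M_{1},Q) \cong Ext^{i+1}(M,Q)$ for all $i \geq 1$, and the right-hand side vanishes once $i+1>n$, i.e. once $i>n-1$; the cohomological criterion then gives the claim. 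Applying the inductive hypothesis to $0 \to K \to G_{n-1} \to \cdots \to G_{1} \to M_{1} \to 0$, an exact sequence of length $n-1$ resolving $M_{1}$ by Gorenstein projectives, yields $K \in Gp(A)$ and finishes the induction.

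The step I expect to be the main obstacle is not this dimension shift but the two background facts it rests on — the closure properties of $Gp(A)$ and especially the $Ext$-vanishing characterisation of the Gorenstein projective dimension — which require genuine work to establish from scratch. One should also be careful throughout that Gorenstein projective modules need not be finitely generated, so the resolutions in play may involve arbitrary Gorenstein projective modules rather than finitely generated ones.
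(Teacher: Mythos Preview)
The paper does not give a proof of this statement at all: it is quoted as Theorem~3.2.5 of \cite{Che} and used as a black box. So there is no ``paper's own proof'' to compare against.

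Your argument is a correct reconstruction of the standard proof. The ``if'' direction is exactly right: truncating a projective resolution exhibits one sequence of the required shape, and the hypothesis forces $\Omega^{n}(M)\in Gp(A)$. For the ``only if'' direction, your dimension-shift via the short exact sequence $0\to M_1\to G_0\to M\to 0$ together with the $\Ext$-vanishing criterion $Gpd(X)\le m \iff \Ext^{j}(X,Q)=0$ for all $j>m$ and all $Q$ of finite projective dimension is the standard route, and the induction closes cleanly. You are also right that this criterion is the real work: it is essentially Holm's theorem (see \cite{Che}), and once it is in hand the theorem follows formally. Two minor remarks: you list closure of $Gp(A)$ under direct summands among the inputs, but your argument never actually uses it; and the caveat about possibly infinitely generated Gorenstein projectives is prudent but harmless here, since both the $\Ext$-criterion and the dimension-shift step are insensitive to finite generation.
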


Recall that the eveloping algebra $A^{e}$ for an arbitrary algebra $A$ is defined as $A^{e}:=A^{op} \otimes_K A$ and an algebra is called $m$-periodic in case the $A^{e}-$module $A$ has $\Omega$-period $m$. Being $m$-periodic implies that the algebra is selfinjective and that every indecomposable non-projective module $M$ is periodic of period at most $m$, that is $\Omega^i(M) \cong M$ for some $i$ with $1 \leq i \leq m$. See chapter IV.11. of \cite{SkoYam} for this and more on periodic algebras.

\section{Finitistic Auslander algebras}
This section introduces finitistic Auslander algebras and gives new relations between dominant dimension and the finitistic dimension.
\begin{lemma} \label{findimlemma}
Let $A$ be an algebra of dominant dimension $d \geq 1$. \newline
We have $findim(A)=d+\sup \{ pd(N) | domdim(N) \geq d , pd(N) < \infty \}$.
\end{lemma}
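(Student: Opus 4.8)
I would set $s:=\sup\{pd(N)\mid domdim(N)\ge d,\ pd(N)<\infty\}$, so that the claimed identity reads $findim(A)=d+s$, and prove the two inequalities $findim(A)\le d+s$ and $findim(A)\ge d+s$ separately; throughout one uses that every projective module $P$ has $domdim(P)\ge d$, because $domdim(A)=\min_i domdim(e_iA)=d$, so in particular the set defining $s$ is nonempty and $s\ge 0$.

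For $findim(A)\le d+s$, let $N$ be a module with $pd(N)<\infty$. If $pd(N)\le d$ then $pd(N)\le d\le d+s$; if $pd(N)>d$, let $\Omega^d(N)$ be the $d$-th syzygy in a minimal projective resolution of $N$, so $pd(\Omega^d(N))=pd(N)-d<\infty$, and by Proposition~\ref{marvil} (applied with $i=d\le d=domdim(A)$) one has $\Omega^d(N)\in\Omega^d(A-mod)=Dom_d(A)$, hence $domdim(\Omega^d(N))\ge d$ and so $pd(\Omega^d(N))\le s$, i.e. $pd(N)\le d+s$. Taking the supremum over all $N$ of finite projective dimension gives $findim(A)\le d+s$.

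For $findim(A)\ge d+s$ it is enough to produce, for every module $N$ with $domdim(N)\ge d$ and $pd(N)=n<\infty$, a module of projective dimension $n+d$, together with the bound $findim(A)\ge d$ (which handles the case that the supremum defining $s$ is attained only by projectives). If $N$ is not projective, then $N\in Dom_d(A)=\Omega^d(A-mod)$ is a $d$-th syzygy by Proposition~\ref{marvil}; comparing the defining exact sequence with a minimal projective resolution via Schanuel's lemma, and using that any module stably isomorphic to the non-projective module $N$ is again non-projective, one may assume $N\cong\Omega^d(X)$ is the $d$-th syzygy of the minimal projective resolution of some module $X$. Since $N$ is nonzero and non-projective, this resolution does not collapse before step $d$, so $pd(X)=d+pd(\Omega^d(X))=d+n$ is finite and $findim(A)\ge pd(X)=n+d$. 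For the bound $findim(A)\ge d$, choose an indecomposable projective $P$ with $domdim(P)=d$ (possible as $d=\min_i domdim(e_iA)$) and minimal injective resolution $0\to P\to I^0\to I^1\to\cdots$, so that $I^0,\dots,I^{d-1}$ are projective-injective while $I^d$ is not projective; put $X:=\Omega^{-d}(P)=\operatorname{coker}(I^{d-2}\to I^{d-1})$, which sits in an exact sequence $0\to P\to I^0\to\cdots\to I^{d-1}\to X\to 0$, so reading this sequence as a projective resolution of $X$ yields $pd(X)\le d$. If $pd(X)=k<d$, the $k$-th syzygy of $X$ in this resolution equals the cosyzygy $\Omega^{-(d-k)}(P)$, which would then be projective by Schanuel's lemma; but the minimal injective resolution of $\Omega^{-(d-k)}(P)$ is $0\to\Omega^{-(d-k)}(P)\to I^{d-k}\to I^{d-k+1}\to\cdots$, with $I^{d-k},\dots,I^{d-1}$ projective and $I^d$ not, so $domdim(\Omega^{-(d-k)}(P))=k<d$, contradicting $domdim(\text{projective})\ge d$. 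Hence $pd(X)=d$, $findim(A)\ge d$, and combining everything gives $findim(A)=d+s$.

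The step I expect to be the main obstacle is the inequality $findim(A)\ge d$: since $Dom_d(A)$ may a priori contain no non-projective module of finite projective dimension, one must manufacture a module of projective dimension exactly $d$ from the dominant dimension alone, and the crucial point is that the $j$-th cosyzygy of an indecomposable projective of minimal dominant dimension $d$ has dominant dimension $d-j$ and hence cannot be projective for $1\le j\le d$. The remaining ingredients — the Schanuel identifications relating cosyzygies of $P$ to syzygies of $X$, and the bookkeeping with minimal injective and projective resolutions — are routine, as are the inequality $findim(A)\le d+s$ and the non-projective case, which follow directly from Proposition~\ref{marvil} and standard syzygy theory.
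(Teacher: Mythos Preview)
Your proof is correct and follows essentially the same approach as the paper: both arguments rest on Proposition~\ref{marvil} (the identification $Dom_d(A)=\Omega^d(A\text{-}mod)$) together with the cosyzygy construction $\Omega^{-d}$ of a projective to obtain a module of projective dimension exactly $d$. The paper's version is considerably more compressed---it simply asserts that $\Omega^{-d}(A)$ has projective dimension $d$ and then writes ``one immediately obtains the lemma'' from $pd(X)=pd(\Omega^d(X))+d$---whereas you spell out both inequalities separately, handle the projective case of $N$ explicitly, and justify carefully (via the dominant dimension of intermediate cosyzygies) why the cosyzygy module cannot have projective dimension strictly less than $d$; these are precisely the details the paper leaves to the reader.
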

\begin{proof}
First note that the finitistic dimension is larger than or equal to the dominant dimension:
The exact sequence coming from a minimal injective coresolution of the regular module: 
$0 \rightarrow A \rightarrow I_0 \rightarrow \cdots \rightarrow I_{d-1} \rightarrow \Omega^{-d}(A) \rightarrow 0$ shows that the module $\Omega^{-d}(A)$ has finite projective dimension $d$. Thus the finitistic dimension is at least $d$.
Assume the projective dimension $s \geq d$ is attained at the module $X$: $pd(X)=s$.
Looking at the minimal projective resolution of $X$: $0 \rightarrow P_s \rightarrow \cdots P_0 \rightarrow X \rightarrow 0$ and using $pd(X)=pd(\Omega^{d}(X))+d$ one immediatly obtains the lemma, since $\Omega^{d}(X)$ has dominant dimension at least $d$ by \ref{marvil} and its projective dimension equals $s-d$.
\end{proof}
Let $P^{< \infty}(A)$ denote the full subcategory of modules having finite projective dimension. It is well known that the finitistic dimension of an algebra $A$ is finite in case $P^{< \infty}(A)$ is contravariantly finite, for example using \ref{ARpropo}. Here we show that it is enough that a smaller subcategory $Dom_i(A) \cap Proj_{<\infty}$ is contravariantly finite in case the algebra has dominant dimension at least $d$ and $i \leq d$.
\begin{proposition}
Let $A$ be an algebra of positive dominant dimension $d$, then $A$ has finite finitistic dimension in case the subcategory $Dom_l(A) \cap Proj_{<\infty}(A)$ is contravariantly finite for some $l \leq d$.
\end{proposition}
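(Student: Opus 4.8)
The strategy is to apply the Auslander--Reiten criterion (Proposition~\ref{ARpropo}) to the subcategory $C := Dom_l(A) \cap Proj_{<\infty}(A)$, and then feed the resulting uniform bound into Lemma~\ref{findimlemma}. (If $l=0$ this is the classical criterion, so assume $l\geq 1$.) The first and main step is to verify that $C$ is resolving. It contains the projectives, since every projective $P$ has $pd(P)=0$ and, because $domdim(A)=\min_i domdim(e_iA)$, also $domdim(P)\geq domdim(A)=d\geq l$. The subcategory $Proj_{<\infty}(A)$ is resolving by the elementary fact that in a short exact sequence $0\to X\to Y\to Z\to 0$ finiteness of two of the three projective dimensions forces finiteness of the third. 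Hence it remains to show that $Dom_l(A)$ is closed under extensions and under kernels of epimorphisms; the intersection of two resolving subcategories is then resolving.

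For extensions, apply the Horseshoe Lemma to the minimal injective coresolutions of $M_1$ and $M_2$ in a short exact sequence $0\to M_1\to M\to M_2\to 0$ with $domdim(M_i)\geq l$: one obtains an injective coresolution of $M$ whose $i$-th term is the direct sum of those of $M_1$ and $M_2$, hence projective--injective for $i\leq l-1$; since the terms of the minimal injective coresolution of $M$ are summands of those of any injective coresolution, $domdim(M)\geq l$. For kernels of epimorphisms, let $0\to M_1\to M\to M_2\to 0$ with $domdim(M),domdim(M_2)\geq l$. Then $M_1\subseteq M\hookrightarrow I_0(M)$ with $I_0(M)$ projective--injective, so $I_0(M_1)$ is a summand of $I_0(M)$, hence projective, giving $domdim(M_1)\geq 1$ and $domdim(M_1)=domdim(\Omega^{-1}M_1)+1$, where $\Omega^{-1}M_1=I_0(M_1)/M_1$. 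Placing $I_0(M_1)$ as a summand $I_0(M)=I_0(M_1)\oplus T$ compatibly with $M_1\subseteq M$ yields a short exact sequence $0\to M_2\to \Omega^{-1}M_1\oplus T\to \Omega^{-1}M\to 0$; since $domdim(M_2)\geq l$ and $domdim(\Omega^{-1}M)=domdim(M)-1\geq l-1$, the extension closure already established gives $domdim(\Omega^{-1}M_1\oplus T)\geq l-1$, hence $domdim(\Omega^{-1}M_1)\geq l-1$, so $domdim(M_1)\geq l$. (Alternatively one may cite Proposition~\ref{marvil} to identify $Dom_l(A)$ with the syzygy category $\Omega^{l}(A-mod)$ and quote that this is resolving.) Thus $C$ is resolving.

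By hypothesis $C$ is also contravariantly finite, so each of the finitely many simple modules $S_i$ has a minimal right $C$-approximation $f_i:X_i\to S_i$ with $X_i\in C\subseteq Proj_{<\infty}(A)$; hence $t:=\max_i pd(X_i)<\infty$, and Proposition~\ref{ARpropo} shows that every module in $C$ has projective dimension at most $t$. Since $d\geq l$ we have $Dom_d(A)\cap Proj_{<\infty}(A)\subseteq C$, so every module $N$ with $domdim(N)\geq d$ and $pd(N)<\infty$ satisfies $pd(N)\leq t$, and Lemma~\ref{findimlemma} gives $findim(A)=d+\sup\{pd(N)\mid domdim(N)\geq d,\ pd(N)<\infty\}\leq d+t<\infty$. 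The main obstacle is the resolving property of $C$ -- concretely, the stability of dominant dimension under kernels of epimorphisms; everything afterwards is a direct application of Proposition~\ref{ARpropo} and Lemma~\ref{findimlemma}.
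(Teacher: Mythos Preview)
Your proof is correct and follows essentially the same route as the paper: verify that $C=Dom_l(A)\cap Proj_{<\infty}(A)$ is resolving, apply Proposition~\ref{ARpropo} to bound projective dimensions in $C$, and conclude via Lemma~\ref{findimlemma}. The only difference is that where the paper cites \cite{MarVil} for the fact that $Dom_l(A)$ is resolving, you supply a direct argument (Horseshoe for extensions, a cosyzygy shift for kernels of epimorphisms); your alternative remark invoking Proposition~\ref{marvil} is exactly the paper's shortcut.
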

\begin{proof}
Let $C:=Dom_l(A) \cap P^{< \infty}(A)$ for some $l \leq d$.
We want to use \ref{ARpropo}. First note that the intersection of two resolving subcategories is resolving and that $Dom_l(A)$ is resolving (see for example \cite{MarVil} proposition 1), while the property that $P^{< \infty}(A)$ is resolving is well known. Thus $C$ is a contravariantly finite resolving subcategory and the $X_i$ (defined as the modules, such that $f_i: X_i \rightarrow S_i$ are minimal right $C$-approximations of the simples) have finite projective dimension bounded by some number $t$, since they are contained in $P^{< \infty}(A)$. Thus all modules in $C$ have finite projective dimension bounded by $t$ and the result follows from \ref{findimlemma}.
\end{proof}
The proposition is useful in various situations where it is hard to calculate $Proj_{<\infty}(A)$ but the subcategory $Dom_d(A)$ is representation-finite.
For example for the large class of monomial algebras $A$ of dominant dimension at least two, we have $Dom_2(A)= \Omega^2(mod-A)$ and this category is representation-finite for monomial algebras (see \cite{Z}) and thus also the subcategory $Dom_2(A) \cap Proj_{<\infty}(A)$ is representation-finite and we can conclude directly that such algebras have finite finitistic dimension and can calculate the finitistic dimension by calculating approximations of the simple modules in the subcategory $Dom_2(A) \cap Proj_{<\infty}(A)$, that is usually much smaller than the subcategory $Proj_{<\infty}(A)$. 
We remark that we are not aware of an algebra with dominant dimension $d \geq 1$ such that such that $Dom_l(A) \cap Proj_{<\infty}(A)$ is not contravariantly finite for any $0 \leq l \leq d$.
We formulate this as a question:
\begin{question}
Given an algebra of dominant dimension $d \geq 1$, is $Dom_l(A) \cap Proj_{<\infty}(A)$ contravariantly finite for some $l$ with $0 \leq l \leq d$?
\end{question}
A positive answer to the previous question would prove the finitistic dimension conjecture for algebras with positive dominant dimension and thus prove the Nakayama conjecture.
Now we come to the generalisation of Auslander-Gorenstein algebras:
\begin{definition}
An algebra with finite dominant dimension $d \geq 2$ is called a \emph{finitistic Auslander algebra} in case its finitistic dimension equals its dominant dimension.
\end{definition}
Note that by the Morita-Tachikawa correspondence, every finitistic Auslander algebra $A$ is isomorphic to an algebra of the form $End_B(M)$ for some algebra $B$ with generator-cogenerator $M$, since by assumption $A$ has dominant dimension at least two.
We remark that every Auslander-Gorenstein algebra and thus every higher Auslander algebra is a finitistic Auslander algebra, since the finitistic dimension equals the Gorenstein dimension in case the Gorenstein dimension is finite (see for example \cite{Che}). We will later see many examples of a finitistic Auslander algebra of infinite Gorenstein dimension, showing that the class of finitistic Auslander algebras is much bigger than the class of Auslander-Gorenstein algebras.

The next theorem gives another characterisation of finitistic Auslander algebras using the subcategory of modules having dominant dimension at least $d$.

\begin{theorem} \label{maintheorem}
Let $A \cong End_B(M)$ be an algebra of finite dominant dimension $d \geq 2$, where $M$ is a generator-cogenerator.
The following are equivalent:
\begin{enumerate}
\item $A$ is a finitistic Auslander algebra.
\item $Dom_d(A) \subseteq Proj(A) \cup Proj_{\infty}(A)$.
\item $Dom_d(A) \subseteq Gp(A) \cup Gp_{\infty}(A)$.
\item $add(M)-resdim(X)= \infty$ for all $X \in M^{\perp d-2} \setminus add(M)$
\end{enumerate}
\end{theorem}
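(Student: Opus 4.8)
The plan is to prove the four conditions equivalent by routing everything through condition $(2)$, using only the results recalled above: Lemma \ref{findimlemma} for $(1)\Leftrightarrow(2)$, the equivalence of Theorem \ref{ARSmaintheorem} together with the correspondence between $add(M)$-resolutions and projective resolutions for $(2)\Leftrightarrow(4)$, and the identity $findim(A)=Gfindim(A)$ of \cite{Che} together with Proposition \ref{marvil} and Theorem \ref{gordimchara} for $(2)\Leftrightarrow(3)$. For $(1)\Leftrightarrow(2)$: Lemma \ref{findimlemma} gives $findim(A)=d+\sup\{pd(N)\mid domdim(N)\ge d,\ pd(N)<\infty\}$. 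Since $domdim(A)=d=\min_i domdim(P_i)$, every projective module has dominant dimension at least $d$, so the set over which the supremum is taken contains $0$ (realised by the projectives). Hence $A$ is a finitistic Auslander algebra, i.e. $findim(A)=d$, exactly when that supremum is $0$, i.e. when every module of dominant dimension at least $d$ and finite projective dimension is projective, which is precisely $Dom_d(A)\subseteq Proj(A)\cup Proj_{\infty}(A)$.

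For $(2)\Leftrightarrow(4)$: since $domdim(A)=d\ge 2$ and $A\cong End_B(M)$ with $M$ a generator-cogenerator, the functor $Hom_B(M,-)$ restricts (by Theorem \ref{ARSmaintheorem}; for $d=2$ this is the Morita-Tachikawa equivalence, noting $M^{\perp 0}=mod-B$) to an equivalence $M^{\perp d-2}\simeq Dom_d(A)$ carrying $add(M)$ onto $Proj(A)$. Moreover, as recalled in the preliminaries (see \cite{CheKoe}), minimal $add(M)$-resolutions of $X\in mod-B$ correspond to minimal projective resolutions of $Hom_B(M,X)$, so $add(M)-resdim(X)=pd(Hom_B(M,X))$, which is $0$ if and only if $X\in add(M)$. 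Transporting $(2)$ — the assertion that every object of $Dom_d(A)$ has projective dimension in $\{0,\infty\}$ — along this equivalence yields: every $X\in M^{\perp d-2}$ has $add(M)-resdim(X)\in\{0,\infty\}$, i.e. $add(M)-resdim(X)=\infty$ for every $X\in M^{\perp d-2}\setminus add(M)$, which is $(4)$; and this chain of reformulations is reversible.

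For $(2)\Leftrightarrow(3)$: the implication $(3)\Rightarrow(2)$ is soft, since if $Y\in Dom_d(A)$ has $pd(Y)<\infty$ then $Gpd(Y)\le pd(Y)<\infty$, so $(3)$ forces $Y$ Gorenstein projective, and a Gorenstein projective module of finite projective dimension is projective. For $(2)\Rightarrow(3)$: by $(1)\Leftrightarrow(2)$ and $findim(A)=Gfindim(A)$ (\cite{Che}) we have $Gfindim(A)=d$. Given $Y\in Dom_d(A)=\Omega^d(A-mod)$ (Proposition \ref{marvil}) with $Gpd(Y)<\infty$, either $Y$ is projective, hence Gorenstein projective and we are done, or $Y$ fits into an exact sequence $0\to Y\to P_{d-1}\to\cdots\to P_0\to Z\to 0$ with the $P_i$ projective; splicing a finite Gorenstein projective resolution of $Y$ onto this sequence shows $Gpd(Z)<\infty$, whence $Gpd(Z)\le Gfindim(A)=d$, and then Theorem \ref{gordimchara}, applied to that same length-$d$ resolution of $Z$ (whose terms $P_i$ are projective, hence Gorenstein projective), forces $Y$ to be Gorenstein projective. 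In every case $Y\in Gp(A)\cup Gp_{\infty}(A)$, giving $(3)$.

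The steps $(1)\Leftrightarrow(2)$ and $(2)\Leftrightarrow(4)$ are essentially bookkeeping once Lemma \ref{findimlemma} and Theorem \ref{ARSmaintheorem} are in hand. The part I expect to require the most care is $(2)\Rightarrow(3)$: the idea is to pass through $findim(A)=Gfindim(A)$ and to use $Dom_d(A)=\Omega^d(A-mod)$ so as to bound Gorenstein projective dimension by peeling off $d$ syzygies, then invoke the characterisation of Theorem \ref{gordimchara}, which keeps one working with ordinary projective resolutions throughout. One should also be careful, in $(2)\Leftrightarrow(4)$, with the precise identification of $M$ (as a $B$-module) with the module attached to $A$ by Theorem \ref{ARSmaintheorem} and with the fact that this equivalence matches $Proj(A)$ with $add(M)$ and $add(M)$-resolution length with projective dimension over $A$.
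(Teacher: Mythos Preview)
Your proof is correct and follows essentially the same route as the paper: the equivalences $(1)\Leftrightarrow(2)$ and $(2)\Leftrightarrow(4)$ are handled identically via Lemma~\ref{findimlemma} and Theorem~\ref{ARSmaintheorem}, and the link with $(3)$ goes through $Gfindim(A)=findim(A)$ and Theorem~\ref{gordimchara}. The only cosmetic difference is that for $(2)\Rightarrow(3)$ you argue directly, using Proposition~\ref{marvil} to realise $Y\in Dom_d(A)$ as a $d$-th syzygy and then bounding $Gpd(Z)\le d$, whereas the paper argues by contradiction, using the injective coresolution of $X$ to produce $\Omega^{-d}(X)$ of Gorenstein projective dimension exceeding $d$; these are the same exact sequence read in opposite directions.
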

\begin{proof}
First we show that (1) and (2) are equivalent: Just note that by \ref{findimlemma}, $A$ is a finitistic Auslander algebra iff every module of dominant dimension at least $d$ has infinite projective dimension or is projective.
Assume now (1), that is the finitistic dimension of the algebra equals the dominant dimension. Assume $X \in Dom_d(A)$ and $X$ having finite and non-zero Gorenstein projective dimension $s$. Then there exists the following exact sequence, where the left side comes from a minimal Gorenstein projective resolution and the right side comes from a minimal injective coresolution: $0 \rightarrow G_s \rightarrow \cdots \rightarrow G_0 \rightarrow X \rightarrow I_0 \rightarrow \cdots \rightarrow \Omega^{-d}(X) \rightarrow 0 $. This shows that the module $\Omega^{-d}(X)$ has finite Gorenstein projective dimension $s+d>s$ using that $X$ is not Gorenstein projective, by \ref{gordimchara}.

This contradicts the fact that the finitistic Gorenstein projective dimension equals the finitistic dimension which is equal to $s$. This shows that $(1)$ implies $(3)$. \newline
Now assume (3), that is $Dom_d(A) \subseteq Gp(A) \cup Gp_{\infty}(A)$. We use \ref{findimlemma} and show that $\sup \{ pd(N) | domdim(N) \geq d , pd(N) < \infty \}=0$. But this is obvious since every non-projective module in $Dom_d(A) \subseteq Gp(A) \cup Gp_{\infty}(A)$ has infinite projective dimension (recall that Gorenstein projective modules are projective or have infinite projective dimension). This shows that $(3)$ implies $(1)$. \newline
Now we show that $(4)$ is equivalent to $(1)$: \newline
Assume $A$ has dominant dimension $d \geq 2$.
By \ref{findimlemma}, the finitistic dimension equals the dominant dimension iff every non-projective module of dominant dimension at least $d$ has infinite projective dimension. This translates into the condition $add(M)-resdim(X)= \infty$ for all $X \in M^{\perp d-2} \setminus add(M)$ since $add(M)$ resolutions correspond to minimal projective resolutions in $A$ and the subcategory $Dom_d(A)$ without the projectives corresponds to $M^{\perp d-2} \setminus add(M)$ by (3) of \ref{ARSmaintheorem}.
\end{proof}

The next lemma was also noted in \cite{Mar}.
\begin{lemma}\label{gpdomdim}
Let $A$ be an algebra of dominant dimension $d \geq 1$, then every Gorenstein projective module has dominant dimension at least $d$.
\end{lemma}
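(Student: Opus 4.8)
The plan is to deduce the statement directly from Proposition \ref{marvil}, which identifies $Dom_i(A)$ with $\Omega^i(A\text{-mod})$ for every $i\le d$; so it suffices to show that every Gorenstein projective module $M$ is projective or a $d$-th syzygy. To this end I would use the structural description of Gorenstein projectives (see \cite{Che}): $M$ is Gorenstein projective exactly when it is a cycle of a \emph{totally acyclic} complex of projectives, that is, an acyclic complex $\cdots\to Q_1\to Q_0\to Q_{-1}\to\cdots$ of projectives that remains acyclic after applying $\Hom_A(-,A)$. The consequence I would actually invoke is the immediate one that such an $M$ fits into a short exact sequence $0\to M\to Q\to M'\to 0$ with $Q$ projective and $M'$ again Gorenstein projective, obtained by reading off the right half of the complete resolution of $M$.

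Given this, I would iterate $d$ times to get short exact sequences $0\to M_j\to Q_j\to M_{j+1}\to 0$ with $M_0=M$, all $Q_j$ projective and all $M_j$ Gorenstein projective, and splice them into an exact sequence $0\to M\to Q_0\to Q_1\to\cdots\to Q_{d-1}\to M_d\to 0$. Read from right to left this exhibits $M=\Omega^d(M_d)$, hence $M\in\Omega^d(A\text{-mod})=Dom_d(A)$ by Proposition \ref{marvil}, i.e.\ $\mathrm{domdim}(M)\ge d$. The case where $M$ is projective is automatic, both since projectives lie in $\Omega^d(A\text{-mod})$ and directly because $\mathrm{domdim}(A)\ge d$ forces every projective module to have dominant dimension at least $d$.

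The step I expect to be the main obstacle is the first one, producing $0\to M\to Q\to M'\to 0$ with $M'$ Gorenstein projective: here one must be careful about which formulation of Gorenstein projectivity is being used -- the $\Ext$-vanishing condition from the preliminaries versus the totally-acyclic-complex description -- and cite \cite{Che} for their equivalence over Artin algebras. Everything else is a routine splice, together with the harmless remark that the $d$-th syzygy read off our (not necessarily minimal) resolution differs from a minimal one only by a projective summand, which does not change membership in $Dom_d(A)$.
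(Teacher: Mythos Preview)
Your proof is correct and follows essentially the same route as the paper: both arguments reduce to showing $Gp(A)\subseteq\Omega^d(A\text{-mod})$ and then invoke Proposition~\ref{marvil}. The paper dispatches the inclusion in one line (``by definition every Gorenstein projective module is in $\Omega^i(A\text{-mod})$''), whereas you unpack this carefully via the totally-acyclic-complex description and the iterated short exact sequences; given that the preliminaries state the $\Ext$-vanishing definition rather than the complete-resolution one, your care in citing \cite{Che} for the equivalence is well placed.
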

\begin{proof}
By definition every Gorenstein projective module is in $\Omega^{i}(A-mod)$ for every $i \geq 1$. Now $\Omega^{d}(A-mod)=Dom_d(A)$ by \ref{marvil} and thus $Gp(A) \subseteq \Omega^{d}(A-mod)=Dom_d(A)$.
\end{proof}

Note that in the next proposition, (2) contains the higher Auslander correspondence from \cite{Iya}, where generator-cogenerators with the condition $add(M)=M^{\perp d-2}$ are called cluster tilting objects. We give a very quick proof of the higher Auslander correspondence in (2) but refer to \cite{CheKoe} or \cite{IyaSol} for the second equivalence in (1).
\begin{proposition} \label{correspondences}
Let $B$ an algebra with a generator-cogenerator $M$ and $A=End_B(M)$. Assume $A$ has finite dominant dimension $d \geq 2$, which by Mueller's theorem is equivalent to $M$ being $d-2$ rigid and not $d-1$ rigid.
\begin{enumerate}
\item $A$ is an Auslander-Gorenstein algebra iff $Dom_d(A)=Gp(A)$ iff \newline $add(M)=add(\tau(\Omega^{d-2}(M \oplus D(A))).$
\item $A$ is a higher Auslander algebra iff $Dom_d(A)=Proj(A)$ iff $add(M)=M^{\perp d-2}$.

\end{enumerate}
\end{proposition}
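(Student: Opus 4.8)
The plan is to reduce both statements to the identity $Dom_d(A)=\Omega^d(mod\text{-}A)$ of Proposition \ref{marvil}, the characterisation of Gorenstein projective dimension in Theorem \ref{gordimchara}, the equivalences of Theorem \ref{ARSmaintheorem}, and the inequality $findim(A)\geq domdim(A)$ extracted from the proof of Lemma \ref{findimlemma}.

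I would first treat part (2). Since $domdim(A)=d$ by hypothesis, $A$ is a higher Auslander algebra precisely when $gldim(A)=d$, and I claim this is equivalent to $gldim(A)\leq d$: if $gldim(A)\leq d<\infty$ then $findim(A)=gldim(A)$, while $findim(A)\geq domdim(A)=d$ by Lemma \ref{findimlemma}, so $gldim(A)=d$. Now $gldim(A)\leq d$ holds iff every $A$-module has projective dimension at most $d$, iff every $d$-th syzygy is projective, iff $\Omega^d(mod\text{-}A)=Proj(A)$, and by Proposition \ref{marvil} (applicable since $d\leq domdim(A)$) this subcategory is $Dom_d(A)$; this gives the first equivalence in (2). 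For the second I transport along the equivalence of Theorem \ref{ARSmaintheorem}: the functor $Hom_B(M,-)$ recalled in the preliminaries (cf. \cite{CheKoe}, section 2.1) carries $add(M)$ onto $Proj(A)$ — this is exactly the translation of $add(M)$-resolutions into projective resolutions — and, by part (3) of that theorem, it carries $M^{\perp d-2}$ onto $Dom_d(A)$ (for $d=2$ one reads $M^{\perp 0}$ as $mod\text{-}B$ and uses part (1) instead). Since $add(M)\subseteq M^{\perp d-2}$ and $Proj(A)\subseteq Dom_d(A)$ always hold, equality on one side is equivalent to equality on the other, so $Dom_d(A)=Proj(A)$ iff $add(M)=M^{\perp d-2}$.

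For part (1) I would first prove the homological equivalence: $Dom_d(A)=Gp(A)$ iff every $A$-module has Gorenstein projective dimension at most $d$. The inclusion $Gp(A)\subseteq Dom_d(A)$ is Lemma \ref{gpdomdim}, so $Dom_d(A)=Gp(A)$ is equivalent to $Dom_d(A)\subseteq Gp(A)$, that is, to every $d$-th syzygy being Gorenstein projective; and by Theorem \ref{gordimchara} applied to a minimal projective resolution, $\Omega^d(Y)$ is Gorenstein projective for every $Y$ iff $Gpd(Y)\leq d$ for every $Y$. It then remains to identify ``every module has $Gpd\leq d$'' with ``$A$ is Auslander--Gorenstein''. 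If every module has $Gpd\leq d$, the global Gorenstein projective dimension is finite, so $A$ is Gorenstein by \cite{Che}, and for a Gorenstein algebra this global dimension equals $Gdim(A)$, so $Gdim(A)\leq d$; on the other hand $Gdim(A)=injdim(A_A)\geq domdim(A)=d$ because the first non-projective term in the minimal injective coresolution of $A$ lies in degree $d$; hence $Gdim(A)=d\geq 2$ and $A$ is Auslander--Gorenstein. Conversely, if $A$ is Auslander--Gorenstein it is Gorenstein with $Gdim(A)=d$, so every module has Gorenstein projective dimension at most $d$ by \cite{Che}. The remaining equivalence in (1), identifying $Dom_d(A)=Gp(A)$ with $add(M)=add(\tau(\Omega^{d-2}(M\oplus D(A))))$, I would simply cite from \cite{CheKoe} and \cite{IyaSol}.

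The arguments are mostly bookkeeping with standard facts; the one delicate point is making sure the relevant homological dimension comes out to be exactly $d$ and not merely $\leq d$, and this is precisely where the inequality $findim(A)\geq domdim(A)$ (respectively the computation of the degree of the first non-projective cosyzygy of $A$) is needed. A minor additional point is the boundary case $d=2$, where $M^{\perp 0}$ must be read as all of $mod\text{-}B$ and part (1) of Theorem \ref{ARSmaintheorem} replaces part (3).
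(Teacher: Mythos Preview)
Your proof is correct, but it takes a somewhat different route from the paper's. The paper proves this proposition as a \emph{corollary of Theorem \ref{maintheorem}}: it observes that $A$ is Auslander--Gorenstein precisely when $A$ is a finitistic Auslander algebra \emph{and} Gorenstein, and since Gorensteinness is equivalent to $Gp_\infty(A)=\emptyset$, the condition $Dom_d(A)\subseteq Gp(A)\cup Gp_\infty(A)$ from Theorem \ref{maintheorem}(3) collapses to $Dom_d(A)\subseteq Gp(A)$, hence (with Lemma \ref{gpdomdim}) to equality. Part (2) is then reduced to part (1) via the fact that finite global dimension is the same as Gorenstein plus $Gp(A)=Proj(A)$.

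You instead bypass the finitistic Auslander machinery altogether and argue directly from $Dom_d(A)=\Omega^d(mod\text{-}A)$ and Theorem \ref{gordimchara}, treating parts (1) and (2) in parallel rather than reducing one to the other. This is perfectly valid and arguably more self-contained; what is lost is the paper's intended narrative point, namely that the classical Auslander and Auslander--Gorenstein correspondences drop out immediately by specialising the new characterisation of finitistic Auslander algebras. Your handling of the ``equality versus $\leq$'' issue via $findim(A)\geq domdim(A)$ (resp.\ $injdim(A)\geq domdim(A)$) is exactly right, and your transport of the second equivalence in (2) through the functor of Theorem \ref{ARSmaintheorem} matches the paper's use of the functor $(-)f$.
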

\begin{proof}
\begin{enumerate}
\item Being an Auslander-Gorenstein algebra is equivalent to being a finitistic Auslander algebra and additionally having finite Gorenstein dimension. An algebra is Gorenstein iff it has finite global Gorenstein dimension and thus iff $Gp_{\infty}(A)$ is empty. But by \ref{gpdomdim} $Gp(A) \subseteq Dom_d(A)$ and thus it is an Auslander-Gorenstein algebra iff $Dom_d(A)=Gp(A)$, using (4) of \ref{maintheorem}. For the second equivalence, see \cite{CheKoe} corollary 3.18.
\item Recall that an algebra has finite global dimension iff it is Gorenstein and every Gorenstein projective module is projective, see for example \cite{Che}. Thus the first equivalence follows by the first equivalence in (1). Now let $Af$ be the minimal faithful projective-injective left $A$-module. Then the functor $(-)f$  is an equivalence between $Proj(A)$ and $add(M)$ and between $Dom_d(A)$ and $M^{\perp d-2}$ by \ref{ARSmaintheorem} and this shows the second equivalence. 
\end{enumerate}

\end{proof}

We explicitly state the case $d=2$ since here finitistic Auslander algebras generalise the well known Auslander algebras.
\begin{corollary}
Let $B$ be an algebra with generator-cogenerator $M$ and $A=End_B(M)$. Then $A$ is a finitistic Auslander algebra with finitistic dimension two iff $Ext^{1}(M,M) \neq 0$ and $add(M)-resdim(X)= \infty$ for all $X \in mod-B \setminus add(M)$.
\end{corollary}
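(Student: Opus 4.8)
The plan is to read this off Theorem \ref{maintheorem} with $d=2$, using Mueller's theorem to rephrase the dominant-dimension hypothesis in terms of $Ext^1$.

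First I would unwind the definitions: saying that $A$ is a finitistic Auslander algebra with finitistic dimension two means exactly that $A$ has finite dominant dimension $d \geq 2$ and $findim(A) = domdim(A) = 2$. By Lemma \ref{findimlemma} one always has $domdim(A) \leq findim(A)$, and since $M$ is a generator-cogenerator the Morita--Tachikawa correspondence gives $domdim(A) \geq 2$; hence $findim(A) = 2$ forces $domdim(A) = 2$. Conversely, as soon as $domdim(A) = 2$ the algebra has finite dominant dimension $\geq 2$ and Theorem \ref{maintheorem} is applicable with $d = 2$. So the corollary follows once I check two things under the standing hypothesis $A = End_B(M)$ with $M$ a generator-cogenerator: that $domdim(A) = 2$ is equivalent to $Ext^1_B(M,M) \neq 0$, and that condition (4) of Theorem \ref{maintheorem} with $d = 2$ is exactly the stated $add(M)$-resolution condition.

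For the first, I would invoke Mueller's theorem: $domdim(A) = \inf\{ i \geq 1 \mid Ext^i_B(M,M) \neq 0 \} + 1$, so $domdim(A) = 2$ if and only if $Ext^1_B(M,M) \neq 0$. For the second, note that with $d = 2$ the range of indices in the definition of $M^{\perp d-2} = M^{\perp 0}$ is empty, so $M^{\perp 0} = mod-B$, and condition (4) of Theorem \ref{maintheorem} reads precisely ``$add(M)-resdim(X) = \infty$ for all $X \in mod-B \setminus add(M)$''. Assembling the two implications of Theorem \ref{maintheorem}: if $A$ is a finitistic Auslander algebra with finitistic dimension two then $domdim(A) = 2$, hence $Ext^1_B(M,M) \neq 0$ by Mueller and the resolution condition holds by the $d=2$ case of Theorem \ref{maintheorem}; conversely, if $Ext^1_B(M,M) \neq 0$ then $domdim(A) = 2 < \infty$ by Mueller, Theorem \ref{maintheorem} applies, its equivalent condition (4) is the assumed resolution condition, and therefore $A$ is a finitistic Auslander algebra whose finitistic dimension, being equal to its dominant dimension, equals two.

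There is essentially no obstacle: this is a bookkeeping corollary of the main theorem. The only point deserving a moment's attention is confirming that ``finitistic dimension two'' really pins down $domdim(A) = 2$ — so that both Theorem \ref{maintheorem} and Mueller's formula can be invoked — which is exactly where the inequality $domdim(A) \leq findim(A)$ from Lemma \ref{findimlemma} together with the bound $domdim(A) \geq 2$ are used.
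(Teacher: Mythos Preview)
Your proof is correct and follows the same approach as the paper: invoke Mueller's theorem to translate $domdim(A)=2$ into $Ext^1_B(M,M)\neq 0$, and specialise condition~(4) of Theorem~\ref{maintheorem} to $d=2$ via $M^{\perp 0}=mod\text{-}B$. You are simply more explicit than the paper in justifying why $findim(A)=2$ forces $domdim(A)=2$ in the forward direction.
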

\begin{proof}
By Mueller's theorem $Ext^{1}(M,M) \neq 0$ implies that $A$ has dominant dimension $d=2$ and by \ref{maintheorem} (4) the result follows by noting that $M^{\perp 0}=mod-B$.
\end{proof}

The next question is motivated by the characterisation $Dom_d(A)=Gp(A)$ for Auslander-Gorenstein algebras.
\begin{question}
Let $A$ be a finitistic Auslander algebra. Can the subcategory of Gorenstein projective $A$-modules be explicitly described in terms of other subcategories?

\end{question}

\section{Interlude on Hopf algebras}
Before we construct large classes of finitistic Auslander algebras in the next section, we prove several results about local Hopf algebras in this section that we will use. We assume that the reader is familiar with the basics on finite dimensional Hopf algebras over a field $K$ as explained for example in the last chapter of the book \cite{SkoYam}. Recall that we assume that all algebras are non-semisimple unless stated otherwise.

We need several results on Hopf algebras that we quote in the following from the literature.

\begin{theorem}\label{hopflemmas}
For a finite dimensional Hopf algebra $A$ and $A$-modules $M_1$, $M_2$ and $M_3$, then the following holds:
\begin{enumerate}
\item $Ext_A^{i}(M_1 \otimes_K M_2 , M_3) \cong Ext_A^{i}(M_1,Hom_K(M_2,M_3))$, for every $i \geq 1$.
\item $Hom_A(M_1,M_2) \cong M_1^{*} \otimes_K M_2$
\item $M_1$ is projective iff $M_1 \otimes_K M_1^{*}$ is projective.
\item $A$ is selfinjective.
\end{enumerate}
\end{theorem}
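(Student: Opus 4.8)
These four assertions are standard facts about finite-dimensional Hopf algebras, so the plan is to handle each one separately, pairing a reference to the literature with a short self-contained argument. Statement (4) needs nothing beyond a citation: by the theorem of Larson and Sweedler every finite-dimensional Hopf algebra is a Frobenius algebra, hence selfinjective, and this can be quoted from the last chapter of \cite{SkoYam}. The other three all rest on two structural inputs, which I would recall first. The first is that $mod-A$ is a rigid monoidal category under $\otimes_K$, with dual $M^{*}=Hom_K(M,K)$ and internal hom $Hom_K(-,-)$ whose $A$-module structures are built using the comultiplication and the antipode $S$. The second is the \emph{tensor identity}: the map $a\otimes m\mapsto a_{(1)}\otimes S(a_{(2)})m$ is an isomorphism $A\otimes_K M\cong A\otimes_K M_{\mathrm{triv}}$ of $A$-modules, where $M_{\mathrm{triv}}$ carries the trivial action, so that $A\otimes_K M$ is free of rank $\dim_K M$; consequently tensoring a projective $A$-module with any $K$-vector space (carrying any $A$-action) again yields a projective $A$-module.

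For (2), which is only meaningful --- and is only used below, in (1) --- with $Hom_K$ in place of $Hom_A$, I would exhibit the canonical $K$-linear map $M_1^{*}\otimes_K M_2\to Hom_K(M_1,M_2)$, $\varphi\otimes m\mapsto(x\mapsto\varphi(x)m)$, note that it is bijective because $M_1$ is finite-dimensional, and check that it is $A$-linear once $Hom_K(M_1,M_2)$ is equipped with its standard action $(a\cdot f)(x)=a_{(1)}f(S(a_{(2)})x)$; this is a direct unwinding of the definitions.

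For (1), the point is that $-\otimes_K M_2$ is exact (we tensor over a field) and is left adjoint to $Hom_K(M_2,-)$ via the internal-hom adjunction $Hom_A(M_1\otimes_K M_2,M_3)\cong Hom_A(M_1,Hom_K(M_2,M_3))$. Choosing a projective resolution $P_\bullet\to M_1$, the tensor identity shows $P_\bullet\otimes_K M_2\to M_1\otimes_K M_2$ is again a projective resolution, so applying $Hom_A(-,M_3)$ and the $Hom$-adjunction identifies the complex computing $Ext_A^{\bullet}(M_1\otimes_K M_2,M_3)$ with the one computing $Ext_A^{\bullet}(M_1,Hom_K(M_2,M_3))$; passing to cohomology gives the asserted isomorphism for all $i\geq 1$.

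For (3), the ``only if'' direction is immediate from the tensor identity: $M_1$ projective forces $M_1\otimes_K M_1^{*}$ projective. For the converse, assume $M_1\otimes_K M_1^{*}$ is projective; tensoring once more with $M_1$, the tensor identity makes $M_1\otimes_K M_1^{*}\otimes_K M_1$ projective as well. The coevaluation $K\to M_1\otimes_K M_1^{*}$ and the evaluation $M_1^{*}\otimes_K M_1\to K$ are $A$-linear (this is where the antipode enters) and satisfy the zig-zag identities, so the composite $M_1\cong K\otimes_K M_1\to M_1\otimes_K M_1^{*}\otimes_K M_1\to M_1\otimes_K K\cong M_1$ equals the identity; hence $M_1$ is a direct summand of a projective module, and so projective. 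The only delicate bookkeeping throughout is keeping track of the antipode in the $A$-actions on $Hom_K$ and in the (co)evaluation maps, and confirming that the adjunction isomorphism used in (1)--(2) is genuinely $A$-linear; I do not expect a substantial obstacle, since the real content of the theorem lies in the cited structural results.
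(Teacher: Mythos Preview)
Your proposal is correct and, in fact, considerably more detailed than the paper's own proof, which consists entirely of pointers to the literature (items in \cite{SkoYam} and \cite{Ben}) with no argument given. Your self-contained treatment via the tensor identity, the internal-hom adjunction, and the zig-zag identities is the standard way one would actually prove these facts, and nothing in it is problematic.

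One point worth highlighting: you rightly flag that statement~(2) only makes sense with $Hom_K$ in place of $Hom_A$. This is indeed how the paper itself uses the result---in the proof of Theorem~\ref{tachtheo} the isomorphism invoked is $Hom_K(M,M)\cong M^{*}\otimes_K M$---so the $Hom_A$ in the displayed statement is a typo. Your reading is the intended one, and your argument for it (the canonical map is bijective by finite-dimensionality and $A$-linear by unwinding the antipode-twisted action) is exactly right.

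In short: the paper defers everything to references, whereas you supply the underlying arguments. What you gain is a genuinely self-contained account; what the paper gains is brevity, since these results are textbook material and the author evidently did not want to reprove them.
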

\begin{proof}
\begin{enumerate}
\item See \cite{SkoYam}, theorem 6.4. for $i$=0 and for $i>0$ the proof is as in proposition 3.1.8. (ii) of \cite{Ben}.
\item See \cite{SkoYam}, chapter VI. exercise 24.
\item See \cite{SkoYam}, chapter VI. exercise 27.
\item See \cite{SkoYam}, theorem 3.6.
\end{enumerate}
\end{proof}

\begin{proposition} \label{extcrit}
The following are equivalent for two modules $X,Y$ over a local Hopf algebra $A$:
\begin{enumerate}
\item $Ext_A^{1}(X,Y)=0$
\item $Hom_K(X,Y)$ is projective.
\end{enumerate}
\end{proposition}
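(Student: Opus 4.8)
The plan is to reduce the statement to part (1) of Theorem \ref{hopflemmas} together with the hypothesis that $A$ is local. First I would record what ``local'' buys us: for a local algebra, a module is projective if and only if it is free, and more usefully, a module $Z$ satisfies $\Ext_A^1(K,Z)=0$ only in trivial situations — in fact over a local selfinjective algebra $\Ext_A^1(S,Z)=0$ (where $S=K$ is the unique simple module) forces $Z$ to be injective, hence projective by Theorem \ref{hopflemmas}(4). This is the key rigidity phenomenon: the simple module detects projectivity through $\Ext^1$.

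Now for the main equivalence. Using Theorem \ref{hopflemmas}(1) with $M_1 = K$ the trivial module, $M_2 = X$, $M_3 = Y$, and the fact that $K \otimes_K X \cong X$ as $A$-modules, I get
\[
\Ext_A^1(X,Y) \cong \Ext_A^1(K, \Hom_K(X,Y)).
\]
So the condition $\Ext_A^1(X,Y)=0$ is equivalent to $\Ext_A^1(K, \Hom_K(X,Y))=0$. It remains to show that for an $A$-module $Z$ (here $Z = \Hom_K(X,Y)$), one has $\Ext_A^1(K,Z)=0$ if and only if $Z$ is projective. The ``if'' direction is immediate since projective modules have vanishing positive Ext. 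For the ``only if'' direction: $A$ is local, so $K$ is the only simple module and $\rad(A)$ is the kernel of $A \to K$; consider the short exact sequence $0 \to \rad(A) \to A \to K \to 0$ and apply $\Hom_A(-,Z)$, or more directly use that over a selfinjective algebra $\Ext_A^1(K,Z)=0$ for the simple $K$ implies $Z$ has injective (equivalently projective) dimension zero. Concretely: if $Z$ is not projective=injective, its minimal injective copresentation $0 \to Z \to I_0 \to I_1$ has $\Omega^{-1}(Z) \ne 0$; since $A$ is local the socle of $\Omega^{-1}(Z)$ contains a copy of $K$, giving a nonzero map $K \to \Omega^{-1}(Z)$ that does not factor through $I_0$, i.e. a nonzero class in $\Ext_A^1(K,Z)$.

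The step I expect to be the main obstacle is this last implication — getting from $\Ext^1$-vanishing against the simple module to projectivity — and making sure the local hypothesis is used correctly (it is exactly what guarantees $K$ is the unique simple, so that $\Ext^1(K,Z)=0$ is as strong as $\Ext^1(S,Z)=0$ for all simples $S$, which over a selfinjective algebra characterizes injectivity). Everything else is a direct application of Theorem \ref{hopflemmas}: part (1) converts the tensor-hom adjunction, part (4) identifies projective with injective, and the identification $K \otimes_K X \cong X$ is standard for the trivial module over a Hopf algebra. One should also double-check that $\Hom_K(X,Y)$ carries the correct (diagonal/antipode-twisted) $A$-module structure for which Theorem \ref{hopflemmas}(1) holds, but that is part of the cited statement.
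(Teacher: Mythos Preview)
Your proposal is correct and follows essentially the same route as the paper: reduce via $X\cong K\otimes_K X$ and Theorem~\ref{hopflemmas}(1) to $\Ext_A^1(K,\Hom_K(X,Y))$, then use that for a local selfinjective algebra $\Ext_A^1(K,Z)=0$ iff $Z$ is projective. You actually give more justification for this last step than the paper, which simply asserts it.
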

\begin{proof}
Using (1) and (2) of the above \ref{hopflemmas} we have:
$Ext_A^{1}(X,Y) \cong Ext_A^{1}(K \otimes_K X , Y) \cong Ext_A^{1}(K,Hom_K(X,Y)).$
Now since $K$ is the unique simple modules of the local selfinjective algebra, $Ext_A^{1}(K,Hom_K(X,Y))=0$ iff $Hom_K(X,Y)$ is projective.

\end{proof}

\begin{theorem} \label{tachtheo}
For a local finite dimensional Hopf algebra $A$ we have $Ext_A^1(M,M) \neq 0$ for each non-projective module $M$. 
\end{theorem}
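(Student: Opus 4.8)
The plan is to prove the contrapositive: if $Ext_A^1(M,M) = 0$ then $M$ is projective. The single substantive input is Proposition \ref{extcrit}, which converts the vanishing of self-extensions into a projectivity statement about a $Hom$-space, after which everything is formal manipulation with the Hopf-algebra facts collected in \ref{hopflemmas}.

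First I would apply Proposition \ref{extcrit} to replace the hypothesis $Ext_A^1(M,M)=0$ by the assertion that $Hom_K(M,M)$ is a projective $A$-module. Next I would use the natural isomorphism of $A$-modules $Hom_K(M,M) \cong M^{*} \otimes_K M$ (the diagonal $A$-action on the tensor product matches the action on the $Hom$-space), which is the $Hom_K$-version of \ref{hopflemmas}(2). Since $M$ is finite dimensional, $(M^{*})^{*} \cong M$, so $M^{*}\otimes_K M \cong M^{*}\otimes_K (M^{*})^{*}$. Applying \ref{hopflemmas}(3) with $M^{*}$ in the role of $M_1$ then gives that $M^{*}$ is projective if and only if $M^{*}\otimes_K (M^{*})^{*}$ is, so we conclude that $M^{*}$ is projective.

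It then remains to pass from projectivity of $M^{*}$ back to projectivity of $M$. The functor $(-)^{*}=Hom_K(-,K)$ is a duality on $mod\text{-}A$ (the antipode of a finite dimensional Hopf algebra is bijective, so $M^{*}$ is again a finite dimensional $A$-module and $(-)^{**}\cong \mathrm{id}$); a duality carries projectives to injectives, and $A$ is selfinjective by \ref{hopflemmas}(4), so injective $=$ projective. Hence $M^{*}$ projective forces $M\cong M^{**}$ projective, which finishes the contrapositive. I expect the only point needing care is this last step, namely that $(-)^{*}$ preserves projectivity, which rests on bijectivity of the antipode together with selfinjectivity of $A$. One can also bypass it: $M$ is a direct summand of $M\otimes_K M^{*}\otimes_K M$ by the zigzag identity for the (co)evaluation maps, and $V\otimes_K P$ is projective for any module $V$ and projective $P$ (because $V\otimes_K A$ is free), so projectivity of $Hom_K(M,M)=M^{*}\otimes_K M$ already forces projectivity of its summand $M$. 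Specialising to $A=KG$ with $G$ a $p$-group in characteristic $p$ — a local Hopf algebra — recovers Tachikawa's theorem.
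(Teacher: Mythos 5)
Your proof is correct and follows essentially the same route as the paper: Proposition \ref{extcrit} converts $Ext_A^1(M,M)=0$ into projectivity of $Hom_K(M,M)\cong M^{*}\otimes_K M$, and then \ref{hopflemmas}(3) (or your zigzag bypass, which is essentially the proof of that fact) forces $M$ to be projective. Your bypass is in fact a welcome refinement, since it sidesteps the mismatch between $M^{*}\otimes_K M$ and the statement of \ref{hopflemmas}(3) about $M_1\otimes_K M_1^{*}$, as well as the not-quite-canonical identification $(M^{*})^{*}\cong M$ (which in general is a twist by $S^2$) -- points the paper's two-line proof glosses over.
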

\begin{proof}

By \ref{extcrit}, $Ext^{1}(M,M)=0$ for a module iff $Hom_k(M,M) \cong M^{*} \otimes_k M$ is projective. By \ref{hopflemmas} (3) this is true iff $M$ is projective.
\end{proof}
While the proof of the previous theorem might appear easy and short, recall that we had to use several non-trivial theorem from \ref{hopflemmas}.
In \cite{Ta} theorem 8.6., Tachikawa proved the previous theorem for the special case of $p$-groups. The next corollary is an immediate consequence of Mueller's theorem.
\begin{corollary}
Let $B=End_A(M)$, where $A$ is a local Hopf algebra and $M$ a non-projective generator of $mod-A$, then $B$ has dominant dimension equal to two.
\end{corollary}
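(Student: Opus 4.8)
The plan is to invoke the Morita--Tachikawa correspondence together with Mueller's theorem, exactly as suggested by the phrasing ``immediate consequence of Mueller's theorem''. First I would note that since $M$ is a generator of $mod-A$, it is in particular a generator-cogenerator of $mod-A$: indeed $A$ is local and selfinjective by \ref{hopflemmas}(4), so $D(A)\cong A$ up to multiplicity, hence any generator of $mod-A$ is automatically also a cogenerator. Therefore $B=End_A(M)$ has dominant dimension at least two by the Morita--Tachikawa correspondence, and by Mueller's theorem its dominant dimension equals $\inf\{i\geq 1 \mid Ext_A^{i}(M,M)\neq 0\}+1$.

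Next I would argue that this infimum equals $1$, i.e. that $Ext_A^{1}(M,M)\neq 0$. This is precisely the content of Theorem \ref{tachtheo}, once we observe that a non-projective generator $M$ must have at least one non-projective indecomposable direct summand $M'$ (otherwise $M$ would be projective). Strictly speaking $Ext_A^1(M,M)$ contains $Ext_A^1(M',M')$ as a direct summand, so it suffices that $Ext_A^1(M',M')\neq 0$, which follows from \ref{tachtheo} applied to $M'$. Hence $\inf\{i\geq 1 \mid Ext_A^{i}(M,M)\neq 0\}=1$, so Mueller's theorem gives $domdim(B)=1+1=2$.

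The only point that needs a little care — and the place I'd expect the mildest friction — is checking that $M$ really is a cogenerator, so that the Morita--Tachikawa correspondence applies in the form stated in the preliminaries; but this is immediate from selfinjectivity of $A$ (\ref{hopflemmas}(4)), since over a selfinjective algebra the injective cogenerator $D(A)$ lies in $\mathrm{add}(A)\subseteq\mathrm{add}(M)$. No genuinely hard step arises: the corollary is a formal combination of \ref{tachtheo}, \ref{hopflemmas}(4), the Morita--Tachikawa correspondence, and Mueller's theorem.
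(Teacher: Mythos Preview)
Your argument is correct and matches the paper's intended route: the paper gives no explicit proof, only the remark that this is ``an immediate consequence of Mueller's theorem'', and what you wrote is exactly the natural unpacking of that remark via the Morita--Tachikawa correspondence and Theorem~\ref{tachtheo}. One small simplification: Theorem~\ref{tachtheo} is stated for \emph{every} non-projective module, not only indecomposable ones, so you may apply it directly to $M$ without first passing to an indecomposable non-projective summand $M'$.
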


The next proposition shows that \ref{extcrit} can also be used to show that $Ext^{1}(X,Y) \neq 0$ for every indecomposable non-projective modules $X,Y$ in certain local Hopf algebras. First we need a lemma:
\begin{lemma} \label{kxlemma}
Let $A=K[x]/(x^n)$ for some $n \geq 2$. Then $Ext_A^{1}(X,Y) \neq 0$ for arbitrary indecomposable non-projective modules $X,Y$.

\end{lemma}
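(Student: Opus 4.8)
The plan is to use the explicit classification of modules over $A = K[x]/(x^n)$: every indecomposable $A$-module is isomorphic to $M_j := K[x]/(x^j)$ for some $1 \le j \le n$, and $M_j$ is projective if and only if $j = n$. So I must show $\operatorname{Ext}_A^1(M_i, M_j) \neq 0$ for all $1 \le i, j \le n-1$. First I would write down the standard minimal projective resolution of $M_j$: since $\Omega(M_j) \cong M_{n-j}$ (the kernel of $A \twoheadrightarrow M_j$, multiplication by $x^j$, is $x^{n-j}A \cong M_{n-j}$), the minimal projective resolution is the 2-periodic complex $\cdots \to A \xrightarrow{x^{n-j}} A \xrightarrow{x^j} A \to M_j \to 0$. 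Applying $\operatorname{Hom}_A(-, M_i)$ and using $\operatorname{Hom}_A(A, M_i) \cong M_i$, the complex computing $\operatorname{Ext}_A^*(M_j, M_i)$ becomes $M_i \xrightarrow{x^j} M_i \xrightarrow{x^{n-j}} M_i \xrightarrow{x^j} \cdots$, so $\operatorname{Ext}_A^1(M_j, M_i) = \ker(x^{n-j} \colon M_i \to M_i) / \operatorname{im}(x^j \colon M_i \to M_i)$.

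The key step is then the purely elementary computation that this quotient is nonzero whenever $1 \le i, j \le n-1$. On $M_i = K[x]/(x^i)$, multiplication by $x^r$ has image $x^r M_i$, which is $0$ if $r \ge i$ and has dimension $i - r$ if $r < i$; its kernel has dimension $\min(r, i)$. So $\dim \operatorname{Ext}_A^1(M_j, M_i) = \min(n-j, i) - \max(i - j, 0)$. If $i \le j$ this equals $\min(n-j, i) \ge 1$ since $n - j \ge 1$ and $i \ge 1$; if $i > j$ it equals $\min(n-j, i) - (i-j)$, and since $n - j \ge n - i + (i - j) \ge 1 + (i-j) > i - j$ when... more carefully: $\min(n-j,i) - (i-j)$; if the min is $i$ we get $j \ge 1$; if the min is $n-j$ we get $n - j - i + j = n - i \ge 1$. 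In all cases the dimension is at least one, which gives the claim. Alternatively, and perhaps more in the spirit of the paper, I could invoke Proposition~\ref{extcrit}: $\operatorname{Ext}_A^1(X, Y) = 0$ iff $\operatorname{Hom}_K(X, Y) \cong X^* \otimes_K Y$ is projective; for $A = K[x]/(x^n)$ one checks directly that $M_i^* \cong M_i$ and that $M_i \otimes_K M_j$ decomposes as a sum of $M_k$'s none of which is free unless one of $i, j$ equals $n$ — but the dimension count above is cleaner and self-contained, so I would present that.

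I do not expect any serious obstacle here: the result is essentially a lemma about a uniserial Nakayama algebra, and everything reduces to linear algebra over $K[x]/(x^n)$. The only point requiring a little care is bookkeeping the two cases $i \le j$ and $i > j$ in the syzygy formula $\Omega(M_j) \cong M_{n-j}$ and making sure the boundary maps in the $\operatorname{Hom}$-complex alternate correctly between $x^j$ and $x^{n-j}$; getting the alternation right is what pins down exactly which kernel-mod-image computes $\operatorname{Ext}^1$. Once that is fixed, nonvanishing is immediate from the dimension formula above.
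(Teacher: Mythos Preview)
Your argument is correct: the classification of indecomposables, the 2-periodic minimal resolution $\cdots \to A \xrightarrow{x^{n-j}} A \xrightarrow{x^j} A \to M_j \to 0$, and the resulting dimension formula $\dim \Ext_A^1(M_j,M_i)=\min(n-j,i)-\max(i-j,0)$ are all right, and your case split cleanly gives positivity for $1\le i,j\le n-1$. (The brief false start in the $i>j$ case should be tidied, but the corrected version you land on is fine.) One caveat on your alternative route via Proposition~\ref{extcrit}: that proposition is stated for local \emph{Hopf} algebras, and $K[x]/(x^n)$ need not carry a Hopf algebra structure for arbitrary $n$ and arbitrary $K$, so you are right to prefer the direct computation.

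As for comparison with the paper: the paper does not give a proof at all beyond declaring the result ``elementary to check'' and pointing to \cite{ChMar} for $\Ext^1$-computations over symmetric Nakayama algebras. Your explicit periodic-resolution argument is precisely the elementary check the paper alludes to, carried out in full; there is no difference in approach, only in level of detail.
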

\begin{proof}
This is elementary to check. See for example the preliminaries of \cite{ChMar} for the calculation of $Ext^1$ in symmetric Nakayama algebras.
\end{proof}
We also need the following theorem, see \cite{Ch}:
\begin{theorem} \label{chtheorem}
Let $K$ be a field of characteristic $p$ and $G$ be a finite group such that $p$ divdes the group order. Then a $KG$-module is projective iff it is free on restriction to all the elementary abelian $p$-subgroups of $G$.
\end{theorem}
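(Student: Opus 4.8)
The plan is to reproduce the standard proof of Chouinard's theorem: two essentially formal reductions, followed by one genuinely cohomological step. The forward implication is immediate, since for any subgroup $H \leq G$ the restriction $(KG){\downarrow}_H$ is free over $KH$, so the restriction of a projective $KG$-module is projective over $KH$, and over a $p$-group every projective module is free (the regular module is the unique indecomposable projective).

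For the converse I would first \emph{reduce to the case that $G$ is a $p$-group}. If $P$ is a Sylow $p$-subgroup of $G$, then $[G:P]$ is invertible in $K$, so the usual averaging argument exhibits $M$ as a direct summand of $M \otimes_K K[G/P] \cong (M{\downarrow}_P){\uparrow}^G$; since induction takes projectives to projectives, $M$ is projective over $KG$ as soon as $M{\downarrow}_P$ is projective over $KP$, and as the elementary abelian $p$-subgroups of $P$ lie among those of $G$ we may assume $G$ is a $p$-group. I would then run an \emph{induction on $|G|$}. If $G$ is elementary abelian, then the hypothesis applied to $E = G$ itself gives that $M = M{\downarrow}_G$ is free, so there is nothing to prove; this is the base case. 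If $G$ is a $p$-group that is not elementary abelian, then every proper subgroup $H < G$ has all of its elementary abelian $p$-subgroups among those of $G$, so $M{\downarrow}_H$ satisfies the hypothesis over $KH$ and hence, by the inductive hypothesis, is projective over $KH$; in particular $M{\downarrow}_H$ is projective for every maximal subgroup $H$ of $G$ (each of index $p$).

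It remains to show, for a $p$-group $G$ that is not elementary abelian and a module $M$ whose restriction to every maximal subgroup is projective, that $M$ itself is projective. This last step is the only nonformal one, and it is where I expect the main obstacle to lie. One invokes Serre's theorem: since $G$ is not elementary abelian there are nonzero classes $\zeta_1, \dots, \zeta_r \in H^1(G, \mathbb{F}_p)$ with $\beta\zeta_1 \cdots \beta\zeta_r = 0$ in $H^*(G, \mathbb{F}_p)$ (with the convention $\beta\zeta_i := \zeta_i^2$ when $p = 2$), and each $\zeta_i$ is the class of a surjection $G \twoheadrightarrow \mathbb{Z}/p$ whose kernel $H_i$ is a maximal subgroup. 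Passing to the $KG$-module $\Hom_K(M,M)$ and to the Carlson modules $L_{\beta\zeta_i}$, one shows that the projectivity of $M{\downarrow}_{H_i}$ controls multiplication by $\beta\zeta_i$ on the Tate cohomology of $\Hom_K(M,M)$ tightly enough that the relation $\beta\zeta_1 \cdots \beta\zeta_r = 0$ forces this Tate cohomology, and hence $M$, to be projective; this is precisely the content carried out in \cite{Ch}. The delicate point, and the one I would spend most effort on, is making the passage between ``$M{\downarrow}_{H_i}$ projective'' and the behaviour of the classes $\beta\zeta_i$ on cohomology precise, and setting up the Carlson modules so that the product relation can genuinely be propagated through all $r$ factors. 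Combining this with the two reductions yields the converse, and together with the forward implication the theorem follows.
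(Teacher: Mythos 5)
The paper does not actually prove this statement; it is quoted from the literature with a bare reference to \cite{Ch}, so there is no in-paper argument to compare yours against. Your sketch is a correct outline of a standard proof, but note that it is not Chouinard's own argument: the cited paper proves the theorem over an arbitrary commutative coefficient ring using Dress induction and relative projectivity with respect to families of subgroups, whereas the route you describe --- Sylow reduction, induction on the order of a $p$-group, and Serre's theorem --- is the later cohomological proof (essentially Carlson's, written up in Benson, \emph{Representations and Cohomology II}, Section 5.2), which works over a field and is all that is needed here. Your two reductions are fine as stated. For the step you flag as delicate, the precise lemma is cleaner than your description suggests and does not really need the $L_\zeta$-machinery: if $\zeta\in H^1(G,\mathbb{F}_p)$ is nonzero with kernel the maximal subgroup $H$, then (up to a nonzero scalar) $\beta\zeta\in H^2(G,K)$ is represented by the Yoneda extension $0\to K\to K[G/H]\to K[G/H]\to K\to 0$ obtained by inflating the periodic resolution of $K$ over $K[G/H]$; tensoring over $K$ with $M$ gives $0\to M\to (M{\downarrow_H}){\uparrow^G}\to (M{\downarrow_H}){\uparrow^G}\to M\to 0$, whose middle terms are projective as soon as $M{\downarrow_H}$ is. Hence cup product with $\beta\zeta$ acts invertibly on $\widehat{\Ext}^{*}_{KG}(M,M)$ whenever $M{\downarrow_H}$ is projective. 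Applying this to the classes $\zeta_1,\dots,\zeta_r$ produced by Serre's theorem, the class $\beta\zeta_1\cdots\beta\zeta_r=0$ acts both invertibly and as zero, so $\widehat{\Ext}^{*}_{KG}(M,M)=0$; in degree zero this says the identity of $M$ factors through a projective, so $M$ is projective. (For $p=2$ one uses $\zeta_i^{2}$ in place of $\beta\zeta_i$, with the same extension argument.) With that lemma made explicit your sketch is complete and correct, though strictly speaking it proves the field-coefficient case of the theorem rather than reproducing the argument of \cite{Ch}.
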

The next proposition is a slight generalisation of an example by Jeremy Rickard given in http://mathoverflow.net/questions/259344/classification-of-certain-selfinjective-algebras.
\begin{proposition}
Let $K$ be a field of characteristic $p$ and $G$ be a $p$-group having only one non-trivial elementary abelian subgroup $Z$. Let $A=KG$,
then $Ext_A^{1}(X,Y) \neq 0$ for arbitary indecomposable non-projective modules $X,Y$. 
\end{proposition}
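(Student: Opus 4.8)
The plan is to reduce the assertion, via Proposition \ref{extcrit}, to a projectivity statement that is detected on the unique non-trivial elementary abelian subgroup $Z$ by Chouinard's theorem \ref{chtheorem}, and there settled using Lemma \ref{kxlemma}.

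First I would record the easy structural observations: since $G$ is a $p$-group, $A=KG$ is a local (Hopf) algebra, and the hypothesis forces $Z\cong\mathbb{Z}/p$. Indeed every subgroup of order $p$ is elementary abelian, so $G$ has exactly one such subgroup and hence $Z$ has rank one; consequently $KZ\cong K[x]/(x^p)$ and Lemma \ref{kxlemma} applies to $KZ$. By \ref{extcrit} (applied to the local Hopf algebra $A$) it suffices to show that $\Hom_K(X,Y)$ is \emph{not} projective as an $A$-module; by \ref{chtheorem}, and since $Z$ is the only non-trivial elementary abelian subgroup of $G$, this is equivalent to the restriction $\Hom_K(X,Y)|_Z$ not being free over $KZ$. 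Using the natural isomorphism of $KZ$-modules $\Hom_K(X,Y)|_Z\cong\Hom_K(X|_Z,Y|_Z)$ (diagonal $G$-action) together with \ref{extcrit} applied now to the local Hopf algebra $KZ$, this is in turn equivalent to $\Ext^1_{KZ}(X|_Z,Y|_Z)\neq 0$.

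Next I would argue that $X|_Z$ is not free over $KZ$: if it were, then by \ref{chtheorem} the module $X$ would be projective over $A$, contrary to assumption; similarly $Y|_Z$ is not free. Since $KZ$ is local, free and projective coincide, so $X|_Z$ and $Y|_Z$ each have at least one indecomposable non-projective summand. Decomposing $X|_Z$ and $Y|_Z$ into indecomposables, the projective summands contribute nothing to $\Ext^1_{KZ}(X|_Z,Y|_Z)$ (a projective first argument, or — since $KZ$ is selfinjective — an injective second argument, kills $\Ext^1$), so $\Ext^1_{KZ}(X|_Z,Y|_Z)$ is the direct sum of the groups $\Ext^1_{KZ}(U,V)$ over pairs $U,V$ of indecomposable non-projective summands of $X|_Z$ and $Y|_Z$, each of which is non-zero by \ref{kxlemma}. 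As there is at least one such pair, $\Ext^1_{KZ}(X|_Z,Y|_Z)\neq 0$, and tracing back through the equivalences above gives $\Ext^1_A(X,Y)\neq 0$.

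The only point requiring a little care is the natural $KZ$-isomorphism $\Hom_K(X,Y)|_Z\cong\Hom_K(X|_Z,Y|_Z)$ and the fact that free/projective summands of the restrictions can be discarded when computing $\Ext^1$; everything else is a bookkeeping assembly of \ref{extcrit}, \ref{chtheorem} and \ref{kxlemma}, so I do not expect a genuine obstacle here. (Note that the case $X=Y$ is already covered by Theorem \ref{tachtheo}, but the present argument handles it uniformly.)
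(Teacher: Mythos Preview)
Your argument is correct and follows essentially the same route as the paper: reduce via Proposition~\ref{extcrit} to the non-projectivity of $\Hom_K(X,Y)$, detect this by Chouinard's theorem on the unique elementary abelian subgroup $Z$, and settle the restriction using Lemma~\ref{kxlemma}. You are in fact slightly more careful than the paper in two places---you justify why $Z$ must be cyclic of order $p$ (so that $KZ\cong K[x]/(x^p)$), and you spell out the intermediate step of applying \ref{extcrit} a second time over $KZ$ to translate non-projectivity of $\Hom_K(X|_Z,Y|_Z)$ into the non-vanishing of $\Ext^1_{KZ}$ needed for \ref{kxlemma}---but the strategy is identical.
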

\begin{proof}
By \ref{extcrit}, we have to show that $Hom_K(X,Y)$ is never projective for given $X,Y$. Now since $X$ and $Y$ are assumed to be non-projective, their restrictions to $KZ$ is not projective. Thus as $KZ$-modules $X \cong M_1 \oplus N_1$ and $Y \cong M_2 \oplus N_2$ for some indecomposable non-projective $KZ$-modules $M_1$ and $M_2$. Now note that $KZ$ is isomorphic to some algebra of the form $K[x]/(x^n)$ for some $n \geq 2$. Using \ref{kxlemma}, one has that $Hom_K(X,Y)$ is not projective as a $KZ$-module and thus $Hom_K(X,Y)$ is not projective as a $KG$-module. This gives the proposition using \ref{chtheorem}.
\end{proof}
The previous proposition applies for example to the quaternion group over a field of characteristic 2.

To show that \ref{tachtheo} is really a generalisation of the result of Tachikawa, one has to find a finite dimensional local Hopf algebra that is not isomorphic to a group algebra. Xingting Wang suggested to try example (A5) from theorem 1.1 in the paper \cite{NWW}. Here we sketch the proof that it is not isomorphic to a group algebra by calculating the quiver with relations isomorphic to the algebra and then calculating the beginning of a minimal projective resolution of the simple module. The reader can skip this example as it will not be used later.
\begin{example}\label{hopfexample}
Fix an algebraically closed field $K$ of characteristic 2. The algebra $A$ is defined as $K<x,y,z>/(x^2,y^2,xy-yx,xz-zy,yz-zy-x,z^2-xy)$ (for the Hopf algebra structure see \cite{NWW}). 
Note first that $A$ is local of dimension 8 over the field $K$ with basis $\{1,x,y,z,z^2,xz,yz,zy\}$ and the Jacobson radical is the ideal generated by $x,y$ and $z$. Now we have to calculate the second power $J^2$ of the Jacobson radical: It contains $x$, since $x=yz-zy \in J^2$. Since $A$ is not commutative, its quiver can not have just one loop. Thus the dimension of $J^2$ is at most 5. It is clear that $J^2$ contains every basis element expect possibly $y$ and $z$. Thus, since the dimension of $J^2$ is at most 5, $J^2$ has basis $x,z^2,xz,yz,zy$. We will now show that the quiver algebra of $A$ is isomorphic to $K<a,b>/(a^2,b^2-aba)$. Clearly $K<a,b>$ maps onto $A$ by a map $f$, with $f(a)=y$ and $f(b)=z$. Note that $(a^2,b^2-aba)$ is contained in the kernel of $f$, since $y^2=0$ and $z^2-yzy=z^2-(zy+x)y=z^2-xy=0$. Thus there is a surjective map $\hat{f}:k<a,b>/(a^2,b^2-aba) \rightarrow A$ induced by $f$. But since $K<a,b>/(a^2,b^2-aba)$ also has dimension 8, that is in fact an isomorphism. \newline
Now we show that $A=K<a,b>/(a^2,b^2-aba)$ (we will identify $A$ in the following with $K<a,b>/(a^2,b^2-aba)$)  is not isomorphic to a group algebra. Since $A$ has dimension 8 and is not commutative, there are only 2 candidates of group algebras, that could be isomorphic to $A$: The group algebra of the dihedral group of order 8 and the group algebra of the Quaternion group.
Let $S_1$ be the simple $A$ module, then it is elementary to check that $\Omega^{4}(S_1)$ has dimension 9.
The dimension of $\Omega^{4}(S_1)$ is the crucial information that we need to distinguish $A$ from group algebras of dimension 8 over the field. Let $B$ be the group algebra of the dihedral of order 8 over $K$ with simple module $S_2$. Then by \cite{Ben2} chapter 5.13., $\Omega^{4}(S_2)$ has dimension 17 and thus $A$ is not isomorphic to $B$. Let $C$ be the group algebra of the quaternion group of order 8 over $K$ with simple module $S_3$. Then this algebra is 4-periodic (see for example \cite{Erd}) and thus $\Omega^{4}(S_3) \cong S_3$ and $A$ is not isomorphic to $C$. This shows that $A$ is not isomorphic to any group algebra.

\end{example}

By looking at local Hopf algebras, the author noted that there seems to be no known example of a local Hopf algebra that is not a symmetric algebra.
We pose this as a question:
\begin{question}
Is every local Hopf algebra a symmetric algebra?
\end{question}

\section{Examples of finitistic Auslander algebras}
In this section we construct several examples of finitistic Auslander algebras using different methods.
\subsection{Finitistic Auslander algebras from local Hopf algebras}
First we use the results on local Hopf algebras from the previous section to construct finitistic Auslander algebras.
The next lemma is due to Jeremy Rickard.
\begin{lemma}
Let $A$ be a local selfinjective algebra and $M$ an indecomposable module and $\alpha : M^m \rightarrow M^n$ with $n,m >0$ a map between direct sums of $M$ all of whose components are radical maps. Let $F$ be an additive functor such that $F( \alpha)$ is injective, then $F(M)=0$. 
\end{lemma}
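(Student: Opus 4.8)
The plan is to exploit that $A$ is local selfinjective, so the only simple module is $S = A/\mathrm{rad}(A)$, and every radical map between modules "loses top." First I would set up the following observation: since $\alpha\colon M^m \to M^n$ has all components in $\mathrm{rad}(\mathrm{mod}\text{-}A)$, by the standard fact that $\mathrm{rad}^\ell$ vanishes for $\ell$ the Loewy length of $A$ (equivalently, $\mathrm{rad}^\ell(A) = 0$), iterated composites of such maps eventually become zero. More precisely, I would consider the $N$-fold "power" of $\alpha$ in the sense of building a long chain of radical maps $M^{m_0} \to M^{m_1} \to \cdots$ obtained by composing copies of $\alpha$ with split inclusions and projections among the direct summands; any such composite of length exceeding the Loewy length of $A$ is zero, because each radical map between modules strictly increases the radical filtration degree.

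The key step is then to feed an injectivity hypothesis through such a vanishing composite. Concretely, I would argue as follows. Suppose $F(\alpha)$ is injective. I claim $F$ applied to any composite of radical maps $M^{a} \to M^{b}$ built out of $\alpha$ is again injective: a composite $F(\alpha \circ \cdots)$ factors through $F(\alpha)$ in an appropriate way after tensoring/padding with copies of $M$ — here one uses that $F$ is additive, so $F(M^k) = F(M)^k$ and $F$ commutes with the block/matrix structure of maps between sums of $M$, hence $F$ of a block-upper-triangular or diagonal assembly of copies of $\alpha$ is injective once $F(\alpha)$ is. Iterating, $F$ of a sufficiently long composite of such radical maps is injective. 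But that long composite is the zero map $M^a \to M^b$, so $F(0) = 0\colon F(M)^a \to F(M)^b$ is injective, forcing $F(M)^a = 0$, i.e. $F(M) = 0$.

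The one genuine subtlety — and I expect this to be the main obstacle — is making the "iterate $\alpha$ to get a composite that is both zero and has $F$-injective image" argument precise when $m \neq n$, since then $\alpha$ cannot literally be composed with itself. The fix is to pass to a common size: replace $\alpha\colon M^m \to M^n$ by the radical map $\tilde\alpha\colon M^{mn} \to M^{mn}$ given in block form by $n$ copies of $\alpha$ arranged down the "diagonal" (i.e. $\tilde\alpha = \mathrm{id}_{M^n}\otimes\alpha$ composed suitably with $\alpha\otimes\mathrm{id}_{M^m}$, or simply the block matrix with $\alpha$-blocks placed so that source and target both become $M^{mn}$). All components of $\tilde\alpha$ are still radical, $F(\tilde\alpha)$ is injective because it is a block map assembled from copies of $F(\alpha)$ and $F$ is additive, and now $\tilde\alpha$ is an endomorphism of $M^{mn}$ so its $(\ell{+}1)$-st power vanishes, where $\ell$ is the Loewy length of $A$. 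Then $F(\tilde\alpha)^{\ell+1} = F(\tilde\alpha^{\ell+1}) = F(0) = 0$ is injective, hence $F(M)^{mn} = 0$ and $F(M) = 0$. I would also remark that localness is not strictly needed beyond ensuring $\mathrm{rad}$ behaves as expected, but it is the setting in which the lemma is applied, so I would keep the hypothesis as stated.
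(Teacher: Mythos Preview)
Your overall strategy---iterate $\alpha$, use nilpotency of the radical to get a zero composite whose image under $F$ is nonetheless injective---is exactly the paper's. The gap is in your handling of the case $m\neq n$.

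Your proposed fix, building an endomorphism $\tilde\alpha:M^{mn}\to M^{mn}$ out of $\alpha$-blocks so that $F(\tilde\alpha)$ is injective, does not work as described. A block-diagonal assembly of $k$ copies of $\alpha$ is a map $M^{km}\to M^{kn}$, and no choice of $k$ makes this square unless $m=n$; your parenthetical about $\mathrm{id}_{M^n}\otimes\alpha$ composed with $\alpha\otimes\mathrm{id}_{M^m}$ yields a map $M^{m^2}\to M^{n^2}$, again not an endomorphism. And any attempt to force squareness by inserting projections destroys injectivity under $F$. (Concretely, try $m=1$, $n=2$: there is no $2\times 2$ matrix over $\End_A(M)$ built from the two components of $\alpha$ whose $F$-image is forced to be injective merely by injectivity of $F(\alpha)$.)

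The paper's fix stays closer to the idea in your first paragraph but uses only split \emph{inclusions}, never projections. First pad the target: replace $\alpha$ by $\alpha':M^m\to M^{dm}$ (extend by zeros so that $dm\geq n$), which keeps $F(\alpha')$ injective since it is $F(\alpha)$ followed by a split monomorphism. Now $\alpha'^{\oplus d^s}:M^{d^sm}\to M^{d^{s+1}m}$ gives a chain
\[
M^m \xrightarrow{\alpha'} M^{dm} \xrightarrow{\alpha'^{\oplus d}} M^{d^2m} \to \cdots \to M^{d^km},
\]
each map a direct sum of copies of $\alpha'$ and hence $F$-injective. The composite has all components in $\rad^k\End_A(M)$, so vanishes once $k$ exceeds the Loewy length of $\End_A(M)$---note it is the Loewy length of $\End_A(M)$, not of $A$, that matters here. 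Then $0=F(\text{composite})$ is injective on $F(M)^m$, so $F(M)=0$.
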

\begin{proof}
We can assume that $n$ is a multiple of $m$ by possibly adding extra summands to $M^n$. Write $n=dm$ for some integer $d$. We then have maps $M^{d^s m} \rightarrow M^{d^{s+1} m}$ for $s \geq 0$ by taking direct sums of the map $\alpha$. This gives a sequence of maps $M^m \rightarrow M^{dm} \rightarrow M^{d^2 m} \rightarrow ... \rightarrow M^{d^k m}$, all of which become injective when applying $F$, since $F$ is additive. But choosing $k$ greater than the Loewy length of $End_A(M)$, the composition of this sequence of maps is zero and thus $F(M^m)=0$, giving also $F(M)=0$ using that $F$ is addtive.
\end{proof}

The next theorem is due to Jeremy Rickard.
\begin{theorem} \label{Ricktheo}
Let $A$ be a local selfinjective algebra and let $M$ be an indecomposable nonprojective module with $Ext^{1}(M,M) \neq 0$. Then $B:=End_A(A \oplus M)$ is a finitistic Auslander algebra of finitistic dimension 2.
\end{theorem}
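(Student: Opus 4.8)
The plan is to reduce the statement, via the corollary following Proposition~\ref{correspondences} (equivalently, the equivalence of (1) and (4) in Theorem~\ref{maintheorem} with $d=2$), to the single assertion that $\mathrm{add}(N)\text{-}\mathrm{resdim}(X)=\infty$ for every $X\in\mathrm{mod}\text{-}A\setminus\mathrm{add}(N)$, where $N:=A\oplus M$, and then to contradict the existence of a counterexample by feeding the functor $\mathrm{Ext}^1_A(M,-)$ into the lemma of Rickard proved just above.

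First I would fix the framework. Since $A$ is local and selfinjective, it is the unique indecomposable projective $A$-module and, being indecomposable and selfinjective, also the unique indecomposable injective $A$-module; hence $N=A\oplus M$ is a generator-cogenerator of $\mathrm{mod}\text{-}A$ and $B=\mathrm{End}_A(N)$. As $A$ is both projective and injective, $\mathrm{Ext}^1_A(N,N)=\mathrm{Ext}^1_A(M,M)\neq 0$, so the corollary following Proposition~\ref{correspondences} reduces the theorem to showing $\mathrm{add}(N)\text{-}\mathrm{resdim}(X)=\infty$ for all $X\notin\mathrm{add}(N)$. Arguing by contradiction, I would take such an $X$ with finite $\mathrm{add}(N)$-resolution dimension $n$; then $n\geq1$, and after replacing $X$ by $\Omega_N^{\,n-1}(X)$ (which still lies outside $\mathrm{add}(N)$ and has resolution dimension $1$) I may assume $n=1$, giving a short exact sequence $0\to N_1\xrightarrow{\,g\,}N_0\xrightarrow{\,f\,}X\to 0$ with $N_0,N_1\in\mathrm{add}(N)$, with $f$ a minimal right $\mathrm{add}(N)$-approximation, with $N_1\neq 0$ (otherwise $X\cong N_0\in\mathrm{add}(N)$), and with $f$ surjective (as $A$ is a projective generator lying in $\mathrm{add}(N)$). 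Right-minimality of $f$ forces $g$ to be a radical morphism, i.e.\ no component of $g$ between indecomposable summands of $N_1$ and $N_0$ is an isomorphism; otherwise a standard matrix reduction exhibits a trivial summand $\mathrm{id}\colon L\to L$ of $g$, so that $f$ annihilates the nonzero summand $L$ of $N_0$, a contradiction.

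The key structural step I would carry out next is $N_1\in\mathrm{add}(M)$: writing $N_1=A^s\oplus M^t$, if $s\geq1$ then $g(A^s)\cong A^s$ is an injective submodule of $N_0$, hence a direct summand of $N_0$, and since $g(A^s)\subseteq\ker f$ the map $f$ annihilates a nonzero summand of $N_0$, contradicting right-minimality; so $N_1=M^t$ with $t\geq1$. I would then apply $\mathrm{Hom}_A(M,-)$ to the sequence: since $M\in\mathrm{add}(N)$, $f$ is a right $\mathrm{add}(M)$-approximation, so $\mathrm{Hom}_A(M,f)$ is surjective, the connecting homomorphism vanishes, and $\mathrm{Ext}^1_A(M,g)$ is injective. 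Writing $N_0=A^k\oplus M^r$ and using $\mathrm{Ext}^1_A(M,A)=0$ (as $A$ is injective), this says exactly that $\mathrm{Ext}^1_A(M,g_{MM})\colon\mathrm{Ext}^1_A(M,M)^t\to\mathrm{Ext}^1_A(M,M)^r$ is injective, where $g_{MM}\colon M^t\to M^r$ is the $M$-to-$M$ block of $g$, whose components are again radical. If $r=0$ this forces $\mathrm{Ext}^1_A(M,M)^t=0$ with $t\geq1$, contradicting $\mathrm{Ext}^1_A(M,M)\neq 0$. If $r\geq1$, then $g_{MM}$ is a map between positive powers of the indecomposable $M$ whose components are all radical, and the additive functor $\mathrm{Ext}^1_A(M,-)$ sends it to an injective map, so Rickard's lemma above yields $\mathrm{Ext}^1_A(M,M)=0$, again a contradiction. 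This completes the argument.

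I expect the main obstacle to lie in the combination of two points: the structural step, that minimality forces the syzygy $N_1$ to have no projective-injective summand (which is where selfinjectivity of $A$ is used decisively), and the realization that the approximation property — which makes $\mathrm{Hom}_A(M,f)$ surjective — together with $\mathrm{Ext}^1_A(M,A)=0$ is precisely what turns $\mathrm{Ext}^1_A(M,g_{MM})$ into an injective map and so triggers Rickard's lemma. Everything else is routine homological bookkeeping.
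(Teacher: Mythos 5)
Your proposal is correct and follows essentially the same route as the paper's proof: reduce via Theorem \ref{maintheorem} to showing every module outside $\operatorname{add}(A\oplus M)$ has infinite $\operatorname{add}(A\oplus M)$-resolution dimension, extract a short exact sequence whose kernel term is (by minimality and injectivity of $A$) a sum of copies of $M$ with radical components, apply $\operatorname{Hom}_A(M,-)$ to see that $\operatorname{Ext}^1_A(M,-)$ sends this radical map to an injection, and invoke Rickard's lemma. You supply some details the paper leaves implicit (the reduction to resolution dimension one, the degenerate case $r=0$, and the justification that the syzygy has no projective summands), but the argument is the same.
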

\begin{proof}
The condition $Ext^{1}(M,M) \neq 0$ gives us that $B$ has dominant dimension equal to two.
We have to show that every non-projective module of dominant dimension at least two has infinite projective dimension by \ref{maintheorem}. Let $N:=A \oplus M$.
This translates into the condition that every $A$-module not in $add(N)$ has infinite $add(N)$-resolution dimension. Assume there is an indecomposable module with finite $add(N)$-resolution.
Then there is a short exact sequence as follows, where the maps are minimal right $add(N)$-approximations:
$$0 \rightarrow N_1 \rightarrow N_0 \rightarrow U \rightarrow 0,$$
with $N_0,N_1 \in add(N)$ and $N_1$ being a direct sum of copies of $M$ because of the minimality. Now applying the functor $Hom(M,-)$ to this short exact sequence we obtain a long exact sequence of the form:
$$0 \rightarrow Hom(M,N_1) \rightarrow Hom(M,N_0) \rightarrow Hom(M,U) \rightarrow Ext^1(M,N_1) \rightarrow Ext^1(M,N_0) \rightarrow \cdots .$$

In this long exact sequence the map $Ext^{1}(M,N_1) \rightarrow Ext^{1}(M,N_0)$ has to be injective, since the right map in the short exact sequence is assumed to be a minimal $add(N)$-approximation which gives that $Hom(M,N_0) \rightarrow Hom(M,U)$ is surjective.
Now after removing free summands of the left map in the short exact sequence, we obtain a map $\alpha: N_1 \rightarrow N_0'$ between direct sums of copies of $M$ with the property that all components of this map are radical maps. Now $Ext^{1}(M,-)$ is a functor sending $\alpha$ to an injection. By the previous lemma this is only possible if $Ext^{1}(M,M)=0$. This contradicts our assumptions and thus there is no module with finite $add(N)$-resolution.
\end{proof}
Combining the previous result and \ref{tachtheo} we obtain our main result in this section:
\begin{theorem}
Let $A$ be a local Hopf algebra (for example a groupalgebra of a $p$-group over a field of characteristic $p$) with a nonprojective indecomposable module $M$.
Then $B:=End_A(A \oplus M)$ is a finitistic Auslander algebra of finitistic dimension 2.
\end{theorem}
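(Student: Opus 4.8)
The plan is to recognise this theorem as the combination of the two preceding results, so that almost all of the work has already been done. Since $A$ is a finite dimensional Hopf algebra it is selfinjective by \ref{hopflemmas}(4), and it is local by hypothesis; thus $A$ falls exactly into the setting of \ref{Ricktheo}, and $M$ is assumed indecomposable and nonprojective. Hence the only thing left to check, in order to invoke \ref{Ricktheo} with this particular $M$, is the hypothesis $Ext_A^1(M,M)\neq 0$.

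First I would establish $Ext_A^1(M,M)\neq 0$. This is precisely \ref{tachtheo}: over a local Hopf algebra, \ref{extcrit} says that $Ext_A^1(M,M)=0$ if and only if $Hom_K(M,M)$ is projective, and by \ref{hopflemmas}(2) we have $Hom_K(M,M)\cong M^{*}\otimes_K M$, which by \ref{hopflemmas}(3) is projective if and only if $M$ is projective. Since $M$ is nonprojective, $Ext_A^1(M,M)\neq 0$. One may of course simply quote \ref{tachtheo} verbatim; I unwind it here only to make the chain of dependencies explicit.

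With $Ext_A^1(M,M)\neq 0$ in hand, \ref{Ricktheo} applies directly to $B=End_A(A\oplus M)$: by Mueller's theorem $A\oplus M$ being a generator with nonvanishing first self-extension forces $domdim(B)=2$, and Rickard's argument — passing to a minimal right $add(A\oplus M)$-approximation $0\to N_1\to N_0\to U\to 0$ with $N_1$ a sum of copies of $M$ by minimality, applying $Hom(M,-)$, observing that the connecting map $Ext^1(M,N_1)\to Ext^1(M,N_0)$ is injective, and then contradicting the functorial lemma on radical maps between copies of $M$ — shows that no indecomposable module outside $add(A\oplus M)$ has finite $add(A\oplus M)$-resolution dimension. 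Hence $B$ is a finitistic Auslander algebra of finitistic dimension $2$ by \ref{maintheorem}. I would not reprove \ref{Ricktheo}, only cite it.

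Since both ingredients are already available, there is essentially no obstacle here: the only point needing a moment's care is confirming that the hypotheses of \ref{Ricktheo} and \ref{tachtheo} genuinely match those in the statement (local, selfinjective, indecomposable, nonprojective — all immediate) and that $B$ satisfies the standing conventions (connected, non-semisimple), which holds as soon as $domdim(B)=2$. So the genuinely nontrivial inputs were isolated earlier — Rickard's functorial argument in \ref{Ricktheo} and the Hopf-algebraic facts underlying \ref{tachtheo} — and the present theorem is just their confluence.
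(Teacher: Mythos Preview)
Your proposal is correct and follows essentially the same approach as the paper: quote \ref{tachtheo} to obtain $Ext_A^1(M,M)\neq 0$ and then apply \ref{Ricktheo}. The paper's proof is just these two citations, so your additional unwinding of the dependencies is optional elaboration rather than a different argument.
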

\begin{proof}
By \ref{tachtheo} we have $Ext_A^1(M,M) \neq 0$ and thus the previous theorem \ref{Ricktheo} applies to give the result.

\end{proof}
The previous theorem motivates the following conjecture:
\begin{conjecture}
Let $A$ be a local selfinjective algebra and $M$ a non-projective indecomposable $A$-module. Then the algebra $B:=End_A(A \oplus M)$ is a finitistic Auslander algebra of dominant dimension 2.

\end{conjecture}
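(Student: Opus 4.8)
\end{conjecture}

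The plan is to reduce this conjecture to a single homological non-vanishing statement and then to attack that statement. Since $A$ is local and selfinjective, $A$ is at once the unique indecomposable projective and the unique indecomposable injective $A$-module, so $A \oplus M$ is automatically a generator-cogenerator of $mod$-$A$ and $B = End_A(A \oplus M)$ has dominant dimension at least two. By Mueller's theorem,
\[ domdim(B) = \inf\{\, i \geq 1 \mid Ext_A^i(A \oplus M, A \oplus M) \neq 0 \,\} + 1 . \]
Because $A$ is projective-injective we have $Ext_A^i(A, N) = Ext_A^i(N, A) = 0$ for all $i \geq 1$ and all $N$, so $Ext_A^i(A \oplus M, A \oplus M) \cong Ext_A^i(M,M)$ for every $i \geq 1$, and hence $domdim(B) = 2$ precisely when $Ext_A^1(M,M) \neq 0$. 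In that case Theorem~\ref{Ricktheo} gives in addition that $B$ is a finitistic Auslander algebra of finitistic dimension two. Therefore the conjecture is equivalent to the statement that $Ext_A^1(M,M) \neq 0$ for every non-projective indecomposable module $M$ over every local selfinjective algebra $A$ --- that is, to Theorem~\ref{tachtheo} with the Hopf hypothesis removed.

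The second step would be to pass to the stable module category, where $\Omega$ is an autoequivalence and $Ext_A^1(M,M) \cong \underline{Hom}_A(\Omega M, M)$, so one must exhibit a homomorphism $\Omega M \to M$ that does not factor through a projective. A convenient sufficient condition is the existence of a map $g \colon \Omega M \to M$ with $g(\Omega M) \not\subseteq \rad M$: since $\Omega M \subseteq \rad P_0$ for the projective cover $P_0$ of $M$, the image of any extension of $g$ to $P_0$ would lie in $\rad M$, so such a $g$ represents a non-zero class. One also gets for free, from the almost split sequence ending in $M$, the non-vanishing $Ext_A^1(M, \tau M) \neq 0$; since $\tau \cong \Omega^2$ on the stable category of a symmetric algebra (and up to a Nakayama twist in general), this supplies a non-zero stable map $M \to \Omega M$, and the heart of the matter is to reverse the arrow and produce a non-zero stable map $\Omega M \to M$. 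Here one should bring in that $A$ is local: it has a unique simple module $S$ and a simple socle, so $\top N$ and $\soc N$ are non-zero direct sums of copies of $S$ for every non-zero $N$ and $End_A(M)$ is local --- exactly the local input that feeds the proof of Theorem~\ref{Ricktheo}.

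The main obstacle is carrying out this last step in full generality. For local Hopf algebras the non-vanishing $Ext_A^1(M,M) \neq 0$ was obtained in Theorem~\ref{tachtheo} from Proposition~\ref{extcrit} and Theorem~\ref{hopflemmas}, ultimately from the fact that projectivity of $M^{*} \otimes_K M \cong Hom_K(M,M)$ forces projectivity of $M$; over an arbitrary local selfinjective algebra there is no comultiplication, no tensor product of modules and no analogue of that projectivity criterion, so a genuinely different argument is required, and I do not expect the conjecture to follow from the methods of this paper. What I would do instead is verify the self-extension statement in the cases where it is tractable and record these as evidence: it holds for $M = S$ because the quiver of a non-semisimple local algebra has a loop, and it holds for $A \cong K[x]/(x^n)$ by Lemma~\ref{kxlemma}; more generally it holds whenever the sufficient condition above can be checked directly from the structure of the syzygies of $M$. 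The remaining general case is exactly what keeps the statement conjectural.
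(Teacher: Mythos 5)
Your reduction is exactly the one the paper makes: this statement is left as a conjecture there, and the paper records (immediately after Theorem \ref{Ricktheo}) that it is equivalent to $Ext_A^1(M,M) \neq 0$ for every indecomposable non-projective module over a local selfinjective algebra, proving this only under the additional Hopf hypothesis via Theorem \ref{tachtheo}. Your identification of the remaining open step and your evidence in special cases are accurate; the paper contains no proof of the general statement against which your attempt could be found wanting.
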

By \ref{Ricktheo}, the conjecture can equivalently states as $Ext_A^1(M,M) \neq 0$ for any non-projective module $M$ over a local selfinjective algebra.
We refer to \cite{Mar4} for more on $Ext_A^1(M,M)$ for selfinjective local algebras $A$.

\subsection{Finitistic Auslander algebras from standardly stratified algebras}
Standardly stratified algebras are a well studied class of algebras that generalise quasi-hereditary algebras.
For the basics on standardly stratified algebras we refer to \cite{Rei} and for relations to Auslander-Gorenstein algebras we refer to \cite{Mar3}. Recall that a quasi-hereditary algebra is a standardly stratified algebra with finite global dimension. This section gives the construction of standardly stratified finitistic Auslander algebras for local selfinjective algebras and special choices of modules.
Recall the following result:
\begin{theorem} \label{standstratfindimbound}
(see \cite{AHLU})
Let $A$ be a standardly stratified algebra with $n$ simple modules. Then the finitistic dimension of $A$ is bounded by $2n-2$.
\end{theorem}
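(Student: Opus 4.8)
The plan is to prove the estimate by induction on the number $n$ of simple modules, peeling off one stratum at a time along the chain of stratifying idempotent ideals that witnesses the standardly stratified structure of $A$. For the base case $n=1$ the algebra $A$ is local, and since $A$ is assumed non-semisimple its unique simple module is non-projective; in fact $findim(A)=0=2\cdot 1-2$, because a minimal map between nonzero projective modules over a local algebra can never be injective, so in a finite minimal projective resolution the last differential would have to vanish, forcing every module of finite projective dimension to be projective. The same trivial conclusion $findim=0$ covers the case where the quotient algebra produced at some stage of the induction happens to be semisimple.

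For the inductive step, let $J=AeA$ be the ideal at one end of a standardly stratified chain for $A$, with $e$ a primitive idempotent; thus $J$ is a stratifying ideal that is projective as a right $A$-module, $C:=eAe$ is a local algebra, and $\bar A:=A/J$ carries a standardly stratified structure with $n-1$ simple modules. I would record the homological consequences I intend to quote. From $0\to J\to A\to\bar A\to 0$ with $J$ right-projective we get $pd_A(\bar A)\le 1$. The functor $(-)e=\operatorname{Hom}_A(eA,-)$ is exact, and since $J$ is stratifying it carries projective $A$-modules to projective $C$-modules, so $pd_C(Me)\le pd_A(M)$ for every $M$; moreover $C$ is local (non-semisimple, or a division ring), hence $findim(C)=0$ by the argument of the base case. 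Finally, because $J$ is stratifying the recollement of $\operatorname{mod}A$ by $\operatorname{mod}\bar A$ and $\operatorname{mod}C$ lifts to the bounded derived categories: the derived inflation $i_*\colon D^b(\bar A)\to D^b(A)$ is fully faithful, $i^*=-\otimes^{\mathrm L}_A\bar A$ carries perfect complexes over $A$ to perfect complexes over $\bar A$, and $i^*i_*\cong\operatorname{id}$ on $\bar A$-modules; consequently every $\bar A$-module $N$ with $pd_A(N)<\infty$ has $pd_{\bar A}(N)<\infty$, and a short dimension shift using $pd_A(\bar A)\le 1$ then yields $pd_A(N)\le pd_{\bar A}(N)+1\le findim(\bar A)+1$.

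Now take any $M$ with $pd_A(M)<\infty$; I claim $pd_A(M)\le findim(\bar A)+2$. Since $pd_C(Me)\le pd_A(M)<\infty$ and $findim(C)=0$, the module $Me$ is $C$-projective, so $P:=Me\otimes_C eA$ is a projective $A$-module, and the counit of the adjunction $(-\otimes_C eA)\dashv(-)e$ fits into a four-term exact sequence $0\to T\to P\xrightarrow{g} M\to M/MJ\to 0$ with $\operatorname{im}g=MJ$ and $T=\ker g\cong\operatorname{Tor}^A_1(M,\bar A)$. Applying $(-)e$ shows that $T$ and $M/MJ$ are $\bar A$-modules, and $i^*$ preserving perfectness gives that they have finite $A$-projective dimension, so by the previous paragraph $pd_A(T)\le findim(\bar A)+1$ and $pd_A(M/MJ)\le findim(\bar A)+1$. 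From $0\to T\to P\to MJ\to 0$ we get $pd_A(MJ)\le findim(\bar A)+2$, and then from $0\to MJ\to M\to M/MJ\to 0$ we get $pd_A(M)\le findim(\bar A)+2$, as claimed. Hence $findim(A)\le findim(\bar A)+2\le\bigl(2(n-1)-2\bigr)+2=2n-2$, completing the induction.

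The main obstacle — and the only point at which "standardly stratified" is genuinely used, rather than just "$A$ has an idempotent ideal that is projective on one side" — is the descent statement that a finite-projective-dimension $\bar A$-module still has finite projective dimension over $\bar A$. An elementary dimension shift shows only that the $\bar A$-syzygies of such a module eventually acquire $A$-projective dimension $\le 1$, which does not make them $\bar A$-projective; getting past this needs the stratifying property of $J$, equivalently the full faithfulness of the derived inflation $D^b(\bar A)\to D^b(A)$, which is exactly what lets one transport the perfectness of $M$ over $A$ down to $\bar A$. Everything else is a mechanical combination of the two displayed short exact sequences with the standard inequalities for projective dimension along a short exact sequence.
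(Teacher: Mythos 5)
The paper offers no proof of this statement at all: it is quoted from \cite{AHLU}, where the argument runs through the contravariantly finite resolving subcategory $\mathcal{F}(\Delta)$ and the bound $\operatorname{pd}\mathcal{F}(\Delta)\le n-1$. Your stratum-by-stratum induction through the recollement is therefore a genuinely different route, and most of it is sound: the base case ($\operatorname{findim}$ of a local algebra is $0$), the observation that one-sided projectivity of $J=AeA$ already forces it to be stratifying and makes $(-)e$ preserve projectives, the vanishing $\operatorname{Tor}^A_i(N,\bar A)=0$ for $i\ge 1$ and $\bar A$-modules $N$, and the change-of-rings inequality $\operatorname{pd}_A(N)\le \operatorname{pd}_{\bar A}(N)+1$ are all correct.

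The gap is the sentence asserting that $T=\operatorname{Tor}^A_1(M,\bar A)$ and $M/MJ$ have finite $A$-projective dimension because ``$i^*$ preserves perfectness''. Perfectness of $i^*M=M\otimes^{\mathrm{L}}_A\bar A$ only tells you that one object of $D^b(\bar A)$ with two homology groups, $H_0=M/MJ$ and $H_1=T$, is perfect; a perfect complex with homology in two degrees need not have either homology module of finite projective dimension — the truncation triangle $T[1]\to i^*M\to M/MJ\to T[2]$ makes them perfect only relative to each other. Indeed, given $\operatorname{pd}_A(M)<\infty$, finiteness of $\operatorname{pd}_A(T)$, of $\operatorname{pd}_A(MJ)$ and of $\operatorname{pd}_A(M/MJ)$ are mutually equivalent, but none follows from $\operatorname{pd}_A(M)<\infty$ alone: nothing rules out a finite-projective-dimension $M$ with simple top and $MJ=\operatorname{rad}(M)$, so that $M/MJ$ is a simple $\bar A$-module of infinite projective dimension. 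Fortunately the argument can be repaired by passing to a syzygy: since $Me$ is $C$-projective, $\operatorname{Tor}^A_1(\Omega M,\bar A)\cong\operatorname{Tor}^A_2(M,\bar A)\cong\operatorname{Tor}^C_1(Me,eA)=0$, so for $\Omega M$ your four-term sequence collapses to $0\to(\Omega M)e\otimes_C eA\to\Omega M\to\Omega M\otimes_A\bar A\to 0$ with projective left-hand term; now $i^*(\Omega M)$ is a perfect complex concentrated in degree zero, so $\Omega M\otimes_A\bar A$ genuinely has finite projective dimension over $\bar A$, whence $\operatorname{pd}_A(\Omega M)\le\operatorname{findim}(\bar A)+1$ and $\operatorname{pd}_A(M)\le\operatorname{findim}(\bar A)+2$, which is exactly what your induction needs.
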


Using this theorem, we can give several examples of finitistic Auslander algebras inside the class of standardly stratified algebras. We need the following result, which is the main result of \cite{CheDl}:
\begin{theorem}
\label{chedlabtheo}
 Let $A$ be a local, commutative selfinjective algebra over an algebraically closed field. Let $\mathcal{X}=(A=X(1),X(2),...,X(n))$ be a sequence of local-colocal modules (meaning that all modules have simple socle and top and therefore can be viewed as ideals of $A$) with $X(i) \subseteq X(j)$ implying $j \leq i$. Let $X= \bigoplus\limits_{i=1}^{n}{X(i)}$ and $B=End_A(X)$. Then $B$ is properly stratified with a duality iff the following two conditions are satisfied:  \newline
1. $X(i) \cap X(j)$ is generated by suitable $X(t)$ of $\mathcal{X}$ for any $1 \leq i,j \leq n$ \newline
2. $X(j) \cap \sum\limits_{t=j+1}^{n}{X(t)}=\sum\limits_{t=j+1}^{n}{X(j) \cap X(t)}$ for any $1 \leq j \leq n$.
\end{theorem}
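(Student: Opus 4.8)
The plan is to put $B=\End_A(X)$ into the standard homological dictionary for endomorphism rings of generators and then to match conditions 1 and 2 with, respectively, the filtrability of the indecomposable projective $B$-modules by standard modules and the \emph{properness} of the resulting stratification; the duality will turn out to be automatic. First I would make two structural observations on $A$. Since $A$ is commutative and selfinjective it is basic, hence Frobenius, and in fact symmetric, because for a Frobenius form $\lambda$ the bilinear form $(a,b)\mapsto\lambda(ab)$ is symmetric by commutativity. It follows that $I\mapsto(0:I):=\{a\in A:aI=0\}$ is an inclusion-reversing involution on the lattice of ideals ($=$ submodules) of $A$ with $\dim I+\dim(0:I)=\dim A$, that every local (hence cyclic) ideal $Aa$ is isomorphic to $A/(0:a)$, and that the $K$-duality $D$ carries each inclusion $X(i)\hookrightarrow A$ to a surjection onto $A/(0:X(i))\cong X(i)$; hence $DX\cong X$ and $\End_A(X)^{op}\cong\End_A(X)$. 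This anti-automorphism fixes the isomorphism classes of the indecomposable projectives $P_B(i):=\Hom_A(X,X(i))$, so it induces a duality on the category of $B$-modules fixing the simples. Therefore the content of the theorem is ``$B$ is properly stratified $\iff$ (1) and (2)'', compatibility of the duality with the stratification being formal once $B$ is known to be properly stratified.

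Next I would set up the standard modules. As $A=X(1)$ is a direct summand of $X$, the module $X$ is a generator (and, $A$ being injective, a cogenerator), so $\Hom_A(X,-)$ restricts to an equivalence $\add X\xrightarrow{\sim}\proj B$ sending $X(i)$ to $P_B(i)$. Ordering $\{1,\dots,n\}$ by the numbering, so that $X(i)\subseteq X(j)$ forces $j\le i$, I would put $Y(i):=\sum\{X(t):X(t)\subsetneq X(i)\}$ (automatically a sum over indices $t>i$) and define the standard module $\Delta_B(i):=\Hom_A(X,X(i)/Y(i))$, with proper standard module $\bar\Delta_B(i)$ the largest quotient of $\Delta_B(i)$ with simple top $S_B(i)$.

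For $(1),(2)\Rightarrow$ properly stratified I would proceed as follows. Condition 1 says exactly that the family $\mathcal A$ of ``admissible'' submodules $\sum_{t\in T}X(t)$ is closed under intersection; using this, the chain $X(i)\supseteq Y(i)\supseteq\cdots$ can be refined (by an induction on the index set in which each intersection produced by a Mayer--Vietoris step again lies in $\mathcal A$) to a filtration of $X(i)$ by admissible submodules whose subquotients are all isomorphic to some $X(j)/Y(j)$, $j\ge i$. Applying $\Hom_A(X,-)$ turns this into a $\Delta$-filtration of $P_B(i)$ with subquotients $\Delta_B(j)$, $j\ge i$, and top $\Delta_B(i)$ --- here one checks, using conditions 1 (and 2), that the relevant quotient maps are right $\add X$-approximations, so that $\Hom_A(X,-)$ stays exact along the filtration --- whence $B$ is standardly stratified. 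Condition 2, the distributivity $X(j)\cap\sum_{t>j}X(t)=\sum_{t>j}(X(j)\cap X(t))$, is then precisely what forces each $\Delta_B(i)$ to be filtered by copies of $\bar\Delta_B(i)$, i.e.\ makes the stratification \emph{proper} rather than merely standard; by the duality above the same holds on the costandard side, and $B$ is properly stratified with a duality. Conversely, if $B$ is properly stratified, then unwinding the $\Delta$- and $\bar\Delta$-filtrations of the $P_B(i)$ back through $\add X\simeq\proj B$ forces the subquotients occurring there to be exactly the $\Hom_A(X,X(j)/Y(j))$, and reading this off inside $A$ forces every intersection $X(i)\cap X(j)$ to lie in $\mathcal A$ (condition 1) and to satisfy the distributivity (condition 2).

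The hard part will be the two combinatorial matchings in the previous paragraph: showing that condition 1 is exactly what lets the filtrations of the ideals $X(i)$ be refined to \emph{standard} subquotients (and keeps $\Hom_A(X,-)$ exact along them), and that condition 2 is exactly the extra input upgrading ``standardly stratified'' to ``properly stratified''. Both directions of these equivalences require careful bookkeeping in the ideal lattice of $A$, together with, on the converse side, a precise description of which $B$-modules are $\Delta$-filtered in terms of that lattice.
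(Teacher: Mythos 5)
The first thing to say is that the paper contains no proof of this statement to compare you against: it is quoted wholesale as the main result of Chen--Dlab \cite{CheDl}, and the author's ``proof'' is the citation. So your proposal has to be judged as a reconstruction of the Chen--Dlab argument. As such it is a sensible road map but not a proof. The preliminary reductions are fine: a commutative Frobenius algebra is symmetric, the local--colocal ideals are cyclic, hence $D(Aa)\cong A/(0:a)\cong Aa$, so $DX\cong X$ and the duality on $mod$-$B$ is indeed essentially automatic; and $\Delta_B(i):=\Hom_A(X,X(i)/Y(i))$ with $Y(i)$ the trace of the strictly smaller members of $\mathcal{X}$ is the right candidate for the standard modules. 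But the two assertions that carry the entire content of the theorem --- that condition 1 is ``exactly what lets'' the chain $X(i)\supseteq Y(i)$ be refined to a filtration with standard subquotients on which $\Hom_A(X,-)$ stays exact, and that condition 2 ``is precisely what forces'' each $\Delta_B(i)$ to be $\bar\Delta_B(i)$-filtered --- are stated with no argument, and you concede as much in your closing paragraph. A plan that defers exactly the hard part is not a proof attempt that can be checked.

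Two places where the deferral hides real difficulties. First, exactness of $\Hom_A(X,-)$ along your Mayer--Vietoris refinement is not formal: for a sequence $0\to X(s)\cap X(t)\to X(s)\oplus X(t)\to X(s)+X(t)\to 0$ you must show every map $X(u)\to X(s)+X(t)$ lifts, i.e.\ that the surjection is a right $\add(X)$-approximation; condition 1 only tells you the kernel is admissible, and turning that into the required lifting property (and then into the statement that the resulting filtration of $P_B(i)$ has subquotients that are honest standard modules for the \emph{given} order, with $\Delta_B(i)$ on top) is the heart of the matter. Second, the converse direction is essentially empty as written: ``unwinding the $\Delta$-filtrations back through $\add X\simeq\proj B$'' requires applying $X\otimes_B-$, which is not exact, so one cannot simply read a $\Delta$-filtration of $P_B(i)$ off as a filtration of the ideal $X(i)$; and one must also rule out that $B$ is properly stratified only for some \emph{other} order on the simples. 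Until the lifting/approximation arguments and the converse translation are actually carried out, the proposal establishes neither implication.
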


\begin{lemma} \label{commtheorem}
Let $A$ be a commutative selfinjective algebra with an ideal $I$ with $D(I) \cong I$.
Then $B:=End_A(A \oplus I)$ is a finitistic Auslander algebra with finitistic dimension equal to 2 that is standardly stratified.
\end{lemma}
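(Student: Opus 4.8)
The plan is to derive the statement from the Chen--Dlab classification (Theorem \ref{chedlabtheo}), the finitistic dimension bound for standardly stratified algebras (Theorem \ref{standstratfindimbound}), Lemma \ref{findimlemma} and the Morita--Tachikawa correspondence, so that essentially no new computation is needed. First the easy reductions: since $A$ is connected and commutative it is local, and being commutative selfinjective it is even symmetric, has simple socle $\soc(A)$, and has $D(A)\cong A$ as its unique indecomposable injective right module. One must assume $0\neq I\subsetneq A$, since for $I\in\{0,A\}$ the algebra $B$ is selfinjective (Morita equivalent to $A$) and the conclusion fails. For $I$ a nonzero proper ideal, $I$ contains $\soc(A)$ and hence has simple socle; since $D(I)\cong I$, it also has simple top, so $I$ is indecomposable and local-colocal, and as a proper ideal of the local algebra $A$ it is non-projective and not isomorphic to $A$. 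Hence $X:=A\oplus I$ is a basic generator-cogenerator of $mod\text{-}A$ with exactly two indecomposable summands, and (since $Hom_A(A,I)$ and $Hom_A(I,A)$ are nonzero) $B=End_A(X)$ is connected; by the Morita--Tachikawa correspondence $B$ has two simple modules and $domdim(B)\geq 2$.

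Next I would prove that $B$ is standardly stratified by applying Theorem \ref{chedlabtheo} to the sequence $\mathcal{X}=\bigl(A=X(1),\,X(2):=I\bigr)$ of local-colocal ideals of $A$. The standing condition ``$X(i)\subseteq X(j)\Rightarrow j\leq i$'' holds, because the only inclusion relating $A$ and $I$ is $X(2)=I\subseteq A=X(1)$. For $n=2$ the two numerical conditions of Theorem \ref{chedlabtheo} are immediate: each intersection $X(i)\cap X(j)$ equals $A$ or $I$ and is therefore generated by one of the $X(t)$, while condition $2$ reads $X(1)\cap X(2)=X(1)\cap X(2)$ for $j=1$ and $0=0$ for $j=2$. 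Thus $B$ is properly stratified with a duality, in particular standardly stratified. Note that the hypothesis $D(I)\cong I$ is used precisely to ensure that $I$ is local (not merely colocal), which is what makes $I$ an admissible module in Theorem \ref{chedlabtheo}; I expect the verification that $I$ is local-colocal, together with the assumption in Theorem \ref{chedlabtheo} that $K$ be algebraically closed (which, if not already in force, can be arranged by extending scalars to $\bar K$, a process that changes neither the dominant nor the finitistic dimension of $B$), to be the only points requiring some care.

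Finally, since $B$ is standardly stratified with $n=2$ simple modules, Theorem \ref{standstratfindimbound} gives $findim(B)\leq 2n-2=2$, while Lemma \ref{findimlemma} gives $findim(B)\geq domdim(B)\geq 2$. Hence $domdim(B)=findim(B)=2$; in particular $B$ has finite dominant dimension $\geq 2$ (so it is not selfinjective and the notion applies), its finitistic dimension equals its dominant dimension, so $B$ is a finitistic Auslander algebra of finitistic dimension $2$, and it is standardly stratified by the previous paragraph. As a byproduct, $domdim(B)=2$ together with Mueller's theorem recovers $Ext_A^1(I,I)\neq 0$, which is also visible from Theorem \ref{Ricktheo}. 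The only real work is the bookkeeping in the middle paragraph; everything else is a short chain of invocations.
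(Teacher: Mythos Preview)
Your proposal is correct and follows essentially the same route as the paper: show that $I$ is local-colocal (simple socle because $I$ is an ideal in a selfinjective local algebra, simple top from $D(I)\cong I$), invoke the Chen--Dlab theorem to obtain that $B$ is standardly stratified, apply the $2n-2$ finitistic dimension bound with $n=2$, and combine with $\mathrm{domdim}(B)\geq 2$ from Morita--Tachikawa. The paper's proof is terser---it does not spell out the verification of the two numerical conditions in Theorem~\ref{chedlabtheo} nor the reductions in your first paragraph---but the logical skeleton is identical; your only genuine addition is flagging the algebraically-closed hypothesis in Theorem~\ref{chedlabtheo}, which the paper silently passes over (your suggested fix via base change is plausible for the dimension equalities but would need more care to conclude that $B$ itself, rather than $B\otimes_K\bar K$, is standardly stratified).
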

\begin{proof}
First note that $I$ being an ideal has simple socle because $A$ is selfinjective and thus has simple socle. Now $top(I) \cong top(D(I)) \cong D(soc(I))$ is again simple.
Thus \ref{chedlabtheo} applies to give that $B$ is standardly stratified. Since $B$ has two simple modules, the finitistic dimension of $B$ is bounded by 2 by \ref{standstratfindimbound}. But since $B$ is an endomorphism ring of a generator-cogenerator, its dominant dimension is at least two. Since the dominant dimension is bounded by the finitsitic dimension for non-selfinjective algebras, $B$ is a finitistic Auslander algebra with finitistic dimension equal to two. 
\end{proof}

The next result illustrates that being an Auslander-Gorenstein algebra might be extremely rare compared to the more general concept of being a finitistic Auslander algebra.    
\begin{theorem} \label{theoremexamples}
Let $A$ be an $K$-algebra.
\begin{enumerate}
\item Let $A$ be a commutative selfinjective algebra with enveloping algebra $A^{e}=A \otimes_K A$. Then $B:=End_{A^{e}}(A^{e} \oplus A)$ is a finitistic Auslander algebra of finitistic dimension equal to two that is standardly stratified.
It is an Auslander-Gorenstein algebra iff $A$ is a 2-periodic algebra iff $A \cong K[x]/(x^n)$ for some $n \geq 2$.
It is never a higher Auslander algebra.
\item Let $A$ be a selfinjective local algebra with simple module $S$. Then $End_A(A \oplus S)$ is a finitistic Auslander algebra with finitistic dimension equal to two that is standardly stratified. It is an Auslander-Gorenstein algebra iff $A \cong K[x]/(x^n)$ for some $n \geq 2$ and it is a higher Auslander algebra iff $A \cong K[x]/(x^2)$.

\end{enumerate}
\end{theorem}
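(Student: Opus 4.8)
The plan is as follows. In both parts write $B=End_C(C\oplus M')$, where $C$ is connected selfinjective and $M'$ is indecomposable non-projective: in (1) take $C=A^{e}$ and $M'=A$, regarded as an $A^{e}$-module via the multiplication map $A^{e}\twoheadrightarrow A$ (this module is non-projective because the non-semisimple algebra $A$ is not separable, and indecomposable because $End_{A^{e}}(A)=Z(A)=A$ is local); in (2) take $C=A$ and $M'=S$. For the structural assertions in (1) I would invoke Lemma~\ref{commtheorem}: both $A$ and $A^{e}$ are commutative selfinjective, hence commutative Frobenius, hence symmetric, so $D(A)\cong A$ and $D(A^{e})\cong A^{e}$ as $A^{e}$-modules; dualising $A^{e}\twoheadrightarrow A$ realises $A\cong D(A)$ as an ideal $I\subseteq D(A^{e})\cong A^{e}$ with $D(I)\cong I$, and Lemma~\ref{commtheorem} applied to the ambient algebra $A^{e}$ and the self-dual ideal $I$ gives that $B\cong End_{A^{e}}(A^{e}\oplus I)$ is a standardly stratified finitistic Auslander algebra of finitistic, and hence dominant, dimension $2$. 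In (2), $Ext^{1}_{A}(S,S)\cong(rad(A)/rad^{2}(A))^{*}\neq 0$ by Nakayama, so $B=End_{A}(A\oplus S)$ is a finitistic Auslander algebra of finitistic dimension $2$ by Theorem~\ref{Ricktheo}; and $B$ is standardly stratified by a direct argument: if $f\in B$ is the idempotent onto the summand $S$, then $fBf=End_{A}(S)$ is a division ring, so $BfB$ is a stratifying ideal (the multiplication map $Bf\otimes_{fBf}fB\to BfB$ is an isomorphism by a rank count, using that $Hom_{A}(S,A)\cong soc(A)$ and $Hom_{A}(A,S)$ are free of rank one over $fBf$ and that their composite is a nonzero endomorphism of $A$ with image $soc(A)$), while $B/BfB\cong A/soc(A)$ is local and hence standardly stratified for trivial reasons, so the chain $0\subseteq BfB\subseteq B$ stratifies $B$.

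Since $B$ is a finitistic Auslander algebra of dominant dimension $2$ in both cases, Mueller's theorem forces $Ext^{1}_{C}(C\oplus M',C\oplus M')\neq 0$, so Proposition~\ref{correspondences} applies with $d=2$. By Proposition~\ref{correspondences}(2), $B$ is a higher Auslander algebra iff $add(C\oplus M')=(C\oplus M')^{\perp 0}=mod-C$, i.e.\ iff $C$ is representation-finite with all indecomposables in $add(C\oplus M')$. In (1) this fails: $A^{e}=A\otimes_{K}A$ is representation-infinite (its Gabriel quiver has at least two loops at a vertex, since $rad(A^{e})/rad^{2}(A^{e})$ contains two copies of $rad(A)/rad^{2}(A)$), so $B$ is never higher Auslander. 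In (2), $add(A\oplus S)=mod-A$ forces $mod-A$ to have exactly the two indecomposables $A$ and $S$; then $rad(A)$ is indecomposable (it has simple socle) and cannot be $\cong A$ for dimensional reasons, so $rad(A)\cong S$, $A$ has Loewy length $2$ and $A\cong K[x]/(x^{2})$, for which $B$ is the classical Auslander algebra of global dimension $2$.

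The Auslander--Gorenstein case is the main content. As $B$ is a finitistic Auslander algebra of dominant dimension $2$, it is Auslander--Gorenstein iff $Gdim(B)<\infty$, equivalently (Proposition~\ref{correspondences}(1)) iff $Dom_{2}(B)=Gp(B)$, i.e.\ iff every module in $Dom_{2}(B)$ is Gorenstein projective over $B$. Under the equivalence of Theorem~\ref{ARSmaintheorem}, $Dom_{2}(B)$ is identified with $mod-C$ so that $Proj(B)$ corresponds to $add(C\oplus M')$, the projective-injective projective $B$-module to $Hom_{C}(C\oplus M',C)$, and minimal projective $B$-resolutions to minimal $add(C\oplus M')$-resolutions over $C$; with this dictionary I expect the condition $Dom_{2}(B)=Gp(B)$ to become: $M'$ is $\Omega$-periodic as a $C$-module. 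For the implication ``$M'$ periodic $\Rightarrow$ $B$ Gorenstein'' one shows that, when $\Omega_{C}^{k}(M')\cong M'$, the minimal $add(C\oplus M')$-resolution of every $C$-module is eventually periodic and remains exact under $Hom_{C}(-,M')$, so that every module in $Dom_{2}(B)$ is $\Omega$-periodic over $B$, hence Gorenstein projective; combined with $Dom_{2}(B)=\Omega^{2}(mod-B)$ this bounds the Gorenstein projective dimension of every $B$-module by $2$. For the converse, assuming $M'$ is not $\Omega$-periodic over $C$ (equivalently, since $C$ is selfinjective, the Betti numbers of $M'$ are unbounded), one produces a $C$-module $X$ --- built from the non-periodic syzygy sequence of $M'$ --- for which $Hom_{C}(C\oplus M',X)$ has infinite Gorenstein projective dimension over $B$, typically by exhibiting $Ext^{i}_{B}(Hom_{C}(C\oplus M',X),B)\neq 0$ for infinitely many $i$. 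This periodicity bookkeeping, and in particular the construction of the witness $X$ in the converse, is the step I expect to be the main obstacle.

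Finally one identifies the periodicity condition with the stated algebras. In (2), the simple $A$-module $S$ is $\Omega$-periodic iff its sequence of Betti numbers is bounded iff $\dim_{K}(rad(A)/rad^{2}(A))=1$ (for a local artinian algebra of embedding dimension at least two, the Betti numbers of the residue field are unbounded) iff $A\cong K[x]/(x^{n})$, and then $\Omega^{2}_{A}(S)\cong S$. In (1) the enveloping-algebra module $A$ is $\Omega$-periodic iff $A$ is a zero-dimensional hypersurface ring iff $A\cong K[x]/(x^{n})$ --- the classical characterisation of commutative periodic algebras --- and then $\Omega^{2}_{A^{e}}(A)\cong A$, so ``$\Omega$-periodic'' and ``$2$-periodic'' coincide here. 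Together with the preceding paragraph this yields all the asserted equivalences.
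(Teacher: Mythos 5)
Your structural claims (standardly stratified, dominant and finitistic dimension two) are fine and partly follow a different but legitimate route: in (2) you get the finitistic Auslander property from Theorem~\ref{Ricktheo} plus $Ext^1_A(S,S)\neq 0$ and argue standard stratification directly via the stratifying ideal $BfB$, where the paper instead cites Wen's theorem and the bound $2n-2$ for standardly stratified algebras; in (1) your realisation of $A$ as a self-dual ideal of $A^e$ and appeal to Lemma~\ref{commtheorem} is exactly the paper's argument, and your ``never higher Auslander'' and ``higher Auslander iff $K[x]/(x^2)$'' arguments match the paper's.

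The Auslander--Gorenstein part, however, has a genuine gap, and not only because you explicitly leave the converse (``construction of the witness $X$'') as an expected obstacle. The criterion you plan to prove --- $Dom_2(B)=Gp(B)$ iff $M'$ is $\Omega$-periodic of \emph{some} period --- is false, and so is your closing claim in case (2) that the simple module of a local selfinjective algebra is periodic iff the embedding dimension is one. Take $A=KQ_8$ in characteristic $2$: $A$ is local selfinjective (indeed symmetric), the simple module $S=k$ is $\Omega$-periodic of period $4$, yet $A\not\cong K[x]/(x^n)$; here $B=End_A(A\oplus S)$ has $findim(B)=domdim(B)=2$, so if $B$ were Gorenstein it would be Auslander--Gorenstein with $Gdim=domdim=2$, which by the precluster-tilting criterion would force $\tau(S)\cong\Omega^2(S)\cong S$ --- false, since the period is $4$. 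So ``periodic'' is not the right condition; the correct one, which the paper simply reads off from Proposition~\ref{correspondences}(1) (the Chen--Koenig/Iyama--Solberg characterisation), is $\tau$-stability $\tau(M')\cong M'$, i.e.\ $\Omega^2(\nu M')\cong M'$, which for the symmetric algebras $A^e$ and $A$ means $2$-periodicity; the identification with $K[x]/(x^n)$ is then Skowronski's Corollary~2.10 (for the bimodule $A$ in case (1) and for the simple module in case (2)). Your two errors happen to cancel only for commutative local algebras, where periodicity of the residue field does force a hypersurface; for general local selfinjective $A$ in case (2) your route, if completed as described, would misclassify algebras such as $KQ_8$. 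To repair the argument you should replace the periodicity criterion by the $\tau$-criterion of Proposition~\ref{correspondences}(1) and use $\tau\cong\Omega^2$ for symmetric algebras, after which no construction of a witness module is needed.
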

\begin{proof}
\begin{enumerate}
\item Note that being commutative selfinjective implies that $A$ is even symmetric and thus $D(A) \cong A$ has $A^{e}$-bimodules. Then \ref{commtheorem} applies to show that $B$ is a finitistic Auslander algebra with finitistic dimension equal to two. Now by (1) of \ref{correspondences}, $B$ is an Auslander-Gorenstein algebra iff $\tau(A) \cong A$ as $A^{e}$-bimodules. Now since $A^{e}$ is symmetric: $\tau \cong \Omega^{2}$. And thus $B$ is an Auslander-Gorenstein algebra iff $A$ is 2-periodic iff $A \cong K[x]/(x^n)$ for some $n \geq 2$ by corollary 2.10. of \cite{Sko}. What is left to do is calculate when the algebra $B$ has finite global dimension in case $A \cong K[x]/(x^n)$.
But $B$ can have only global dimension equal to the dominant dimension equal to two iff it is a Auslander algebra iff $A^{e}$ has $A^{e}$ and $A$ as its only indecomposable modules. This is certainly never the case, since $A^{e}$ has at least two loops in its quiver and thus is never representation-finite.
\item In \cite{We} theorem 1.1., it was proven that $End_A(A \oplus S)$ is always standardly stratified in case $A$ is a selfinjective local algebra with simple module $S$. One has $Ext^{1}(S,S) \neq 0$ since the algebra is local. Thus the dominant and finitistic dimension are equal to two. Again by \cite{Sko} corollary 2.10. the module $S$ is 2-periodic iff $A \cong K[x]/(x^n)$ and the global dimension is equal to two iff it is finite iff $End_A(A \oplus S)$ is an Auslander algebra iff $A \cong K[x]/(x^2)$.
\end{enumerate}
\end{proof}

We give another example, where high dominant dimension automatically leads to being a finitistic Auslander algebra and the bound $2n-2$ for the finitistic dimension of standardly stratified algebras is attained for an arbitrary $n \geq 2$.
\begin{example}
Let $A$ be a representation-finite block of a Schur algebra with $n$ simple modules. Then $A$ has dominant dimension equal to $2n-2$. This was noted and proven in \cite{ChMar} and \cite{Mar}. By \ref{standstratfindimbound} and the fact that the dominant dimension is bounded by the finitistic dimension, it is a finitistic Auslander algebra and even a higher Auslander algebra since it has finite global dimension, being a block of a Schur algebra.
\end{example}

The next proposition shows how to construct alot of examples of finitistic Auslander algebras from known ones:
\begin{proposition}
Let $A$ be a finitistic Auslander algebra of finitistic dimension equal to $m \geq 2$ and $B$ a selfinjective algebra.
Then $A \otimes_K B$ is a finitistic Auslander algebra of finitistic dimension equal to $m$.

\end{proposition}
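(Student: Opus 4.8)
The goal is to prove that $domdim(A\otimes_K B)=findim(A\otimes_K B)=m$; since $2\le m<\infty$, this is exactly the assertion that $A\otimes_K B$ is a finitistic Auslander algebra of finitistic dimension $m$. I would first reduce to the case where $B$ is a Frobenius algebra: the basic version $B_0$ of $B$ is again selfinjective, hence Frobenius, and $A\otimes_K B$ is Morita equivalent to $A\otimes_K B_0$ (from $B=End_{B_0}(P)$ for a progenerator $P$ one gets $A\otimes_K B\cong End_{A\otimes_K B_0}(A\otimes_K P)$); since dominant and finitistic dimension are Morita invariant, we may assume $B$ is Frobenius. Then the inclusion $A\hookrightarrow A\otimes_K B$, $a\mapsto a\otimes 1$, is a Frobenius extension: $A\otimes_K B$ is $A$-free of finite rank, and $\mathrm{id}_A\otimes\lambda\colon A\otimes_K B\to A$ is $A$-bilinear and nondegenerate for any Frobenius form $\lambda\colon B\to K$.

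For the dominant dimension, I would tensor a minimal injective coresolution $0\to A\to I^{0}\to I^{1}\to\cdots$ of $A$ with $B$ over $K$. This gives an injective coresolution of $A\otimes_K B$ (each $I^{j}\otimes_K B$ is $A\otimes_K B$-injective since $I^{j}$ is $A$-injective and $B$ is selfinjective), and it is minimal because an essential $A$-submodule $N\subseteq X$ yields an essential $A\otimes_K B$-submodule $N\otimes_K B\subseteq X\otimes_K B$ (a nonzero $A\otimes_K B$-submodule of $X\otimes_K B$ is a nonzero $A$-submodule of $X^{\dim_K B}$, hence meets $N^{\dim_K B}=N\otimes_K B$). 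Since restriction of scalars along $A\hookrightarrow A\otimes_K B$ is exact and preserves projectives ($A\otimes_K B$ being $A$-free), $I^{j}\otimes_K B$ is $A\otimes_K B$-projective if and only if $I^{j}$ is $A$-projective, whence $domdim(A\otimes_K B)=domdim(A)=m$. (Alternatively, writing $A\cong End_C(M)$ by Morita--Tachikawa, one checks that $M\otimes_K B$ is a generator-cogenerator of $\mathrm{mod}\text{-}(C\otimes_K B)$ — here $D(B)\in add(B)$ — and applies Mueller's theorem together with $Ext^{i}_{C\otimes_K B}(M\otimes_K B,M\otimes_K B)\cong Ext^{i}_{C}(M,M)\otimes_K B$, using $Ext^{\ge 1}_B(B,B)=0$.)

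For the finitistic dimension, \ref{findimlemma} already gives $findim(A\otimes_K B)\ge domdim(A\otimes_K B)=m$, so it remains to prove $findim(A\otimes_K B)\le m$. The plan is to pass to Gorenstein projective dimensions: as recalled in the preliminaries, $findim(-)=Gfindim(-)$ by \cite{Che}, and I would use the known fact that Gorenstein projective dimension is invariant under Frobenius extensions, so that for $A\subseteq A\otimes_K B$ a module $W$ is Gorenstein projective over $A\otimes_K B$ if and only if $W|_A$ is Gorenstein projective over $A$, with $Gpd_{A\otimes_K B}(W)=Gpd_A(W|_A)$ in general (for Frobenius extensions induction and coinduction agree, totally acyclic complexes of projectives transfer both ways, and restriction of a complete resolution is a complete resolution because $A\otimes_K B$ is $A$-projective with $Hom_A(A\otimes_K B,A)\cong A\otimes_K B$ as left $A\otimes_K B$-modules). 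Hence $Gpd_{A\otimes_K B}(W)<\infty$ forces $Gpd_A(W|_A)<\infty$, so $Gpd_A(W|_A)\le Gfindim(A)=findim(A)=m$ and therefore $Gpd_{A\otimes_K B}(W)\le m$; this gives $findim(A\otimes_K B)=m$. Combining the two parts yields $findim(A\otimes_K B)=m=domdim(A\otimes_K B)$ with $2\le m<\infty$, as required. (Equivalently one may finish via \ref{maintheorem}(3): restriction sends $Dom_m(A\otimes_K B)$ into $Dom_m(A)$ by \ref{marvil}, and it preserves and reflects Gorenstein projectivity, so the inclusion $Dom_m(A\otimes_K B)\subseteq Gp(A\otimes_K B)\cup Gp_{\infty}(A\otimes_K B)$ follows from the corresponding one for $A$.)

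The hard part is precisely the inequality $findim(A\otimes_K B)\le m$: one must control projectivity (resp.\ Gorenstein projectivity) over $A\otimes_K B$ by the corresponding data over $A$, and the naive idea of restricting a finite-length projective resolution to $A$ and to $B$ is insufficient — a module can be projective on restriction both to $A$ and to $B$ yet fail to be projective over $A\otimes_K B$ — so selfinjectivity of $B$ must be used in an essential way, and it enters exactly through the Frobenius extension structure and the two-sided transfer of Gorenstein projectivity. The remaining points to handle carefully are the reduction to a Frobenius algebra $B$, the verification that $A\hookrightarrow A\otimes_K B$ is genuinely a Frobenius extension, and the minimality of the tensored injective coresolution used for the dominant dimension.
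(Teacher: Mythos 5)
Your argument reaches the right conclusion, but it takes a genuinely different and much heavier route than the paper. The paper's proof is a two-line citation argument: by \cite{ERZ} (theorem 16) the finitistic dimension of a tensor product is the sum of the finitistic dimensions, so $findim(A\otimes_K B)=m+0=m$ since selfinjective algebras have finitistic dimension zero, and by \cite{Mue} (lemma 6) the dominant dimension of a tensor product is the minimum of the dominant dimensions, so $domdim(A\otimes_K B)=\min(m,\infty)=m$. Your proof in effect reproves the two needed special cases by hand: the tensored minimal injective coresolution (or the K\"unneth/Mueller computation in your parenthesis) recovers the dominant dimension statement correctly, and your passage through $findim=Gfindim$ (\cite{Che}) plus the invariance of Gorenstein projective dimension along the Frobenius extension $A\hookrightarrow A\otimes_K B$ recovers the finitistic dimension statement. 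What your route buys is self-containedness on the dominant dimension side and a conceptual explanation of why selfinjectivity of $B$ matters (it makes the extension Frobenius, and Gorenstein projectivity, unlike projectivity, transfers both ways); what it costs is that the crucial inequality $findim(A\otimes_K B)\le m$ now rests on the nontrivial theorem that restriction along a Frobenius extension preserves \emph{and reflects} Gorenstein projectivity with equality of Gorenstein projective dimensions (results of Ren and of Zhao on Frobenius extensions). Be aware that your parenthetical sketch does not actually establish the reflection direction: restricting a complete resolution only gives the easy direction, and inducing a complete resolution of $W|_A$ produces one for the induced module, not for $W$, and you cannot appeal to separability of the extension since $B$ need not be separable; so this step genuinely requires the cited external theorem, exactly as the paper's step requires \cite{ERZ}. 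With that citation made explicit, your proof is correct; the remaining details (reduction to $B$ Frobenius, injectivity and minimality of the tensored coresolution, projectivity detected by restriction since $(A\otimes_K B)|_A$ is free) all check out.
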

\begin{proof}
By \cite{ERZ} theorem 16, the finitistic dimension of the tensor product of two algebras equals the sum of the finitistic dimensions of the two algebras. Thus $A \otimes_K B$ has finitistic dimension equal to $m$, since $B$ has finitistic dimension zero as a selfinjective algebra. Now the dominant dimension of the tensor product of two algebras equals the minimum of the dominant dimension of the two algebras by \cite{Mue} lemma 6.
Thus $A \otimes_K B$ also has dominant dimension equal to $m$, since $B$ has infinite dominant dimension as a selfinjective algebra.
\end{proof}

\section{Algebras with exactly one indecomposable projective non-injective module}
We assume that all algebras in this section are basic. This is no restriction since every algebra is Morita equivalent to its basic algebra and all results and notion of this section are invariant under Morita equivalence.
For a finite dimensional algebra $A$, let $\nu_A:=DHom_A(-,A)$ denote the Nakayama functor. Following \cite{FHK}, an algebra $A$ is called \emph{almost selfinjective} in case there is at most one indecomposable projective non-injective module and the injective envelope $I(A)$ of the regular module $A$ has the property that $\nu_A^i(I(A))$ is projective for all $i \geq 0$.
We define a \emph{generalised almost selfinjective algebra} as an algebra with dominant dimension at least one that has at most one indecomposable projective non-injective module. Those algebras generalise the almost selfinjective algebras from \cite{FHK}, see lemma 2.6. in \cite{FHK}. 
Again we are mainly interested in the non-selfinjective generalised almost selfinjective algebras. We call non-selfinjective generalised almost selfinjective algebras for short \emph{NGAS algebras} in the following.
We first show how to construct large classes of NGAS algebras.
We recall the construction of SGC-extension algebras from \cite{CIM}:
\begin{definition}
Let $A$ be a non-selfinjective algebra. For $m \geq 1$ define the \emph{$m$-th SGC-extension algebra} of $A$ as $A^{(m+1)}:=End_{A^{(m)}}(N_m)$, when $A^{(0)}=A$ and $N_m$ denotes the basic version of the module $A^{(m)} \oplus D(A^{(m)})$.
\end{definition}

The next proposition provides us with the construction of large classes of NGAS algebras:
\begin{proposition}
\begin{enumerate}
\item Let $A$ be an algebra with exactly one indecomposable projective non-injective module. Then the $m$-th SGC-extension algebra of $A$ is a NGAS algebra for each $m \geq 1$.
\item Let $A$ be a selfinjective algebra and $M$ an indecomposable non-projective $A$-module. Then $B:=End_A(A \oplus M)$ is a NGAS algebras.
\end{enumerate}
\end{proposition}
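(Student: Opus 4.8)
The plan is to reduce everything to a count of indecomposable projective non-injective modules. Recall that a NGAS algebra is by definition a non-selfinjective algebra that has dominant dimension at least one and at most one indecomposable projective non-injective module; since an algebra is selfinjective exactly when it has no indecomposable projective non-injective module, being a NGAS algebra is equivalent to having dominant dimension $\geq 1$ together with \emph{exactly} one indecomposable projective non-injective module. In both (1) and (2) the algebra in question is of the form $\End_D(N)$ for a generator-cogenerator $N$ of $mod-D$ (for (1), $D=A^{(m)}$ and $N=N_m$; for (2), $D=A$ and $N=A\oplus M$), so by the Morita-Tachikawa correspondence it automatically has dominant dimension at least two. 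Hence the whole content is to show this algebra has exactly one indecomposable projective non-injective module, and I would first isolate a bookkeeping lemma that settles both parts uniformly.

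The lemma I would use reads: if $C=\End_D(N)$ with $N$ a basic generator-cogenerator of $mod-D$, then $C$ has exactly $|N|-n_D$ indecomposable projective non-injective modules, where $|N|$ is the number of pairwise non-isomorphic indecomposable direct summands of $N$ and $n_D$ is the number of simple $D$-modules. Indeed, the indecomposable projective $C$-modules are in bijection with the indecomposable summands of $N$, giving $|N|$ of them; on the other hand $C$ has dominant dimension at least two, and in the notation of Theorem \ref{ARSmaintheorem} applied to $C$ the minimal faithful projective-injective right $C$-module $eC$ satisfies $eCe\cong D$, while part (1) of that theorem identifies $\add(eC)$ — the class of projective-injective $C$-modules — with the category of injective $D$-modules, of which there are $n_D$ indecomposable ones; subtracting gives the count. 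This identification of the projective-injective $C$-modules with the injective $eCe$-modules is the one genuinely non-formal ingredient, and I expect it to be the main point to pin down; it is exactly the fact underlying the constructions of \cite{CIM}, and otherwise everything is bookkeeping with indecomposable summands.

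For (1), I would prove by induction on $m$ the stronger statement that $A^{(m)}$ has exactly as many indecomposable projective non-injective modules as $A$ for every $m\geq 0$. Write $n_m$ for the number of simple $A^{(m)}$-modules and $q_m$ for the number of its indecomposable projective non-injective modules; since $A^{(m)}$ has $n_m$ indecomposable projectives and $n_m$ indecomposable injectives, it has $q_m$ indecomposable injective non-projective modules as well, so $N_m$ (the basic version of $A^{(m)}\oplus D(A^{(m)})$), being a generator-cogenerator of $mod-A^{(m)}$, has $|N_m|=2n_m-(n_m-q_m)=n_m+q_m$ indecomposable summands. Applying the lemma with $C=A^{(m+1)}=\End_{A^{(m)}}(N_m)$ and $D=A^{(m)}$ gives $n_{m+1}=|N_m|=n_m+q_m$ and $q_{m+1}=|N_m|-n_m=q_m$, so $q_m=q_0$ for all $m$. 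If $A$ has exactly one indecomposable projective non-injective module, i.e. $q_0=1$, then $q_m=1$ for every $m$, and since each $A^{(m)}$ with $m\geq 1$ already has dominant dimension at least two, each such $A^{(m)}$ is a NGAS algebra.

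For (2), $N:=A\oplus M$ is a generator of $mod-A$, and since $A$ is selfinjective its projective and injective modules coincide, so $A_A$ is itself a cogenerator and hence $N$ is a generator-cogenerator; moreover $M$ is indecomposable and, being non-projective, is not a summand of $A$, so $N$ is basic with $|N|=n_A+1$. Applying the lemma with $C=B=\End_A(N)$ and $D=A$ then gives directly that $B$ has dominant dimension at least two and exactly $|N|-n_A=1$ indecomposable projective non-injective module, hence is a NGAS algebra.
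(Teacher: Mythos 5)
Your proof is correct and follows essentially the same route as the paper: count the indecomposable projective modules of the endomorphism ring via the indecomposable summands of the generator-cogenerator, identify the projective-injective ones via Theorem \ref{ARSmaintheorem}(1) with the injective modules over the base algebra, and subtract, with the Morita--Tachikawa correspondence supplying dominant dimension at least two. Your explicit induction on $m$ in part (1), tracking the invariants $n_m$ and $q_m$, is in fact more careful than the paper's argument, which only treats the first SGC-extension explicitly and leaves the iteration implicit.
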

\begin{proof}
\begin{enumerate}
\item 
Let $A$ be an algebra with exactly one indecomposable projective non-injective module and $B$ the SGC-extension of $A$. Assume $A$ has $n$ simple modules.
Recall from \ref{ARSmaintheorem} (1) that the number of indecomposable projective-injective $B$-modules equals the number of indecomposable injective $A$-modules, which is $n$. The number of simple $B$-modules equals $n+1$ by assumption on $A$.
Since $B$ is isomorphic to the endomorphism ring of a generator-cogenerator $B$ has dominant dimension at least two and thus is a NGAS algebra.
\item As before, from \ref{ARSmaintheorem} (1) it follows that $B$ has $n+1$ simple modules and $n$ indecomposable projective-injective modules and dominant dimension at least two when $A$ has $n$ simple modules.
\end{enumerate}
\end{proof}
(1) of the previous proposition applies for example to every local non-selfinjective algebra $A$ to provide infinitely many NGAS algebras $A^{(m)}$ for $m \geq 1$. The previous proposition also shows that NGAS algebras are closed under SGC-extensions.
Let $A$ be an algebra of dominant dimension $d \geq 1$ with minimal faithful projective-injective module $eA$.
Following \cite{PS}, we the basic version of the module of the form $eA \oplus \Omega^{-i}(A)$ a \emph{shifted tilting module} for each $0 \leq i \leq d$.
That those modules are really tilting modules can be found in \cite{PS}. Dually, the basic version of the modules of the form $eA \oplus \Omega^i(D(A))$ are called \emph{coshifted cotilting modules} for $0 \leq i \leq d$.
For our next results, we recall several results from \cite{BS}.
Let $M$ be a module and 
$$0 \rightarrow A \xrightarrow{f_1} M_1 \xrightarrow{f_2} \cdots \xrightarrow{f_n} M_n \cdots$$
a complex with $K_i = cokern(f_i)$ for $i \geq 1$ and $K_0=A$ such that each map $K_i \rightarrow M_{i+1}$ is a minimal left $add(M)$-approximation. Let $\eta_n$ be the truncated complex ending in $M_n$ obtained from the above complex.
Then $M$ is said to have \emph{faithful dimension} $fadim(M)$ equal to $n$ if $\eta_n$ is exact, but $\eta_{n+1}$ is not.
For a module $M$ let $\delta(M)$ denote the number of indecomposable non-isomorphic direct summands of $M$.
Recall that an \emph{almost complete cotilting module} $M$ over an algebra $A$ is a direct summand of a cotilting module $N$ such that $\delta(M)=\delta(A)-1$. A \emph{complement} of an almost complete cotilting module $M$ is a module $X$ such that $M \oplus X$ is a cotilting module.
\begin{theorem} \label{BStheorem}
Let $M$ be an $A$-module and $C:=End_A(M)$. Note that we can view $M$ as a left $C$-module.
\begin{enumerate}
\item $M$ has faithful dimension at least two if and only if $A \cong End_C(M)$.
\item In case $M$ has faithful dimension at least two, we have $fadim(M)=n$ if and only if $Ext_C^i(M,M)=0$ for $i=1,...,n-2$ and $Ext_C^{n-1}(M,M) \neq 0$.
\item Let $M$ be an almost complete cotilting module. Then $M$ has exactly $n+1$ indecomposable complements if and only if $fadim(M)=n$.
\item Let $M$ be an almost complete cotilting module that is faithful. Then $M$ has at least two complements.
\end{enumerate}
\end{theorem}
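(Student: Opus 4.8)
\emph{Parts (1) and (2).} The plan is to push $\Hom_A(-,M)$ and then $\Hom_C(-,M)$ through the approximation complex $\eta\colon 0\to A\xrightarrow{f_1}M_1\xrightarrow{f_2}M_2\to\cdots$ and compare the outcome with $\eta$ itself. First I would check that $\Hom_A(\eta,M)$ is a projective resolution of the left $C$-module $M$: since $M_i\in\add M$ one has $\Hom_A(M_i,M)\in\add({}_CC)$, while exactness at $\Hom_A(M_i,M)$ is precisely the statement that $K_i\to M_{i+1}$ is a left $\add M$-approximation (any $g\colon M_i\to M$ with $gf_i=0$ factors through $K_i=\mathrm{coker}(f_i)$, hence through $M_{i+1}$, and $\Hom_A(A,M)=M$ is hit because $f_1$ is a left $\add M$-approximation of $A$). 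So, with $P_j:=\Hom_A(M_{j+1},M)$, we obtain a projective resolution $P_\bullet\to{}_CM\to 0$. Applying $\Hom_C(-,M)$ and using that the biduality map $N\to\Hom_C(\Hom_A(N,M),M)$ is an isomorphism for every $N\in\add M$ (it is the identity for $N=M$, and both sides are additive), the complex $\Hom_C(P_\bullet,M)$ is naturally identified with $M_1\xrightarrow{f_2}M_2\xrightarrow{f_3}\cdots$; hence $\Ext_C^i(M,M)$ is the cohomology of this complex at $M_{i+1}$. In particular $\Ext_C^0(M,M)=\End_C(M)\cong\ker(f_2)$, and a diagram chase shows that the canonical algebra map $A\to\End_C(M)$, composed with this isomorphism and with the inclusion $\ker(f_2)\hookrightarrow M_1$, is exactly $f_1$. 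Therefore $A\to\End_C(M)$ is an isomorphism if and only if $f_1$ is injective with $\operatorname{im}(f_1)=\ker(f_2)$, i.e.\ if and only if $\eta_2$ is exact, i.e.\ if and only if $fadim(M)\ge 2$; this is (1). Assuming $fadim(M)\ge 2$, for $n\ge 2$ we then get $fadim(M)\ge n$ iff $\eta$ is exact at $M_2,\dots,M_{n-1}$, iff $\ker(f_{j+1})=\operatorname{im}(f_j)$ for $j=2,\dots,n-1$, iff $\Ext_C^i(M,M)=0$ for $i=1,\dots,n-2$; and $fadim(M)=n$ additionally forces exactness to fail at $M_n$, i.e.\ $\Ext_C^{n-1}(M,M)\ne 0$. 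This is (2).

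\emph{Part (4)} is immediate once (3) is known: if $M$ is faithful, then $A$ embeds into a finite product of copies of $M$, so the minimal left $\add M$-approximation $f_1\colon A\to M_1$ is injective, whence $fadim(M)\ge 1$, and by (3) the number of indecomposable complements equals $fadim(M)+1\ge 2$.

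\emph{Part (3).} Here the plan is to invoke the theory of complements to almost complete cotilting modules (equivalently, via $D(-)$, of almost complete tilting modules; the relevant input is in \cite{BS}, building on earlier work of Rickard--Schofield, Coelho--Happel--Unger and Happel--Unger). One shows that the indecomposable complements of $M$ form a finite chain $X_0,X_1,\dots,X_r$, with consecutive terms linked by exchange sequences $0\to X_{i+1}\to M^{(i)}\to X_i\to 0$ in which $M^{(i)}\in\add M$ is a minimal right $\add M$-approximation of $X_i$, the chain stopping at either end once the relevant approximation already lies in $\add M$, and that no complement is missed. The crux — the step I expect to be the main obstacle — is to show $r=fadim(M)$: splicing the exchange sequences of the chain, together with the $\add M$-coresolution of $D(A)$ provided by the cotilting module $M\oplus X$, yields an exact $\add M$-approximation sequence of exactly the shape that computes the faithful dimension of $M$, and conversely truncating the exact approximation complex $\eta_n$ recovers the $n+1$ complements. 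Making this correspondence precise — in particular matching up minimality on the two sides so that the count comes out to exactly $fadim(M)+1$ — is where the full (co)tilting machinery of \cite{BS} is required.
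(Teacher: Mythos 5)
The paper does not prove this theorem at all: its ``proof'' consists of four pointers into \cite{BS} (Propositions 2.1 and 2.2 and Theorems 3.1 and 3.6), so any genuine argument you give is automatically a different route. Your treatment of (1) and (2) is correct and is essentially the argument underlying the cited results: applying $\Hom_A(-,M)$ to the approximation complex $\eta$ yields a projective resolution of ${}_CM$ (exactness there uses only the approximation property of $K_i\to M_{i+1}$, not exactness of $\eta$), and applying $\Hom_C(-,M)$ and biduality on $\add M$ identifies $\Ext_C^i(M,M)$ with the failure of exactness of $\eta$ at $M_{i+1}$, while $\End_C(M)\cong\ker(f_2)$ receives $A$ via $f_1$, giving (1). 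Your deduction of (4) from (3) is also valid (faithfulness forces $f_1$ injective, hence $fadim(M)\geq 1$ and at least $1+1$ complements), although in \cite{BS} the logical order is reversed: Theorem 3.1 (your (4)) is proved first and independently, and the exact count in Theorem 3.6 (your (3)) comes later. The one part where you do not give a proof is (3), the correspondence between the chain of complements linked by exchange sequences and the faithful dimension; you correctly identify this as the hard step and defer it to the relative cotilting machinery of \cite{BS}, which is exactly what the paper itself does for all four statements. In short: your proposal proves strictly more than the paper does, and where it stops short it stops at the same external reference.
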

\begin{proof}
\begin{enumerate}
\item See proposition 2.1. of \cite{BS}.
\item See proposition 2.2. of \cite{BS}.
\item See theorem 3.6. of \cite{BS}.
\item See theorem 3.1. of \cite{BS}.
\end{enumerate}
\end{proof}
The next proposition gives a nice characterisation of NGAS algebras using such shifted tilting modules and coshifted cotilting modules. 
\begin{proposition} \label{propocotilting}
Let $A$ be an algebra with finite dominant dimension $n \geq 1$.
The the following are equivalent:
\begin{enumerate}
\item $A$ is a NGAS algebra.
\item $A$ has exactly $n+1$ basic cotilting modules.
\item $A$ has exactly $n+1$ basic tilting modules.
\end{enumerate}
All basic cotilting modules are in this case isomorphic to the basic version of $\Omega^{i}(D(A)) \oplus eA$ and all basic tilting modules are in this case isomorphic to the basic version of $\Omega^{-i}(A) \oplus eA$ for some $0 \leq i \leq n$.
\end{proposition}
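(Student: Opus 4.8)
The plan is to run everything through the minimal faithful projective--injective module $eA$, which exists since $\operatorname{domdim}(A)=n\geq 1$, using the Buan--Solberg machinery of \ref{BStheorem} and the shifted/coshifted (co)tilting modules of \cite{PS}. I would start from three standing facts. Since $A$ has positive dominant dimension, $\operatorname{add}(eA)$ is the additive closure of the indecomposable projective--injective modules, so $\delta(eA)=\delta(A)-m$, where $m$ is the number of indecomposable projective non-injective $A$-modules; as $\operatorname{domdim}(A)$ is finite, $A$ is not selfinjective, so $A$ is a NGAS algebra exactly when $m=1$, i.e. exactly when $eA$ is an almost complete cotilting module (it is always a summand of the cotilting module $D(A)$). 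Secondly, every projective--injective module is a direct summand of every cotilting module and of every tilting module: a projective module lies in the cotilting class of a cotilting module $T$ and hence has a finite right $\operatorname{add}(T)$-coresolution, and being injective it splits off the first term; dually for tilting modules. Hence every basic cotilting module, and every basic tilting module, has the form $eA\oplus X$. Thirdly, the number of simple modules and the number of indecomposable projective--injective modules are left--right symmetric, hence so is $m$, so $A$ is a NGAS algebra iff $A^{op}$ is, with $\operatorname{domdim}(A^{op})=n$.

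The technical core is the identity $\operatorname{fadim}(eA)=\operatorname{domdim}(A)=n$. I would unwind the complex $0\to A\xrightarrow{f_1}M_1\to M_2\to\cdots$ defining $\operatorname{fadim}(eA)$, in which each $K_i\to M_{i+1}$ is a minimal left $\operatorname{add}(eA)$-approximation. Since every summand of $eA$ is injective, for a module $Y$ whose injective envelope lies in $\operatorname{add}(eA)$ this minimal left approximation is just the injective envelope $Y\hookrightarrow I(Y)$, whereas if $I(Y)\notin\operatorname{add}(eA)$ then $Y$ embeds in no module of $\operatorname{add}(eA)$ and the approximation is not a monomorphism. Comparing with the minimal injective coresolution $0\to A\to I^0\to I^1\to\cdots$ of the regular module, and using that $I^0,\dots,I^{n-1}$ are projective (hence in $\operatorname{add}(eA)$) while $I^n$ is not, one gets $M_i=I^{i-1}$ and $K_i=\Omega^{-i}(A)$ for $1\leq i\leq n$, with all connecting maps $K_i\to M_{i+1}$ monomorphisms for $i\leq n-1$; thus $\eta_n$ is exact, while $K_n=\Omega^{-n}(A)$ has injective envelope $I^n\notin\operatorname{add}(eA)$, so $K_n\to M_{n+1}$ is not a monomorphism and $\eta_{n+1}$ is not exact. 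Hence $\operatorname{fadim}(eA)=n$.

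For $(1)\Rightarrow(2)$: if $A$ is a NGAS algebra then $eA$ is a faithful almost complete cotilting module, so by \ref{BStheorem}(3) it has exactly $\operatorname{fadim}(eA)+1=n+1$ indecomposable complements; since every basic cotilting module is $eA$ together with such a complement, there are exactly $n+1$ of them. For $(1)\Rightarrow(3)$: $A^{op}$ is a NGAS algebra with dominant dimension $n$, so it has $n+1$ basic cotilting modules, and applying $D(-)$ gives $n+1$ basic tilting $A$-modules. For the converse, suppose $A$ is not a NGAS algebra; then $m\geq 2$ and $\delta(eA)\leq\delta(A)-2$, so $eA$ is far from complete as a partial cotilting module. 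Besides the at most $n+1$ coshifted cotilting modules (the basic versions of $\Omega^i(D(A))\oplus eA$, $0\leq i\leq n$), shifting the non-projective injective summands $J_1,\dots,J_m$ of $D(A)$ by unequal amounts produces further basic cotilting modules, so $A$ has strictly more than $n+1$ of them; dually for tilting modules. Hence $(2)\Rightarrow(1)$ and $(3)\Rightarrow(1)$.

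For the final assertion, assume $(1)$. The $n+1$ coshifted cotilting modules are cotilting by \cite{PS} and, using $\operatorname{domdim}(A)=n$, pairwise non-isomorphic, so by the count just established they are all the basic cotilting modules; dually the $n+1$ shifted tilting modules $\Omega^{-i}(A)\oplus eA$ ($0\leq i\leq n$) exhaust the basic tilting modules. The main obstacle in this plan is the converse step: one must verify that for a non-NGAS algebra the ``mixed'' shifts of the injective summands of $D(A)$ genuinely yield cotilting modules and yield strictly more than $n+1$ isomorphism classes, together with checking the precise behaviour of the $\operatorname{add}(M)$-syzygies entering the modules of \cite{PS}. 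The forward implications and the faithful-dimension computation are, by contrast, essentially bookkeeping once \ref{BStheorem} and the facts above are in place.
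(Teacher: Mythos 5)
Your forward directions are sound and essentially follow the paper's route: you identify $eA$ as a faithful almost complete cotilting module when there is exactly one indecomposable projective non-injective module, compute $fadim(eA)=domdim(A)=n$ (which the paper asserts without detail; your computation via injective envelopes being the minimal left $add(eA)$-approximations is correct), apply \ref{BStheorem}(3) together with the fact that every projective-injective module is a summand of every (co)tilting module, and handle the tilting statement by the opposite algebra. The final identification of all (co)tilting modules with the (co)shifted ones is also fine.

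The genuine gap is exactly where you flag it: the implications $(2)\Rightarrow(1)$ and $(3)\Rightarrow(1)$. Your proposed mechanism -- ``shifting the non-projective injective summands $J_1,\dots,J_m$ of $D(A)$ by unequal amounts produces further basic cotilting modules'' -- is not justified and does not obviously work: a mixed module such as $\Omega^{1}(J_1)\oplus J_2\oplus\cdots\oplus J_m\oplus eA$ need not be rigid (there is no reason for $Ext^{1}(J_2,\Omega^{1}(J_1))$ to vanish), so it need not be cotilting, and no count of isomorphism classes is established. The paper closes this step differently, and this is the idea you are missing: if $A$ is not a NGAS algebra, pick an indecomposable injective non-projective summand $I_1$ of $D(A)$ and set $M:=D(A)/I_1$. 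Then $M$ is faithful (it contains $eA$) and is an almost complete cotilting module, so \ref{BStheorem}(4) provides a complement $X\not\cong I_1$, giving a cotilting module $T=M\oplus X$. One then shows $T$ is not isomorphic to any coshifted cotilting module by counting indecomposable injective summands: $T\not\cong D(A)$ has $\delta(A)-1$ of them (minimal cosyzygies contain no injective summands), whereas every non-injective coshifted cotilting module has exactly the $s\le\delta(A)-2$ injective summands coming from $eA$. Together with the $n+1$ pairwise non-isomorphic coshifted cotilting modules this forces strictly more than $n+1$ basic cotilting modules, contradicting $(2)$; the tilting case is dual. Without an argument of this kind (or a verified replacement for your ``mixed shift'' construction), your proof establishes only $(1)\Rightarrow(2),(3)$ and the final description, not the equivalences.
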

\begin{proof}
We show the equivalence of (1) and (2). The equivalence of (1) and (3) is shown similar.
Assume first that $A$ has exactly one indecomposable projective non-injective module and let $eA$ be the minimal faithful projective-injective $A$-module. By the definition of the faithful dimension, the faithful dimension of $eA$ is equal to the dominant dimension of $A$. Since $eA$ is projective-injective, every cotilting module has $eA$ as a direct summand and we have $\delta(eA)=\delta(A)-1$ by assumption on $A$.
Then it follows from (3) of \ref{BStheorem} that $eA$ has exactly $n+1$ complements.  Now the coshifted cotilting modules $\Omega^{i}(D(A)) \oplus eA$ are $n+1$ non-isomorphic cotilting modules and thus the proposition follows.

Assume now that $A$ has exactly $n+1$ cotilting modules. Since $A$ has $n+1$ coshifted cotilting modules, this means that every cotilting module must be a coshifted cotilting module. We show that this forces $A$ to have exactly one indecomposable projective non-injective module. 
Assume otherwise, that is $A$ has more than one indecomposable projective non-injective module. Since $A$ is assumed to have positive dominant dimension, there exists a minimal faithful projective-injective module $eA$. Then $A$ also has two indecomposable injective non-projective modules. Let $I_1$ and $I_2$ be two non-isomorphic indecomposable projective non-injective modules. Let $M:=D(A)/I_1$ and note that $M$ has $eA$ as a direct summand. Thus $M$ is faithful and an almost complete cotilting module since $M \oplus I_1=D(A)$ is a cotilting module.
By \ref{BStheorem} (4), there exists at least one indecomposable module $X$ that is not isomorphic to $I_1$ such that the module $T:=M \oplus X$ is a cotilting module.
We show that $T$ can not be a coshifted cotilting module.
Let $\delta(A)=r$. Since we assume that $A$ is not a NASG algebra, there are $s$ indecomposable projective-injective module with $s \leq r-2$.
Just note that $T$ is not isomorphic to $D(A)$ and that $T$ has $r-1$ indecomposable injective direct sumannds while every coshifted cotilting module that is not injective has exactly $s \leq r-2$ indecomposable injective direct summands. Thus $T$ is a colting module that is not isomorphic to any of the coshifted cotilting modules and $A$ has therefore more than $n+1$ cotilting modules. This is a contradiction and thus $A$ has to be a NASG algebra.

\end{proof}
We define the \emph{finitistic injective dimension} of an algebra as the supremum of injective dimensions of modules having finite injective dimension. Note that in general the finitistic dimension does not coincide with the finitistic injective dimension. We call an algebra a \emph{weak finitistic Auslander algebra} in case it has positive dominant dimension equal to the finitistic dimension. We call an algebra a \emph{weak co-finitistic Auslander algebra} in case the dominant dimension equals the finitistic injective dimension of the algebra (note that the definition is really dual, since the dominant dimension of an algebra always coincides with the codominant dimension of an algebra).
We do not know whether every weak finitistic Auslander algebra is a weak co-finitistic Auslander algebra. In fact the author is not even aware of an algebra that has dominant dimension at least one such that the finitistic dimension is not equal to the finitistic injective dimension.
We call an algebra \emph{tilting-finitistic} in the following in case the finitistic dimension of $A$ equals the supremum of projective dimension of tilting modules. We can apply \ref{propocotilting} to obtain the following theorem.

\begin{theorem} \label{nasgtheo}
Let $A$ be a NASG algebra with finite finitistic dimension. Then $A$ is a weak finitistic Auslander algebra in case $A$ is tilting finitistic.

\end{theorem}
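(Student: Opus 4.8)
The plan is to combine the classification of tilting modules over a NASG algebra obtained in Proposition \ref{propocotilting} with the elementary observation that a cosyzygy $\Omega^{-i}(A)$ of the regular module appearing in its minimal injective coresolution has projective dimension at most $i$ when $i$ does not exceed the dominant dimension, and then to close the resulting gap with Lemma \ref{findimlemma}. Concretely, I would first reduce the supremum of projective dimensions of tilting modules to a finite maximum over the explicitly known shifted tilting modules, bound each of these by the dominant dimension, and finally use that $findim(A) \ge domdim(A)$ always.

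First I would set $n := domdim(A)$, which is finite since $findim(A)$ is finite and $findim(A) \ge domdim(A)$ by Lemma \ref{findimlemma}. Because $A$ is a NASG algebra of finite dominant dimension $n \ge 1$, Proposition \ref{propocotilting} tells us that, up to isomorphism, the only basic tilting $A$-modules are the basic versions of $T_i := \Omega^{-i}(A) \oplus eA$ for $0 \le i \le n$, where $eA$ is the minimal faithful projective-injective module. Since passing to the basic version of a module does not change its projective dimension, we get $\sup\{ pd(T) \mid T \text{ tilting} \} = \max_{0 \le i \le n} pd(\Omega^{-i}(A) \oplus eA)$.

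Next I would bound each term. By the definition of $domdim(A) = n$, the first $n$ terms $I_0,\dots,I_{n-1}$ of a minimal injective coresolution $0 \to A \to I_0 \to I_1 \to \cdots$ of the regular module are projective. For $0 \le i \le n$, truncating this coresolution gives an exact sequence $0 \to A \to I_0 \to \cdots \to I_{i-1} \to \Omega^{-i}(A) \to 0$ all of whose terms other than $\Omega^{-i}(A)$ are projective; read from right to left it is a projective resolution of $\Omega^{-i}(A)$ of length at most $i$, so $pd(\Omega^{-i}(A)) \le i \le n$, and, since $eA$ is projective, $pd(T_i) \le n$ for every $i$. Hence $\sup\{ pd(T) \mid T \text{ tilting} \} \le n$. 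Using the hypothesis that $A$ is tilting-finitistic, $findim(A) = \sup\{ pd(T) \mid T \text{ tilting} \} \le n = domdim(A)$, while $findim(A) \ge domdim(A)$ by Lemma \ref{findimlemma}; therefore $findim(A) = domdim(A) = n \ge 1$, which is exactly the assertion that $A$ is a weak finitistic Auslander algebra. I do not expect a genuine obstacle here, as the whole argument is a short deduction from Proposition \ref{propocotilting}; the only mild care required is in noting that finiteness of the finitistic dimension forces $n$ to be finite and that a tilting module and its basic version share the same projective dimension.
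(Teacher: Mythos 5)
Your proof is correct and follows essentially the same route as the paper: reduce via tilting-finiteness and Proposition \ref{propocotilting} to the shifted tilting modules $eA \oplus \Omega^{-i}(A)$, bound their projective dimensions by the dominant dimension, and conclude with $findim(A) \geq domdim(A)$. The paper compresses this into one sentence by asserting directly that $eA \oplus \Omega^{-d}(A)$ has projective dimension exactly $d$; your version, which only needs the upper bound $pd(\Omega^{-i}(A)) \leq i$ together with Lemma \ref{findimlemma}, is the same argument written out with the details supplied.
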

\begin{proof}
Since $A$ is assumed to be tilting finitistic and a NASG algebra, the finitistic dimension of $A$ equals the supremum of the shifted tilting modules which is equal to $d$ when $d$ denotes the dominant dimension of $A$ because the module $eA \oplus \Omega^{-d}(A)$ is in this case the tilting module of highest projective dimension. 
\end{proof}
This motivates the following question:
\begin{question}
Is every finite dimensional algebra tilting-finitistic?
\end{question}
This question is for example studied in \cite{AT} and it seems to be open in general.
We now obtain several corollaries.
\begin{corollary} \label{nasgcorollary}
Let $A$ be a NASG algebra. Then $A$ is a finitistic Auslander algebra in each of the following cases:
\begin{enumerate}
\item The full subcategory of modules having finite projective dimension is contravariantly finite.
\item $A$ is representation-finite.
\item $A$ is Gorenstein.
\end{enumerate}
\end{corollary}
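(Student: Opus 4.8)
The plan is to derive all three cases from Theorem~\ref{nasgtheo}: for a NASG algebra $A$ that theorem says that if $findim(A)$ is finite and $A$ is tilting-finitistic, then $A$ is a weak finitistic Auslander algebra. So it suffices to check, in each of the three cases, that $findim(A)<\infty$ and that the supremum of the projective dimensions of the basic tilting $A$-modules equals $findim(A)$.

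I would treat case (3) first, since it is the cleanest. If $A$ is Gorenstein, then $findim(A)=Gdim(A)<\infty$ by the fact recalled in the introduction. Now consider the minimal injective coresolution $0\to A\to I^0\to\cdots\to I^{Gdim(A)}\to 0$ of the regular module. This is a finite $\add(D(A))$-coresolution of $A$; the self-extension groups $Ext^i_A(D(A),D(A))$ vanish for all $i\geq 1$ because $D(A)$ is injective; and $pd(D(A))=Gdim(A)<\infty$ by the very definition of Gorensteinness. Hence $D(A)$ is a tilting module of projective dimension exactly $findim(A)$. Since every tilting module has projective dimension at most $findim(A)$, the supremum in question equals $findim(A)$, so $A$ is tilting-finitistic and Theorem~\ref{nasgtheo} applies.

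For case (1), suppose $P^{<\infty}(A)$ is contravariantly finite; then $findim(A)<\infty$ (this is well known and was recalled in the text, for instance via Proposition~\ref{ARpropo}), and by the Auslander--Reiten theory of contravariantly finite subcategories \cite{AR} there is a basic tilting module $T$ such that $P^{<\infty}(A)$ consists exactly of the modules admitting a finite $\add(T)$-coresolution and $pd(T)=findim(A)$. A short dimension shift along such a coresolution shows that $T$ has maximal projective dimension inside $P^{<\infty}(A)$, so once more the supremum of the projective dimensions of tilting modules equals $findim(A)$ and $A$ is tilting-finitistic; Theorem~\ref{nasgtheo} gives the conclusion. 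Case (2) is then immediate: a representation-finite algebra has every subcategory of $mod-A$ contravariantly finite, in particular $P^{<\infty}(A)$, so case (1) applies.

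The step that I expect to need the most care is case (1): one must use that the tilting module supplied by the Auslander--Reiten machinery actually \emph{attains} projective dimension $findim(A)$, not merely that $pd(T)\leq findim(A)$, which is exactly what is needed for tilting-finiteness. If one prefers to sidestep the structure theorem for $P^{<\infty}(A)$, there is an alternative: by Proposition~\ref{propocotilting} the basic tilting modules of a NASG algebra of dominant dimension $d$ are precisely the shifted tilting modules $\Omega^{-i}(A)\oplus eA$ with $0\leq i\leq d$, all of projective dimension at most $d$; if $P^{<\infty}(A)$ is contravariantly finite it equals the category of modules with a finite $\add(T)$-coresolution for one such $T$, so $findim(A)=pd(T)\leq d$, and combined with $findim(A)\geq d$ from Lemma~\ref{findimlemma} this forces $findim(A)=d$ directly.
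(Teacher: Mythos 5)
Your proof is correct and follows essentially the same route as the paper: in each case you verify that $A$ is tilting-finitistic and then invoke Theorem~\ref{nasgtheo}, with $D(A)$ serving as the tilting module of projective dimension $findim(A)$ in the Gorenstein case, and with the structure theorem for a contravariantly finite $P^{<\infty}(A)$ supplying the tilting module in case (1) (the paper cites \cite{AT}, Theorem 2.6, where you cite the underlying Auslander--Reiten result from \cite{AR}; these are the same fact). Your closing alternative via Proposition~\ref{propocotilting}, which pins down $findim(A)=domdim(A)$ directly from the classification of tilting modules over a NASG algebra, is a pleasant extra observation but not a genuinely different method.
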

\begin{proof}
We show that $A$ is tilting-finitistic in each case:
\begin{enumerate}
\item This follows from \cite{AT}, theorem 2.6.
\item This follows from (1), since every subcategory of the module category of a representation-finite algebra is contravariantly finite.
\item Recall that for Gorenstein algebras, the finitistic dimension equals the Gorenstein dimension. Then the module $D(A)$ is a tilting module of projective dimension equal to the Gorenstein dimension.
\end{enumerate}

\end{proof}
The large class of examples motivate the following conjecture:
\begin{conjecture} \label{nasgconjecture}
Let $A$ be a NASG algebra. 
Then $A$ is tilting finitistic and thus a weak finitistic Auslander algebra 
\end{conjecture}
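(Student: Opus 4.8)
The plan is to reduce the conjecture to one homological statement and then to attempt that statement by the method of \ref{Ricktheo}.

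\noindent\textbf{Step 1 (reduction).} Suppose $domdim(A)=n$ is finite. By \ref{propocotilting} every basic tilting $A$-module is the basic version of $\Omega^{-i}(A)\oplus eA$ for some $0\le i\le n$, where $eA$ is the minimal faithful projective-injective module. Since $eA$ is projective and the first $n$ terms of the minimal injective coresolution of $A$ are projective, the exact sequence $0\to A\to I_0\to\cdots\to I_{i-1}\to\Omega^{-i}(A)\to 0$ shows $pd(\Omega^{-i}(A))\le i$, with equality for $1\le i\le n$: a Schanuel comparison identifies the (minimally taken) syzygy $\Omega^{i-1}(\Omega^{-i}(A))$ with the non-projective part of $\Omega^{-1}(A)$, which is nonzero because $A$, being non-selfinjective, is not an injective module. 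Hence the supremum of projective dimensions of tilting modules is exactly $n$, and as $findim(A)\ge domdim(A)$ always by \ref{findimlemma}, $A$ is tilting finitistic if and only if $findim(A)=domdim(A)$, i.e. if and only if $A$ is a weak finitistic Auslander algebra. If instead $domdim(A)=\infty$, the same syzygy computation gives $pd(\Omega^{-i}(A))=i$ for every $i\ge 0$, so both sides are infinite and the statement is trivial. It therefore remains to prove: every NASG algebra $A$ satisfies $findim(A)=domdim(A)$; equivalently, by \ref{findimlemma} and \ref{marvil}, $Dom_n(A)\cap P^{<\infty}(A)=Proj(A)$.

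\noindent\textbf{Step 2 (the attack).} Assume $n=domdim(A)\ge 2$ (the case $n=1$, where $A$ is not an endomorphism ring of a generator-cogenerator, would need a separate, more elementary argument). Write $A\cong End_B(M)$ with $M$ a basic generator-cogenerator of $mod$-$B$ and $B=eAe$. Counting indecomposable summands via \ref{ARSmaintheorem}(1) shows $M=B\oplus M''$ for a single indecomposable non-projective $B$-module $M''$, and moreover that $M''$ is the unique indecomposable injective non-projective $B$-module, that $B$ has a unique indecomposable projective non-injective module $P_0$, and that $Hom_B(M,P_0)$ is the unique indecomposable projective non-injective $A$-module. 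By \ref{maintheorem}(4) (with $M^{\perp n-2}=(M'')^{\perp n-2}$ since $B$ is projective) it suffices to show that no $X\in (M'')^{\perp n-2}\setminus add(M)$ has finite $add(M)$-resolution dimension. If one did, I would pass along its minimal $add(M)$-resolution to the last module $X'$ still lying outside $add(M)$, getting $X'\in (M'')^{\perp n-2}\setminus add(M)$ of $add(M)$-resolution dimension exactly one and an exact sequence $0\to\Omega_M^1(X')\to M_0\to X'\to 0$ with $M_0,\Omega_M^1(X')\in add(M)$. Right-minimality of $M_0\to X'$ forbids injective summands in $\Omega_M^1(X')$, and since $P_0$ is the only non-injective indecomposable summand of $M$, this forces $\Omega_M^1(X')\cong P_0^{\,a}$ for some $a\ge 1$. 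Then, exactly as in \ref{Ricktheo}, I would apply $Hom_B(M'',-)$ and use that $M''$ is $(n-2)$-rigid (by Mueller's theorem, since $domdim(A)=n$) together with $Ext_B^i(M'',X')=0$ for $1\le i\le n-2$ to obtain an injection $Ext_B^{n-1}(M'',P_0^{\,a})\hookrightarrow Ext_B^{n-1}(M'',M_0)$; the goal is to feed the underlying radical map into the Rickard lemma and conclude $Ext_B^{n-1}(M'',M'')=0$, contradicting $domdim(A)=n$.

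\noindent\textbf{Step 3 (the main obstacle).} The hard part is the very last move. In \ref{Ricktheo} the argument closes because $B$ is selfinjective, so $Ext_B^i(M'',B)=0$ for $i\ge 1$ and the long exact sequence involves only $Ext_B^\bullet(M'',M'')$ together with a radical map between sums of copies of the single indecomposable $M''$, to which the Rickard lemma applies verbatim. For a general NASG algebra $B$ is not selfinjective, the groups $Ext_B^i(M'',P_0)$ and $Ext_B^i(M'',B)$ need not vanish, and the radical map produced runs between sums of copies of $P_0$ and of $B\oplus M''$, so the Rickard lemma does not apply as stated; controlling these extension groups is precisely where a genuinely new idea is required. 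This obstruction is not accidental: the reduced statement ``$findim(A)=domdim(A)$ for every NASG algebra'' is the finitistic dimension conjecture restricted to this class, and by the Morita-Tachikawa correspondence together with \ref{Ricktheo} it already contains the open assertion that $Ext_C^1(N,N)\ne 0$ for every non-projective module $N$ over a local selfinjective algebra $C$. A clean alternative would be to prove instead that $P^{<\infty}(A)$ is contravariantly finite for every NASG algebra, after which \ref{nasgcorollary}(1) finishes via \cite{AT}; but this is itself open and, by \ref{ARpropo}, would in particular settle the finitistic dimension conjecture for all NASG algebras. So the realistic content of a proof is Steps 1 and 2, with the conjecture ultimately resting on one of these two equivalent homological inputs.
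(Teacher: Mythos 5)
Your proposal is not, and does not claim to be, a proof --- and neither does the paper contain one: the statement you were given is stated as an open conjecture (\ref{nasgconjecture}), and the paper only establishes the implication ``tilting finitistic $\Rightarrow$ weak finitistic Auslander'' for NASG algebras (\ref{nasgtheo}) together with the special cases in \ref{nasgcorollary} ($P^{<\infty}(A)$ contravariantly finite, representation-finite, Gorenstein). Your Step 1 is essentially a correct re-derivation of \ref{propocotilting} plus \ref{nasgtheo}, sharpened by the easy converse (the shifted tilting module $eA\oplus\Omega^{-n}(A)$ has projective dimension exactly $n$, since $\Omega^{-1}(A)$ projective would force $A$ to be selfinjective), so the conjecture is indeed equivalent to $findim(A)=domdim(A)$ for NASG algebras. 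Step 2 is a reasonable further reduction via Morita--Tachikawa and \ref{maintheorem}(4), though you should separate the two structural cases cleanly: either $B=eAe$ is selfinjective and $M=B\oplus M''$ with $M''$ indecomposable non-projective, or $B$ has exactly one indecomposable projective non-injective module $P_0$ and $M$ is the basic version of $B\oplus D(B)$; your wording asserts the existence of both $P_0$ and an injective $M''$ simultaneously, which only describes the second case. You also leave the case $domdim(A)=1$ untreated, where \ref{maintheorem} is not available.

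The genuine gap is exactly the one you name in Step 3: the Rickard lemma and the argument of \ref{Ricktheo} use in an essential way that the base algebra is local selfinjective, so that the kernel of the approximation is a sum of copies of the single indecomposable $M''$ and all relevant $Ext$-groups against $B$ vanish; for a general NASG algebra the kernel involves $P_0$, the groups $Ext_B^i(M'',P_0)$ and $Ext_B^i(M'',B)$ do not vanish, and no substitute argument is supplied. Since the reduced statement contains, already in the case of a local selfinjective $B$ with a self-orthogonal indecomposable $M''$, assertions the paper itself only proves for Hopf algebras (\ref{tachtheo}) and leaves conjectural in general, Steps 1 and 2 cannot be completed by the stated method. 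In short: your reduction is sound and consistent with the partial results the paper proves, but the conjecture remains open both in your write-up and in the paper, so this should be presented as a reduction and a discussion of the obstruction, not as a proof.
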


\begin{corollary}
Let $A$ be a representation-finite NASG algebra. Then $A$ is a weak finitistic Auslander algebra and a weak finitistic co-Auslander algebra.
\end{corollary}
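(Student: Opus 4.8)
The plan is to deduce the first assertion directly from the earlier results on NASG algebras, and to obtain the second assertion by passing to the opposite algebra.

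For the first assertion, recall that since $A$ is representation-finite every subcategory of $mod-A$ is contravariantly finite; in particular $P^{<\infty}(A)$ is contravariantly finite, so $A$ has finite finitistic dimension and, by \cite{AT} Theorem~2.6 (exactly as used in the proof of \ref{nasgcorollary}), $A$ is tilting-finitistic. As $A$ is a NASG algebra with finite finitistic dimension, Theorem~\ref{nasgtheo} then gives that $A$ is a weak finitistic Auslander algebra. (Equivalently, one may simply invoke \ref{nasgcorollary}(2).)

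For the second assertion I would first check that $A^{op}$ is again a representation-finite NASG algebra. Representation-finiteness and non-selfinjectivity are left--right symmetric, and $\mathrm{domdim}(A^{op}) = \mathrm{domdim}(A) \geq 1$ as recalled in the preliminaries. To see that $A^{op}$ also has at most one indecomposable projective non-injective module, write $A$ in basic form with $n$ simple modules and $k$ indecomposable projective-injective modules, so that $A$ has $n-k$ indecomposable projective non-injective modules; the standard duality $D\colon mod-A \to mod-A^{op}$ carries indecomposable projective-injective $A$-modules bijectively onto indecomposable projective-injective $A^{op}$-modules, so $A^{op}$ likewise has $k$ of them and hence $n-k \leq 1$ indecomposable projective non-injective modules. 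Thus $A^{op}$ satisfies all the hypotheses, and applying the first assertion to $A^{op}$ yields $\mathrm{domdim}(A^{op}) = \mathrm{findim}(A^{op})$.

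Finally I would translate this back through $D$. Since $D$ is an exact duality, it sends a minimal projective resolution of an $A^{op}$-module $N$ to a minimal injective coresolution of the $A$-module $D(N)$, so $\mathrm{findim}(A^{op})$ equals the finitistic injective dimension of $A$; together with $\mathrm{domdim}(A^{op}) = \mathrm{domdim}(A)$ this says precisely that the dominant dimension of $A$ equals its finitistic injective dimension, i.e.\ $A$ is a weak co-finitistic Auslander algebra. The only step that requires any care is the bookkeeping transferring the NASG property to $A^{op}$; everything else is formal once the first assertion and the behaviour of $D$ under (co)resolutions are in hand.
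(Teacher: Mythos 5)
Your proposal is correct and follows essentially the same route as the paper: the paper's proof simply invokes \ref{nasgcorollary}, \ref{nasgtheo} and ``their duals'', and your passage to $A^{op}$ (checking that the NASG property, representation-finiteness and the dominant dimension transfer, and that $D$ turns $\mathrm{findim}(A^{op})$ into the finitistic injective dimension of $A$) is precisely what those dual statements amount to, just spelled out in detail.
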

\begin{proof}
This follows directly from \ref{nasgcorollary} and \ref{nasgtheo} and their duals.
\end{proof}

We also obtain a corollary for higher Auslander algebras and Auslander-Gorenstein algebras that seems to be new:
\begin{corollary}
Let $A$ be a NASG algebra of dominant dimension at least two.
\begin{enumerate}
\item $A$ is an Auslander-Gorenstein algebra if and only if $A$ is Gorenstein.
\item $A$ is a higher Auslander algebra if and only if $A$ has finite global dimension.
\end{enumerate}

\end{corollary}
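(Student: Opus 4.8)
The plan is to reduce the whole statement to the Gorenstein case of Corollary~\ref{nasgcorollary}, which is really the only nontrivial input here. First observe that the two ``only if'' implications are immediate from the definitions: if $A$ is an Auslander--Gorenstein algebra then by definition $Gdim(A)=domdim(A)<\infty$, so in particular $A$ is Gorenstein; and if $A$ is a higher Auslander algebra then $gldim(A)=domdim(A)<\infty$, so $A$ has finite global dimension. So all the content lies in the two converse implications, for which I would use that $A$ is assumed to be a NASG algebra of dominant dimension $\geq 2$.

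For the converse in (1), suppose $A$ is Gorenstein. Then Corollary~\ref{nasgcorollary}(3) applies and tells us that $A$ is a finitistic Auslander algebra, i.e.\ $findim(A)=domdim(A)$, a finite number which is $\geq 2$. On the other hand, for any Gorenstein algebra one has $findim(A)=Gdim(A)$ (as recalled in the preliminaries, following \cite{Che}). Combining these two equalities gives $Gdim(A)=domdim(A)<\infty$ with $domdim(A)\geq 2$, which is exactly the definition of an Auslander--Gorenstein algebra.

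For the converse in (2), suppose $gldim(A)<\infty$. Then every $A$-module has finite projective dimension, so $findim(A)=gldim(A)$; moreover finite global dimension forces $A$ to be Gorenstein, so Corollary~\ref{nasgcorollary}(3) again gives $findim(A)=domdim(A)$. Therefore $gldim(A)=domdim(A)<\infty$ with $domdim(A)\geq 2$, i.e.\ $A$ is a higher Auslander algebra. (Equivalently one could first invoke part (1) to conclude $A$ is Auslander--Gorenstein and then use $Gdim(A)=gldim(A)$ when the global dimension is finite.) I do not expect a genuine obstacle: once Corollary~\ref{nasgcorollary}(3) is available, the argument is just unwinding the definitions of ``Auslander--Gorenstein'' and ``higher Auslander'' against the equalities $findim=Gdim$ (Gorenstein case) and $findim=gldim$ (finite global dimension case). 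The only point that deserves a moment's care is to check that the standing hypothesis $domdim(A)\geq 2$ is exactly what lets the conclusions ``$Gdim(A)=domdim(A)\geq 2$'' and ``$gldim(A)=domdim(A)\geq 2$'' match the respective definitions, and that Corollary~\ref{nasgcorollary} is being applied under that hypothesis.
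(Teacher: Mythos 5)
Your proposal is correct and follows essentially the same route as the paper: both directions of each equivalence are handled by noting the ``only if'' parts are definitional and then deducing the converses from Corollary~\ref{nasgcorollary}(3) together with the identities $findim(A)=Gdim(A)$ for Gorenstein algebras and $findim(A)=gldim(A)$ when the global dimension is finite. Your extra remark about checking the standing hypothesis $domdim(A)\geq 2$ against the definitions is a sensible precision that the paper leaves implicit.
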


\begin{proof}
\begin{enumerate}
\item If $A$ is Auslander-Gorenstein, then $A$ is by definition also Gorenstein.
Assume that $A$ is Gorenstein and a NASG algebra. Then by \ref{nasgcorollary} (3), it is a finitistic Auslander algebra and thus an Auslander-Gorenstein algebra because it has finite Gorenstein dimension.
\item If $A$ is a higher Auslander algebra, then $A$ is has by definition finite global dimension.
Assume that $A$ has finite global dimension and is a NASG algebra. Then by \ref{nasgcorollary} (3) (recall that every algebra of finite global dimension is a Gorenstein algebra), it is a finitistic Auslander algebra and thus a higher Auslander algebra because it has finite global dimension.
\end{enumerate}
\end{proof}

We give a large class of concerete examples of representation-finite NASG algebras:
\begin{example}
Let $A$ be a Nakayama algebra. Then $A$ always has dominant dimension at least one and thus is a NASG algebra if an only if it has exactly one indecomposable projective non-injective module. This is the case if and only if $A$ has Kupisch series (see for example \cite{Mar2} for background on Nakayama algebras and their Kupisch series) equal to $[2,2,...,2,1]$ or $[a,a,...,a,a+1,a+1,...,a+1]$ for some natural number $a \geq 2$. Thus for any number $n \geq 2$ we get like this infinitely many representation-finite weak finitistic Auslander algebras having $n$ simple modules.
Proving directly from the definition of the finitistic dimension and dominant dimension that those algebras are weak finitistic Auslander algebras seems very hard and the author is not aware of a direct proof.
Computer experiments suggest that every non-selfinjective Nakayama algebra with exactly $n \geq 2$ simple modules and dominant dimension at least $n$ is automatically a NASG algebra and thus also a weak finitistic Auslander algebra.
This is proved with a computer for $n \leq 13$ and we will give more results on this problem in forthcoming work.

\end{example}

The previous example and all other example which the author considered lead to the following conjecture:
\begin{conjecture}
Let $n \geq 2$.
There exists a polynomial function $f(n)$ such that the following is true: \newline
Every connected non-selfinjective algebra with $n$ simple modules that has dominant dimension at least $f(n)$ is a finitistic Auslander algebra.
\end{conjecture}
We think that a polynomial function $f(n)$ with $n \leq f(n) \leq 2n$ might do the job. One can verify this conjecture for several classes of algebras, as we will do in forthcoming work. We give one more example.
\begin{example}
In \cite{ChMar} the class of representation-finite gendo-symmetric biserial algebras was classified (generalising the classical Brauer tree algebras). All those algebras were Gorenstein and thus their finitistic dimension coincides with the Gorenstein dimension. Explicit values for the dominant and Gorenstein dimension are obtained and one can easily show that the above conjecture is true for this class of algebras with $f(n)=n$.
\end{example}

\end{document}